\documentclass[reqno,11pt, letterpaper]{amsart}
\PassOptionsToPackage{fleqn}{amsmath}
\RequirePackage{amsthm}
\RequirePackage{amsmath}
\RequirePackage{latexsym}
\RequirePackage{amssymb}
\RequirePackage{amscd}
\RequirePackage{epsfig}
\RequirePackage{graphics}
\RequirePackage{ifthen}
\RequirePackage{varioref}
\usepackage{changes}
\usepackage{soul}
\usepackage{xcolor}
\usepackage{color}
\usepackage[misc]{ifsym}
\usepackage{bm}
\usepackage{lipsum}
\usepackage{subfigure}
\usepackage{enumitem}
\usepackage{changepage}
\usepackage{mathrsfs}

\usepackage{epstopdf}

\definecolor{darkgreen}{rgb}{0.0625,0.64,0.0625}
\ifpdf
  \usepackage[
    pdftex,
    colorlinks,%
    linkcolor=blue,citecolor=blue,urlcolor=blue,
    hyperindex,%
    plainpages=false,%
    bookmarksopen,%
    bookmarksnumbered%
  ]{hyperref}
\else
  \usepackage{hyperref}
\fi
\usepackage{bookmark}
\allowdisplaybreaks[1]

\theoremstyle{plain}
\newtheorem{thm}{Theorem}[section]
\newtheorem{lem}[thm]{Lemma}
\newtheorem{prop}[thm]{Proposition}
\newtheorem{cor}[thm]{Corollary}
\theoremstyle{definition}
\newtheorem{rem}[thm]{Remark}

\numberwithin{equation}{section}
\newcommand\blfootnote[1]{%
 \begingroup
 \renewcommand\thefootnote{}\footnote{#1}%
 \addtocounter{footnote}{-1}%
 \endgroup
 }

\begin{document}

\title[Centro-Equiaffine Area-Preserving Flow]
{A fourth-order area-preserving curve flow in centro-equiaffine geometry}

	\author[X. Jiang]{Xinjie Jiang ${}^*$}\blfootnote{${}^*$~Corresponding author: serge0912@icloud.com}
\address{Xinjie Jiang}
\email[Corresponding author]{serge0912@icloud.com}

\author[S. Pan]{Shengliang Pan}
\address{Shengliang Pan}
\email{slpan@tongji.edu.cn}

\address{
School of Mathematical Sciences, Key Laboratory of Intelligent Computing and Applications (Ministry of Education), Tongji University, No.1239, Siping Road, Shanghai, 200092, P. R. China}

 \author[Y. Zhang]{Yanlong Zhang}
\address{ Yanlong Zhang\newline\indent
	Institute of Mathematics, Henan Academic of Sciences, Zhengzhou 450046, P. R. China}
\email{ylzhang@hnas.ac.cn}

\begin{abstract}
	In this paper, inspired by Guan and Li~\cite{GL14}, we introduce a fourth-order centro-equiaffine invariant curve flow via the affine Minkowski formula. Without any smallness assumptions on the initial curve, we establish the long-time existence of the flow and prove that, as $t \to +\infty$, the evolving curve preserves its enclosed area and converges smoothly to a round circle up to the action of $\mathrm{SL}(2)$.
\end{abstract}

\subjclass[2020]{53A15, 53A55, 53E40, 35K52}

\keywords{centro-equiaffine geometry;\; invariant curve flow;\; higher-order curvature flow;\; asymptotic behavior}


\maketitle
\section{Introduction}
In this paper, we study a centro-equiaffine invariant constrained curve flow, which can be viewed as the natural planar affine analogue of the mean curvature type flow introduced by Guan and Li \cite{GL14}. 
\textit{Constrained curvature flows} constitute an important class of geometric evolutions, characterized by the preservation of certain global geometric quantities along the flow.
A classical example is the area-preserving curve shortening flow introduced by Gage \cite{Gag85}, where a nonlocal term is incorporated into the velocity to fix the enclosed area. 
Gage proved that this flow preserves the area and deforms any convex initial curve smoothly to a round circle. 
The idea of introducing appropriate nonlocal terms to control global quantities has since been widely generalized, see \cite{TW15,GPT20.1,GPT20.2,GPT21} and references therein.
In higher dimensions, the corresponding analogue is the volume-preserving mean curvature flow studied by Huisken \cite{Hui87}, who showed that strictly convex hypersurfaces exist for all time and converge smoothly to round spheres. 
Similar results in hyperbolic space were obtained under a suitable convexity assumption \cite{CM07}.

However, the presence of global terms typically introduces substantial analytical difficulties, and apart from certain perturbative results \cite{AF03}, general results in the spherical setting remain limited. 
Guan and Li \cite{GL14} observed that, by exploiting the Minkowski identities in space forms, one can define a locally constrained mean curvature type flow that involves no global term. This flow preserves the enclosed volume while monotonically decreasing the surface area. Compared with the classical volume-preserving mean curvature flow, this formulation requires significantly weaker assumptions on the initial hypersurface. They proved that, when the ambient manifold is a space form, the flow exists for all time and evolves star-shaped initial hypersurfaces smoothly to round spheres. This result was later extended to warped product spaces in joint work with Wang \cite{GLW19}.

Affine differential geometry is based on the Lie group $\mathrm{Aff}(n)=\mathrm{GL}(n)\ltimes \mathbb{R}^n$, consisting of affine transformations of the form  $\mathrm{x}\mapsto \mathcal{A}\mathrm{x}+b$, where $\mathcal{A}\in \mathrm{GL}(n)$ and $b\in \mathbb{R}^n$ (see \cite{NS94,Sim00} for details). 
Restricting to the subgroup $\mathrm{SA}(n)=\mathrm{SL}(n)\ltimes \mathbb{R}^n$ of volume-preserving affine transformations leads to equi-affine geometry. 
Centro-affine differential geometry concerns affine transformations fixing the origin and is closely related to the geometry induced by the general linear group $\mathrm{x}\mapsto \mathcal{A}\mathrm{x}$, $\mathcal{A}\in \mathrm{GL}(n)$. 
Centro-equiaffine differential geometry corresponds to the subgroup $\mathrm{SL}(n)$ of volume-preserving linear transformations.

Our analysis is carried out mainly within the framework of planar equi-affine differential geometry. Let $\gamma:S^1 \to \mathbb{R}^2$ be a strictly convex embedded curve parametrized by $x$. The equi-affine metric is defined by $g=[\gamma_x,\gamma_{xx}]^{\frac13}$, where $[\cdot,\cdot]$ denotes the standard determinant in $\mathbb{R}^2$. The corresponding equi-affine arc-length parameter is $s(x):=\int_0^x g(\eta)\, d\eta$. The affine support function is defined by
\begin{align*}
	\sigma := [\gamma, \gamma_s],
\end{align*}
and the equi-affine curvature is given by
\begin{align*}
	\mathcal{K} := [\gamma_{ss}, \gamma_{sss}].
\end{align*}
These quantities satisfy the classical relation
\begin{align}\label{M-relation}
	\sigma_{ss} + \sigma \mathcal{K} = 1.
\end{align}

Over the past three decades, substantial progress has been made on invariant geometric flows associated with various affine subgroups. 
In equi-affine geometry, Sapiro and Tannenbaum \cite{ST94} introduced the affine curve shortening flow, which serves as the equi-affine analogue of the classical curve shortening flow, and showed that a closed, convex embedded curve evolving under this flow shrinks to an elliptical point. 
Since this flow does not decrease the equi-affine length pointwise, Andrews \cite{And99} proposed an $L^2$-gradient flow of the equi-affine length, proving that the flow expands to infinity and asymptotically approaches a homothetically expanding ellipse. 
A higher-dimensional analogue of the affine curve shortening flow is the affine normal flow. Andrews \cite{And96} proved that, under this flow, strictly convex hypersurfaces contract to ellipsoidal points. In the context of centro-equiaffine geometry, Stancu \cite{Sta12} introduced an invariant $p$-flow to construct new global invariants of smooth convex bodies. 
Using the short-time existence of the flow, she derived a series of isoperimetric-type inequalities. 
Ivaki \cite{Iva14} further investigated the asymptotic behavior, showing that for origin-symmetric initial convex curves the flow shrinks to a point in finite time, while the normalized flow converges in the Hausdorff metric to the unit circle up to the action of $\mathrm{SL}(2)$. 
The case $p=\infty$, corresponding to the centro-equiaffine geometric heat flow, was studied in \cite{Iva16,WWQ17}, while the higher-dimensional asymptotic behavior of the $p$-flow was investigated in \cite{IS13,Iva15}. In centro-affine geometry, Qu and Yang \cite{QY22} proposed a second-order invariant curve flow, whose asymptotic behavior was analyzed by Yang, Yu, and the first author \cite{JYY23}. 
This work was subsequently extended to higher dimensions by the first author, Qu, and Yang \cite{JQY26}. 
Niu and Yang \cite{NY25} studied a third-order invariant curve flow preserving both the centro-affine length and the centro-affine elastic energy. 
In the general-affine setting, Yang \cite{Yan23} investigated an invariant curve flow from the perspective of geometric heat flow.

From \eqref{M-relation}, it follows that for a closed curve one has the following well-known affine Minkowski integral formula (cf.\ \cite{NS94})
\begin{align*}
	\oint\left(1-\sigma\mathcal{K}\right)ds=0.
\end{align*}
Motivated by this formula, we consider the following Guan--Li type centro-equiaffine invariant curve flow 
\begin{equation}\label{main-flow1}
	\left\{
	\begin{aligned}
		&\frac{\partial\gamma}{\partial t}=(1-\sigma\mathcal{K})\mathcal{N},\\ &\gamma(\cdot,0)=\gamma_0,
	\end{aligned}
	\right.
\end{equation}
where $\mathcal{N}$ is the equi-affine normal vector of the curve $\gamma$, and $\gamma_0$ is a smooth, closed, origin-symmetric, strictly convex curve in $\mathbb{R}^2$. 
Using \eqref{M-relation}, the flow \eqref{main-flow1} can be rewritten as
\begin{equation}\label{main-flow2}
	\left\{
	\begin{aligned}
	&\frac{\partial\gamma}{\partial t}=\sigma_{ss}\mathcal{N},\\ &\gamma(\cdot,0)=\gamma_0.
	\end{aligned}
	\right.
\end{equation}
\begin{rem}
	Note that the affine support function $\sigma$ is not translation invariant. 
	In fact, it is related to the centro-equiaffine curvature $\mathscr{K}$ by (see \cite{Iva14})
	\[
	\mathscr{K} = \sigma^{-3}.
	\]
	Hence, the flow \eqref{main-flow1} is invariant under $\mathrm{SL}(2)$ transformations.
\end{rem}
The flow considered here is a fourth-order curvature flow, for which the maximum principle is no longer available. 
As a consequence, such flows often present significant analytical obstacles and can be highly delicate to analyze.
A prototypical example is the surface diffusion flow, introduced by Mullins to model phase–interface dynamics \cite{Mul57}, which can be interpreted as the $H^{-1}$-gradient flow of the surface area. 
In the planar setting, in contrast to second-order flows such as the classical curve shortening flow, the surface diffusion flow does not preserve convexity \cite{GI99} or embeddedness \cite{GI98}, properties that are fundamental in the analysis of second-order curve flows.
Due to these difficulties, additional assumptions are often required to obtain asymptotic results. 
Wheeler \cite{Whe13} proved that if the initial closed curve is sufficiently close to a round circle, in the sense of the normalized $L^2$ oscillation of curvature, then the flow exists for all time and converges exponentially fast to a circle. 
More recently, Miura and Wheeler \cite{MW25} established a similar result for the free elastic flow: if the initial curve is sufficiently close to a circle in terms of the normalized $L^2$ norm of the derivative of curvature, then a suitable rescaling of the flow converges smoothly to a unique round circle. 
Furthermore, Andrews et al.\ \cite{AMWW20} showed that for the ideal flow, a uniform bound on the length implies smooth convergence to a multiply covered circle; in particular, such a bound follows from a smallness condition on the normalized $L^2$ norm of the curvature derivative.

In the present work, we study the flow without any smallness assumption on the initial data. 
Our main result is the following.
\begin{thm}\label{A}
	Let $\gamma_0: S^1 \to \mathbb{R}^2$ be a smooth strictly convex curve that is symmetric with respect to the origin. 
	Then the flow \eqref{main-flow1} exists for all time, and $\gamma(\cdot,t)$ converges smoothly, as $t\to+\infty$, to a round circle with the same area as $\gamma_0$, modulo $\mathrm{SL}(2)$.
\end{thm}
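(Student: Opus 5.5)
We plan to reduce the flow to a scalar parabolic equation for the affine support function $\sigma$ (equivalently for the centro-equiaffine curvature $\mathscr{K}=\sigma^{-3}$). We then exploit two monotone quantities: the enclosed area, which is conserved, and the equi-affine length, which is monotone. These give uniform two-sided bounds for $\sigma$, and higher regularity plus convergence follow.

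\textbf{Step 1: evolution equations.} We work in the equi-affine frame $\gamma_s,\mathcal{N}=\gamma_{ss}$ with normal velocity $f=\sigma_{ss}=1-\sigma\mathcal{K}$. Standard equi-affine variation formulas give the following.
\begin{itemize}
\item The metric evolves by $\partial_t g=g\bigl(\tfrac13 f_{ss}-\tfrac23\mathcal{K}f\bigr)$ (up to the normalization of the paper's conventions).
\item The area evolves by $\frac{dA}{dt}=-\oint [\gamma_s,\mathcal{N}]f\,ds=-\oint f\,ds=-\oint\sigma_{ss}\,ds=0$, so the area is preserved.
\item The equi-affine length evolves by $\frac{dL}{dt}=-\tfrac23\oint\mathcal{K}\sigma_{ss}\,ds$.
\end{itemize}
Using $\mathcal{K}=(1-\sigma_{ss})/\sigma$ from \eqref{M-relation}, this becomes
\[
\frac{dL}{dt}=\frac23\oint\frac{\sigma_{ss}^2}{\sigma}\,ds-\frac23\oint\frac{\sigma_{ss}}{\sigma}\,ds .
\]
Integrating the last term by parts gives $\frac{dL}{dt}=\tfrac23\oint\bigl(\sigma_{ss}^2/\sigma+\sigma_s^2/\sigma^2\bigr)ds\ge0$. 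Hence $L$ is nondecreasing, while the affine isoperimetric inequality $L^3\le 8\pi^2A$ bounds it above. This yields the integrability
\[
\int_0^\infty\oint\Bigl(\frac{\sigma_{ss}^2}{\sigma}+\frac{\sigma_s^2}{\sigma^2}\Bigr)ds\,dt<\infty .
\]

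We also derive the evolution of $\sigma$ itself. Writing it in the time-dependent arclength, it should take the form
\[
\sigma_t=\sigma_{ss}\,\bigl(\text{tangential terms}\bigr)-\tfrac13\sigma\,\sigma_{ssss}+\text{lower order}.
\]
This is a fourth-order quasilinear parabolic equation provided $\sigma>0$, that is, provided convexity and the condition that the origin lies in the interior hold.

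\textbf{Step 2: preservation of convexity and bounds on $\sigma$.} This is the main obstacle, since no maximum principle is available. The plan is to use origin symmetry, which is preserved by uniqueness and $\mathrm{SL}(2)$-equivariance, so the origin stays at the center. For an origin-symmetric convex curve, $\sigma=[\gamma,\gamma_s]$ is comparable to the product of the Euclidean support function and a power of the Euclidean curvature. The steps are:
\begin{enumerate}
\item Fixed area together with the lower bound $L\ge L(0)$ should prevent degeneration modulo $\mathrm{SL}(2)$. We normalize by John's position, choosing an $\mathrm{SL}(2)$ representative with inradius and circumradius comparable.
\item In this normalization, we get $\sigma\ge c>0$ from an interpolation argument. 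The integral bound $\oint\sigma_s^2/\sigma^2<\infty$ at almost every time, together with $\oint\sigma\,ds=2A$, controls $\log\sigma$ in $H^1$. In one dimension this gives two-sided bounds on $\sigma$ through Sobolev embedding, with constants depending only on $A$ and $L(0)$.
\item Since $\sigma>0$ with the curve closed, and $\sigma\mathcal{K}=1-\sigma_{ss}$ is bounded, strict convexity persists by a continuity argument.
\end{enumerate}
Getting these bounds uniformly in $t$, rather than only at almost every time, is the delicate point. I would first try to show that a second energy, $\oint\sigma_s^2/\sigma^2\,ds$ or $\oint\sigma_{ss}^2/\sigma\,ds$, is itself almost monotone, with derivative bounded by a constant times itself.

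\textbf{Step 3: higher regularity and long-time existence.} With $0<c\le\sigma\le C$ and the curve in a normalized position, the equation is uniformly parabolic of fourth order. We derive $L^2$ estimates for $\partial_s^k\sigma$ by integration by parts and Gagliardo--Nirenberg interpolation, in the style of Dziuk--Kuwert--Schätzle and Wheeler. This gives $\frac{d}{dt}\|\partial_s^k\sigma\|_2^2\le-c\|\partial_s^{k+2}\sigma\|_2^2+C$, hence uniform bounds on all derivatives. Combined with short-time existence for quasilinear fourth-order parabolic equations, this gives existence for all time.

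\textbf{Step 4: convergence.} The integrability from Step 1 and the uniform smooth bounds give a sequence $t_j\to\infty$ along which $\oint\sigma_s^2\,ds\to0$. Modulo $\mathrm{SL}(2)$, the limit curves therefore have $\sigma$ constant, so $\mathcal{K}$ is constant and positive: they are centered ellipses, i.e.\ circles modulo $\mathrm{SL}(2)$, with area equal to $A(\gamma_0)$. The uniform derivative bounds upgrade this to smooth convergence. For convergence of the full flow, we use the fact that $L$ is monotone with limit $(8\pi^2A)^{1/3}$, so every subsequential limit is such an ellipse; modulo $\mathrm{SL}(2)$ this limit is unique. Alternatively, a Łojasiewicz or linearization argument around the circle gives exponential convergence.
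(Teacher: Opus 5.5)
There is a genuine gap, and it is the one you flag yourself. Step 2 never produces a bound on $\sigma$ that is uniform in $t$, and everything downstream rests on it: uniform parabolicity, the higher-order estimates, long-time existence and convergence. A space--time integral bound controls $\oint\sigma_s^2\sigma^{-2}\,ds$ only on average in time, so the two-sided bounds ``with constants depending only on $A$ and $\mathcal{L}(0)$'' are unsupported. Your fallback, showing that some second energy is almost monotone, is precisely the missing argument. Normalizing by John's position does not help here, since $\sigma$ is $\mathrm{SL}(2)$-invariant. It is only useful afterwards, to turn bounds on $\sigma$ into bounds on $r=(\sigma/h)^3$, as the paper does at the end with a length-minimizing normalization.

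The paper closes the gap in three steps.
(i) A Sturm comparison argument for $\mu/\sigma$, with $\mu=[\mathcal{T},e]$, in the parameter $d\ell=\sigma\,ds$, shows that $\sigma$ attains the value $\pi^{-2/3}A_0^{2/3}$. Oscillation estimates for $\sigma^{1/2}$ (weighted by $\sigma\,ds$) and $\sigma^{-1}$ (weighted by $\sigma^{-2}ds$) then bound $\sigma$ above and below in terms of $\oint\sigma_s^2\sigma^{-2}\,ds$.
(ii) The identity $\oint\sigma_s^2\sigma^{-2}\,ds=\oint\sigma^{-1}ds-\oint\mathcal{K}\,ds$, together with Blaschke--Santal\'o and the affine isoperimetric inequality, reduces this to bounding $\mathscr{L}=1-\frac{A^{1/3}}{2\pi^{4/3}}\oint\mathcal{K}\,ds$.
(iii) $\mathscr{L}$ is bounded for all time by a continuity argument on $\mathscr{Q}=\mathscr{L}+B\mathscr{M}$, where $\mathscr{M}=1-\frac{A}{2\pi^2}\oint\sigma^{-2}ds$. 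The explicit evolution of $\oint\mathcal{K}\,ds$, Young's inequality, and the upper bound $\sigma\le C(\mathscr{L})$ from (i) give $\mathscr{Q}_t\le 0$ as long as $\mathscr{L}\le\mathscr{L}(0)+\frac{12}{11}B$.

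The monotone quantity behind all of this is $\oint\sigma^{-2}ds$. It satisfies $\frac{d}{dt}\oint\sigma^{-2}ds=6\oint\sigma^{-4}\sigma_s^2\,ds\ge 0$ and is bounded above by Blaschke--Santal\'o, because origin symmetry makes the origin the Santal\'o point. It is also what yields $\int_0^\infty E\,dt<\infty$ for the convergence step.

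Step 1 also contains a sign error, which invalidates the dissipation you use in Step 4. Integration by parts gives $\oint\sigma_{ss}\sigma^{-1}ds=+\oint\sigma_s^2\sigma^{-2}ds$, so
\[
\frac{d\mathcal{L}}{dt}=\frac23\oint\frac{\sigma_{ss}^2}{\sigma}\,ds-\frac23\oint\frac{\sigma_s^2}{\sigma^2}\,ds=\frac23\Bigl(\oint\sigma\mathcal{K}^2\,ds-\oint\mathcal{K}\,ds\Bigr).
\]
This has no evident sign, and $\int_0^\infty\oint(\sigma_{ss}^2\sigma^{-1}+\sigma_s^2\sigma^{-2})\,ds\,dt<\infty$ does not follow. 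Monotonicity of $\mathcal{L}$ does hold, but for another reason. Cauchy--Schwarz applied to $\mathcal{L}=\oint\sigma\mathcal{K}\,ds$ gives $\oint\sigma\mathcal{K}^2ds\ge\mathcal{L}^2/(2A)$, while Aleksandrov--Fenchel gives $\oint\mathcal{K}\,ds\le\mathcal{L}^2/(2A)$. The resulting dissipation is only $\frac23\bigl(\oint\sigma(\mathcal{K}-\frac{\mathcal{L}}{2A})^2ds+\frac{\mathcal{L}^2}{2A}-\oint\mathcal{K}\,ds\bigr)$. That could drive subsequential convergence to ellipses once uniform smooth bounds are known. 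It gives no time-uniform control of $\oint\sigma_s^2\sigma^{-2}\,ds$, however, so it cannot replace the missing Step 2. Once those uniform bounds are available, your Steps 3 and 4 are sound in outline, using either the corrected dissipation above or the paper's bound $\int_0^\infty E\,dt<\infty$.
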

This paper is organized as follows. Section \ref{sec:preliminaries} recalls the relevant definitions, notions, and geometric inequalities in affine plane geometry. The basic properties of the flow \eqref{main-flow1}, including the short-time existence, the evolution equations of geometric quantities, and several monotone quantities, are discussed in Section \ref{sec:basic}. In Section \ref{sec:sigma-bound}, we establish uniform bounds for the affine support function along the flow. Section \ref{sec:energy-estimates} is devoted to energy estimates. Finally, by combining these energy estimates, we establish the long-time existence of the flow and analyze its asymptotic behavior in Section \ref{Proof of Theorem A}.

\section{Preliminaries}\label{sec:preliminaries}
Let \(\gamma: S^1 \to \mathbb{R}^2\) be a smooth, strictly convex, embedded curve parametrized by \(x\).
The equi-affine metric is defined by
\[
g := [\gamma_x,\gamma_{xx}]^{\frac{1}{3}},
\]
where \([\cdot,\cdot]\) denotes the standard determinant on  \(\mathbb{R}^2\). 
The associated equi-affine arc-length parameter is
\[
s(x) := \int_0^x g(\eta)\,d\eta.
\]
The equi-affine tangent vector \(\mathcal{T}\), the equi-affine normal vector \(\mathcal{N}\), and the equi-affine curvature \(\mathcal{K}\) are defined as
\[
\mathcal{T} := \gamma_s, 
\qquad 
\mathcal{N} := \gamma_{ss}, 
\qquad 
\mathcal{K} := [\gamma_{ss},\gamma_{sss}].
\]
With respect to the equi-affine arc-length parameter \(s\), one has
\[
[\gamma_s,\gamma_{ss}] = 1, 
\qquad 
[\gamma_s,\gamma_{sss}] = 0, 
\qquad 
[\gamma_{ssss},\gamma_s] = \mathcal{K}.
\]
The equi-affine Serret–Frenet formulas are given by
\begin{equation}\label{structure}
	\mathcal{T}_s = \mathcal{N}, 
	\qquad 
	\mathcal{N}_s = -\mathcal{K}\,\mathcal{T}.
\end{equation}
A fundamental scalar quantity in affine differential geometry is the affine support function
\begin{equation*}\label{aff-supp}
	\sigma := [\gamma,\gamma_s].
\end{equation*}
By a direct computation, one can verify that $\sigma$ and the equi-affine curvature $\mathcal{K}$ are related by
\[
\sigma_{ss} + \sigma \mathcal{K} = 1.
\]
The global geometric quantities of $\gamma$, namely the equi-affine length $\mathcal{L}$ and the enclosed area $A$, can be expressed as
\begin{align}\label{length-area}
	\mathcal{L} = \oint ds = \oint \sigma \mathcal{K}\, ds, 
	\qquad 
	A = \frac{1}{2} \oint \sigma\, ds.
\end{align}

\subsection{Affine Geometric Inequalities}

We recall several fundamental affine geometric inequalities for convex curves, which will be used later.

\begin{prop}[Equi-affine isoperimetric inequality]\label{Iso ine}
	For any smooth strictly convex curve $\gamma$,
	\[
	\mathcal{L} \le 2\pi^{\frac{2}{3}} A^{\frac{1}{3}},
	\]
	with equality if and only if $\gamma$ is an ellipse.
\end{prop}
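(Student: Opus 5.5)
Since both the equi-affine length $\mathcal{L}$ and the area $A$ are invariant under $\mathrm{SA}(2)$, and the ratio $\mathcal{L}^3/A$ is also invariant under dilations (because $\mathcal{L}$ scales like $\lambda^{2/3}$ and $A$ like $\lambda^2$), the plan is to reduce the inequality to a statement about Euclidean curvature and then apply Hölder's inequality together with the Blaschke--Santaló type inequality for the polar body, or alternatively a direct Hölder argument. Concretely, I will write the curve in terms of the Euclidean arc-length $\ell$ and Euclidean curvature $\kappa>0$. From $g=[\gamma_x,\gamma_{xx}]^{1/3}$ with $x=\ell$ we get $ds=\kappa^{1/3}d\ell$. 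Parametrizing instead by the normal angle $\theta\in[0,2\pi)$, so that $d\ell=\kappa^{-1}d\theta=\rho\,d\theta$ with $\rho=h+h''$ the radius of curvature and $h$ the Euclidean support function, this gives
\[
\mathcal{L}=\int_0^{2\pi}\rho^{2/3}\,d\theta,\qquad A=\frac12\int_0^{2\pi}h\rho\,d\theta=\frac12\int_0^{2\pi}(h^2-h'^2)\,d\theta .
\]

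The first step is Hölder's inequality with exponents $3$ and $3/2$, which reduces the problem to a bound involving a translation-normalized support function. We have
\[
\mathcal{L}=\int \rho^{2/3}=\int (h\rho)^{2/3}h^{-2/3}\le\Big(\int h\rho\Big)^{2/3}\Big(\int h^{-2}\Big)^{1/3}=(2A)^{2/3}\Big(\int_0^{2\pi}h^{-2}d\theta\Big)^{1/3},
\]
which is valid once the origin is placed in the interior, so that $h>0$. The second step uses $\tfrac12\int h^{-2}d\theta=|K^\circ|$, the area of the polar body with respect to the origin. The Blaschke--Santaló inequality says that, for the origin at the Santaló point, $|K|\,|K^\circ|\le\pi^2$. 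Choosing the origin there gives
\[
\mathcal{L}^3\le (2A)^2\cdot 2|K^\circ|\le 8A^2\pi^2/A=8\pi^2A,
\]
that is, $\mathcal{L}\le 2\pi^{2/3}A^{1/3}$. This is a legitimate choice because $\mathcal{L}$ and $A$ are translation invariant.

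The equality case is handled as follows. Equality forces equality in Blaschke--Santaló, so $K$ is an ellipse. Conversely, for the unit circle we have $\mathcal{L}=2\pi$ and $A=\pi$, and $2\pi^{2/3}\pi^{1/3}=2\pi$, so equality holds there; by $\mathrm{SA}(2)$-invariance and scaling it then holds for every ellipse.

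The main obstacle is that this argument leans on the Blaschke--Santaló inequality together with its equality case, which is a deep result in its own right. If I want to avoid it, the fallback is a self-contained planar proof: a Steiner-symmetrization argument, since $\mathcal{L}$ is known to increase under symmetrization while $A$ is preserved, or else the classical Blaschke proof via affine-invariant variational arguments. Since the paper only recalls this inequality, I expect to cite the classical literature (Blaschke; see \cite{NS94}) for the equality characterization rather than reprove it.
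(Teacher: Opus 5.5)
Your argument is correct, and it differs from the paper, which gives no proof of Proposition~\ref{Iso ine}. The paper only recalls it as a classical fact, alongside the Blaschke--Santal\'o and Aleksandrov--Fenchel inequalities.

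What you have shown is that Proposition~\ref{Iso ine} is a corollary of Proposition~\ref{BS ine}. In the paper's affine notation, $\sigma\,ds=h\rho\,d\theta$ and $\sigma^{-2}\,ds=h^{-2}\,d\theta$, so your H\"older step reads
\[
\mathcal{L}=\oint\sigma^{\frac23}\sigma^{-\frac23}\,ds\le\Big(\oint\sigma\,ds\Big)^{\frac23}\Big(\oint\sigma^{-2}\,ds\Big)^{\frac13}=(2A)^{\frac23}\Big(\oint\sigma^{-2}\,ds\Big)^{\frac13}.
\]
Inserting $\oint\sigma^{-2}\,ds\le 2\pi^2A^{-1}$ then gives $\mathcal{L}^3\le 8\pi^2A$. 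The rigidity statement is inherited from the equality case of Blaschke--Santal\'o. The converse follows from your check on the unit circle together with $\mathrm{SL}(2)$, translation and dilation invariance of $\mathcal{L}^3/A$. This is the same H\"older device the paper itself uses in \eqref{1/sigma}.

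Your route buys economy. Only one deep ingredient is needed, namely Blaschke--Santal\'o with its equality case, and the paper recalls that anyway. So every later use of Proposition~\ref{Iso ine} in the paper (Lemma~\ref{inf-sup}, \eqref{1/sigma}) ultimately rests on Proposition~\ref{BS ine} alone.

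What it costs is independence. It makes the affine isoperimetric inequality logically downstream of the harder Santal\'o inequality. Classical direct proofs, such as Blaschke's or the symmetrization arguments you mention, do not need Santal\'o. Your fallback remarks are not needed for the argument as written.
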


\begin{prop}[Blaschke--Santal\'o inequality]\label{BS ine}
	For any smooth strictly convex curve $\gamma$, there exists a point $p_0 \in \mathbb{R}^2$ in the interior of $\gamma$ such that
	\[
	A \oint \frac{1}{\sigma^2}\, ds \le 2\pi^2,
	\]
	where $\sigma$ is the affine support function with respect to $p_0$. 
	Equality holds if and only if $\gamma$ is an ellipse.
\end{prop}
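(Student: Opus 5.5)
The plan is to reduce the statement to the classical Blaschke--Santal\'o inequality for the polar body. The key step is to rewrite $\oint \sigma^{-2}\,ds$ in Euclidean terms and recognize it as twice the area of the polar body. The inequality is not proved from scratch: its content is the classical Blaschke--Santal\'o inequality together with its equality case (Santal\'o; Petty or Meyer--Pajor for equality), which I take as known. What is actually checked here is the translation into equi-affine notation.

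\emph{Setup.} Let $K$ be the convex body bounded by $\gamma$, and translate so that $p_0$ is the origin. Let $\ell$ be Euclidean arc length, $\theta$ the normal angle, $\kappa=d\theta/d\ell>0$ the Euclidean curvature, and $h$ the Euclidean support function with respect to $p_0$. Since $[\gamma_\ell,\gamma_{\ell\ell}]=\kappa$, the equi-affine metric gives $ds=\kappa^{1/3}\,d\ell$. Hence
\[
\gamma_s=\kappa^{-1/3}\gamma_\ell,
\qquad
\sigma=[\gamma,\gamma_s]=\kappa^{-1/3}[\gamma,\gamma_\ell]=\kappa^{-1/3}h,
\]
using $[\gamma,\gamma_\ell]=\langle\gamma,\nu\rangle=h$ for the outer normal $\nu$ under the positive orientation. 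Then
\[
\sigma^{-2}\,ds=h^{-2}\kappa^{2/3}\kappa^{1/3}\,d\ell=h^{-2}\,d\theta .
\]

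\emph{Identifying the polar area.} For $p_0$ in the interior of $K$, the polar body $K^\circ$ has radial function $\rho_{K^\circ}(\theta)=1/h(\theta)$. Therefore
\[
|K^\circ|=\tfrac12\int_0^{2\pi} h^{-2}\,d\theta,
\qquad\text{so}\qquad
\oint\sigma^{-2}\,ds=2|K^\circ| .
\]
The claimed inequality is thus equivalent to $A\,|K^\circ|\le\pi^2$.

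\emph{Applying Blaschke--Santal\'o.} The inequality $A\,|K^\circ|\le\pi^2$ is the classical Blaschke--Santal\'o inequality in the plane, with $p_0$ chosen as the Santal\'o point of $K$. That point exists, is unique and is interior, because it is the unique minimizer of $p\mapsto|(K-p)^\circ|$, a strictly convex function that tends to $+\infty$ at $\partial K$. Equality holds exactly for ellipses, and for an ellipse the Santal\'o point is its center.

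For a self-contained proof I would use Steiner symmetrization. Symmetrization in a direction does not decrease $|(K-p)^\circ|$ at the Santal\'o point (Meyer--Pajor), and a sequence of symmetrizations converges to a disc, where equality holds. The equality case comes from a rigidity analysis in which all sections must be centered on a line, which characterizes ellipses.

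\emph{Main obstacle.} The computational identity is routine. The hard part is the sharp inequality and especially its equality characterization, which I would cite rather than reprove. For the paper's symmetric curves, $p_0$ can simply be taken to be the origin, since the Santal\'o point of an origin-symmetric body is its center.
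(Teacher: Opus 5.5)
Your proposal is correct and takes essentially the same approach as the paper: the paper gives no proof and simply recalls this as the classical Blaschke--Santal\'o inequality, and your identity $\oint\sigma^{-2}\,ds=\int_0^{2\pi}h^{-2}\,d\theta=2|K^\circ|$ (via $\sigma=h\,r^{1/3}$, matching the paper's Euclidean formula) is exactly the translation to $A\,|K^\circ|\le\pi^2$ that the paper leaves implicit. Your closing remark that the Santal\'o point of an origin-symmetric body is its center is also worth keeping, since it is what justifies the paper's later use of the inequality with $\sigma$ taken relative to the origin.
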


\begin{prop}[Aleksandrov--Fenchel inequality]\label{AF ine}
	For any smooth strictly convex curve $\gamma$,
	\[
	A \oint \mathcal{K}\, ds \le \frac{1}{2}\mathcal{L}^2,
	\]
	with equality if and only if $\gamma$ is an ellipse.
\end{prop}

\section{Basic properties of the flow}\label{sec:basic}
We begin with the short-time existence of the flow \eqref{main-flow1}. 
To this end, we reformulate the flow as a scalar parabolic equation via the Euclidean support function.

Let $\gamma: S^1 \to \mathbb{R}^2$ be a smooth, strictly convex, embedded closed curve parametrized by the inverse Gauss map. 
Its Euclidean support function is defined by
\[
h(z) := \langle \gamma(z), z \rangle, \quad z \in S^1.
\]
The curve can be reconstructed from $h$ via
\[
\gamma(z) = h(z) z + h_\theta(z) z_\theta,
\]
where $\theta$ denotes the angular parameter on $S^1$. 
Moreover, the radius of curvature is given by
\[
r[h] = h_{\theta\theta} + h.
\]
Let $\mathfrak{s}$ denote the Euclidean arc-length parameter, $\mathbf{t} = \gamma_\mathfrak{s}$ the unit tangent vector, and $\mathbf{n}$ the outward unit normal vector. In the Euclidean framework, the equi-affine arc length reads
\begin{align}\label{elements}
	ds = \kappa^{\frac{1}{3}}\, d\mathfrak{s} = r^{\frac{2}{3}}\, d\theta,
\end{align}
with the affine support function expressed as
\begin{equation}\label{support-relation}
	\sigma = h\, r^{\frac{1}{3}}.
\end{equation}
The affine tangent and normal vectors take the form
\begin{align}\label{vector-relation}
	\mathcal{T} &= \kappa^{-\frac{1}{3}} \mathbf{t}, \nonumber \\
	\mathcal{N} &= -\kappa^{\frac{1}{3}} \mathbf{n} - \frac{1}{3} \kappa^{-\frac{5}{3}}\kappa_\mathfrak{s} \mathbf{t},
\end{align}
and the affine curvature becomes
\begin{align}\label{curva-relation}
	\mathcal{K} = \frac{1}{3} \kappa^{-\frac{5}{3}}\kappa_{\mathfrak{s}\mathfrak{s}}
	- \frac{5}{9} \kappa^{-\frac{8}{3}}\kappa_\mathfrak{s}^2
	+ \kappa^{\frac{4}{3}}=r^{-1}\left(\left(r^{-\frac{1}{3}}\right)_{\theta\theta}+r^{-\frac{1}{3}}\right).
\end{align}
Combining \eqref{support-relation}--\eqref{curva-relation}, the flow \eqref{main-flow1} in the Euclidean setting can be written as
\begin{equation*}
	\left\{
	\begin{aligned}
		&\frac{\partial \gamma}{\partial t} =
		\Big(
		\frac{1}{3} h \kappa^{-2} \kappa_{\mathfrak{s}\mathfrak{s}}
		- \frac{5}{9} h \kappa^{-3} \kappa_\mathfrak{s}^2
		+ h \kappa - 1
		\Big)
		\Big(
		\kappa^{\frac{1}{3}} \mathbf{n} + \frac{1}{3} \kappa^{-\frac{5}{3}} \kappa_\mathfrak{s} \mathbf{t}
		\Big),\\
		&\gamma(\cdot,0) = \gamma_0.
	\end{aligned}
	\right.
\end{equation*}
The above initial value problem is equivalent to a scalar parabolic equation for the support function $h$:
\begin{equation}\label{scalar-eq}
	\left\{
	\begin{aligned}
		&\frac{\partial h}{\partial t} 
		= -\frac{h h_{\theta\theta\theta\theta} + h h_{\theta\theta}}{3 (h_{\theta\theta} + h)^{7/3}}
		+ \frac{4 h (h_{\theta\theta\theta} + h_\theta)^2}{9 (h_{\theta\theta} + h)^{10/3}}
		+ \frac{h}{(h_{\theta\theta} + h)^{4/3}}
		- \frac{1}{(h_{\theta\theta} + h)^{1/3}},\\
		&h(z,0)=h_0(z),
	\end{aligned}
	\right.
\end{equation}
where $z\in S^1$ and $h_0$ is the support function for $\gamma_0$.
Equation \eqref{scalar-eq} is a nonlinear fourth-order scalar equation, which is parabolic as long as $h>0$ and $h_{\theta\theta}+h>0$ everywhere. 
By semigroup theory (see Proposition 2.3 of \cite{Lun90}, see also \cite{Lun95}), if the initial support function $h_0$ is smooth, positive, and satisfies $(h_0)_{\theta\theta}+h_0>0$, then there exists a unique, smooth solution of \eqref{scalar-eq} for a short time.  
This solution can then be used to reconstruct a unique solution of the original flow \eqref{main-flow1} for a short time by the same argument as in \cite{And98}, Lemma I1.2. Thus we conclude that
\begin{prop}[Local existence and uniqueness]\label{short-time}
	Assume that $\gamma_0$ is a smooth, closed, strictly convex curve in $\mathbb{R}^2$. Then there exists a time $T>0$ such that the flow \eqref{main-flow1} admits a unique smooth solution $\gamma(\cdot,t)$ satisfying $\gamma(\cdot,0)=\gamma_0$.
\end{prop}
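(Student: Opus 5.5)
The plan is to reduce the flow to the scalar problem \eqref{scalar-eq} for the Euclidean support function and then apply analytic semigroup theory for quasilinear parabolic equations. Afterwards we reconstruct the curve from $h$. Since the flow \eqref{main-flow1} moves $\gamma$ with a velocity having a tangential component, we first note which part matters. Up to tangential diffeomorphisms, only the normal speed affects the geometry. The normal speed is
\[
\langle \partial_t\gamma,\mathbf{n}\rangle = (\sigma\mathcal{K}-1)\kappa^{1/3}\cdot(-1)\cdot(-1),
\]
with the signs as in \eqref{vector-relation}. We express it through \eqref{support-relation}, \eqref{curva-relation}, $r=h_{\theta\theta}+h$ and $\kappa=r^{-1}$. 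This gives $\partial_t h$ as the right-hand side of \eqref{scalar-eq}. This is a direct computation, which we would only verify rather than redo.

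Second, we verify parabolicity. The leading term is $-\tfrac{h}{3}r^{-7/3}h_{\theta\theta\theta\theta}$. Its symbol is $-\tfrac{h}{3}r^{-7/3}\xi^4$, which is strictly negative provided $h>0$ and $r>0$.

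For the origin-symmetric convex curves of Theorem \ref{A}, the origin lies in the interior, so $h_0>0$. In general we may assume this too, since in the setting of the paper the origin is interior. This is the one point where $\sigma$ not being translation invariant matters.

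We then work in little H\"older spaces $h^{4+\alpha}(S^1)$. Let $\mathcal{O}$ be the open set of functions $u\in h^{4+\alpha}(S^1)$ with $u>0$ and $u_{\theta\theta}+u>0$. We write \eqref{scalar-eq} as $\partial_t h=F(h)$, where $F:\mathcal{O}\to h^{\alpha}$ is smooth (the map $u\mapsto r^{-p}$ is smooth as long as $r>0$). The Fr\'echet derivative $DF(h_0)$ is a fourth-order uniformly elliptic operator with smooth coefficients, with principal part $-\tfrac{h_0}{3}r_0^{-7/3}\partial_\theta^4$. It therefore generates an analytic semigroup on $h^\alpha$, with domain $h^{4+\alpha}$. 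Lunardi's theorem for fully nonlinear equations (Proposition 2.3 of \cite{Lun90}) then gives $T>0$ and a unique solution $h\in C([0,T];h^{4+\alpha})\cap C^1([0,T];h^\alpha)$ that stays in $\mathcal{O}$. Parabolic bootstrapping shows $h$ is smooth for $t>0$, and smooth up to $t=0$ because $h_0$ is smooth. Shrinking $T$ if necessary keeps $h>0$ and $r>0$.

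Third, we reconstruct the curve. Set $\tilde\gamma(z,t)=h z+h_\theta z_\theta$; this is a smooth family of strictly convex curves. Its normal velocity equals $\partial_t h$, which is the normal component of the velocity in \eqref{main-flow1}. Its tangential velocity, however, differs from the prescribed tangential component $-\tfrac13(1-\sigma\mathcal K)\kappa^{-5/3}\kappa_{\mathfrak s}$. We fix this as in \cite{And98}, Lemma I1.2. We solve the ODE $\partial_t\phi=V(\phi,t)$ on $S^1$ for reparametrizations $\phi(\cdot,t)$, with $\phi(\cdot,0)$ the inverse Gauss map parametrization of $\gamma_0$. Here $V$ is the smooth vector field that corrects the tangential discrepancy. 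Then $\gamma(x,t)=\tilde\gamma(\phi(x,t),t)$ solves \eqref{main-flow1}. Global existence of this ODE on $[0,T]$ is automatic, since $S^1$ is compact and $V$ is smooth.

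For uniqueness, take any smooth solution of \eqref{main-flow1} that stays strictly convex on a short interval. Its support function solves \eqref{scalar-eq}, so it coincides with $h$ by uniqueness for the scalar problem. The parametrization is then determined by the same ODE, which gives uniqueness of $\gamma$.

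The main obstacle I expect is checking that the fully nonlinear fourth-order operator fits Lunardi's framework. Concretely, this means two things: the sign in the derivation of \eqref{scalar-eq} must be right, so that the equation is forward parabolic, and the linearization must be sectorial on little H\"older spaces. Both come down to the uniform negativity of the leading coefficient $-\tfrac{h}{3}r^{-7/3}$ on a neighborhood of $h_0$.
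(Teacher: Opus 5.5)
Your proposal is correct and is essentially the paper's argument: rewrite \eqref{main-flow1} as the scalar equation \eqref{scalar-eq} for the support function, use its parabolicity when $h>0$ and $h_{\theta\theta}+h>0$ to apply Proposition 2.3 of \cite{Lun90}, and recover the curve with its tangential velocity as in \cite{And98}, Lemma I1.2. Your remark that $h_0>0$ (origin in the interior) is required is the same implicit hypothesis the paper uses. It is a genuine assumption, not a normalization, since the flow is not translation invariant. Your other additions (little H\"older spaces, the sign of the principal symbol, the reparametrization ODE for uniqueness) fill in what the paper leaves to these references.
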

As a direct consequence of uniqueness, symmetry is preserved.
\begin{prop}[Symmetry preservation]\label{symmetric}
	If the initial curve $\gamma_0$ is origin-symmetric, then the solution $\gamma(\cdot,t)$ remains origin-symmetric for all $t \in [0,T)$.
\end{prop}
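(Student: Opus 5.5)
The plan is to use the uniqueness part of Proposition~\ref{short-time} together with the invariance of the flow \eqref{main-flow1} under linear maps in $\mathrm{SL}(2)$, in particular under the antipodal map $\mathrm{x}\mapsto -\mathrm{x}$. This map is $-I\in\mathrm{SL}(2)$, since $\det(-I)=1$ in the plane.

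Given the solution $\gamma(\cdot,t)$ on $[0,T)$, define $\tilde\gamma(\cdot,t):=-\gamma(\cdot,t)$. The first step is to check how each invariant transforms under $\gamma\mapsto-\gamma$.
\begin{itemize}
\item The equi-affine metric $g=[\gamma_x,\gamma_{xx}]^{1/3}$ is unchanged, because the determinant is bilinear and $[-a,-b]=[a,b]$. Hence the arc-length parameter $s$ is the same.
\item For the same reason $\mathcal{K}=[\gamma_{ss},\gamma_{sss}]$ and $\sigma=[\gamma,\gamma_s]$ are unchanged.
\item The normal $\mathcal{N}=\gamma_{ss}$ changes sign: $\tilde{\mathcal{N}}=-\mathcal{N}$.
\end{itemize}
It follows that
\[
\partial_t\tilde\gamma=-(1-\sigma\mathcal{K})\mathcal{N}=(1-\tilde\sigma\tilde{\mathcal{K}})\tilde{\mathcal{N}},
\]
so $\tilde\gamma$ solves \eqref{main-flow1} with initial datum $-\gamma_0$.

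Origin symmetry of $\gamma_0$ means $-\gamma_0$ is a reparametrization of $\gamma_0$; concretely, $-\gamma_0(x)=\gamma_0(\phi(x))$ for an orientation-preserving diffeomorphism $\phi$ of $S^1$ (the shift by half a period). The flow is geometric: its velocity is built from $s$-derivatives and is independent of the parametrization. Therefore $\gamma(\phi(\cdot),t)$ also solves \eqref{main-flow1} with initial datum $-\gamma_0$.

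This is the one delicate point, since uniqueness must be understood modulo parametrization. The cleanest way to handle it is to pass to the support-function formulation \eqref{scalar-eq}, where uniqueness is a statement about functions on $S^1$:
\begin{itemize}
\item Write $\tilde h(z,t)=h(-z,t)$, which is the support function of $-\gamma(\cdot,t)$.
\item Check directly that \eqref{scalar-eq} is invariant under $\theta\mapsto\theta+\pi$. Its coefficients do not depend on $\theta$ explicitly, and odd-order derivatives appear only squared, so $\tilde h$ solves \eqref{scalar-eq} with initial datum $h_0(-z)$.
\item Origin symmetry gives $h_0(-z)=h_0(z)$.
\end{itemize}

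By the uniqueness of smooth solutions of \eqref{scalar-eq} from Proposition~\ref{short-time}, we get $h(-z,t)=h(z,t)$ for all $t\in[0,T)$. Since $h$ determines the curve $\gamma(\cdot,t)$ via $\gamma=hz+h_\theta z_\theta$, the curve is origin-symmetric. Strict convexity and positivity of $h$, needed for parabolicity, hold on $[0,T)$ by the definition of the solution. No real obstacle arises beyond verifying the invariance of \eqref{scalar-eq}.
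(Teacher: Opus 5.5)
Your proof is correct and is essentially the paper's argument. The paper notes that $-\gamma(\cdot+\pi,t)$ and $\gamma(\cdot,t)$ both solve \eqref{main-flow1} with the same initial curve and invokes the uniqueness in Proposition~\ref{short-time}; you run the same invariance-plus-uniqueness argument on the support function, which conveniently removes the parametrization subtlety. In that step, autonomy of \eqref{scalar-eq} in $\theta$ is all you need: $z\mapsto -z$ is the rotation $\theta\mapsto\theta+\pi$, so the remark about odd-order derivatives appearing only squared is superfluous, and the curve-level check under $\gamma\mapsto-\gamma$ is not actually used.
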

\begin{proof}
	Note that both $-\gamma(\cdot+\pi,t)$ and $\gamma(\cdot,t)$ satisfy \eqref{main-flow1} with initial data $-\gamma(\cdot+\pi,0)$ and $\gamma(\cdot,0)$, respectively. At time $t=0$ we have $-\gamma(\cdot+\pi,0)=\gamma(\cdot,0)$. Therefore, by Proposition \ref{short-time} we conclude that $-\gamma(\cdot+\pi,t)=\gamma(\cdot,t)$ as long as the flow exists.
\end{proof}
\begin{prop}
	Ellipses are the only stationary solutions of the flow \eqref{main-flow1} among closed curves.
\end{prop}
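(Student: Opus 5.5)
A curve is stationary for \eqref{main-flow1} exactly when $1-\sigma\mathcal{K}\equiv 0$. By \eqref{M-relation} this is equivalent to $\sigma_{ss}\equiv 0$ along the closed curve. The plan is to show that this forces $\sigma$ to be constant and $\mathcal{K}$ to be a positive constant, and then to identify curves of constant positive equi-affine curvature as ellipses centered at the origin. Conversely, for any ellipse centered at the origin we check directly that $\sigma\mathcal{K}=1$, so such ellipses are indeed stationary.

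First I integrate. Since $\sigma$ is a smooth periodic function of the equi-affine arc length $s$ on $[0,\mathcal{L})$, the condition $\sigma_{ss}=0$ makes $\sigma$ affine in $s$. Periodicity then forces $\sigma\equiv c$ for some constant $c$. The condition $\sigma\mathcal{K}=1$ rules out $c=0$, so $\mathcal{K}\equiv 1/c$. Next I use \eqref{length-area}: $2A=\oint\sigma\,ds=c\mathcal{L}$. Since $A>0$ and $\mathcal{L}>0$ for a strictly convex closed curve (orientation chosen so that $[\gamma_s,\gamma_{ss}]=1$), we get $c>0$. Hence $\mathcal{K}=1/c>0$ is a positive constant.

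Second I identify the curve, which is the only step needing care. By the Frenet formulas \eqref{structure}, $\gamma_{sss}=-\mathcal{K}\gamma_s$, so $\gamma_s$ satisfies a constant-coefficient linear ODE with $\mathcal{K}=k^2>0$. Its solutions are $\gamma_s=a\cos(ks)+b\sin(ks)$ for constant vectors $a,b$. Integrating gives
\[
\gamma(s)=p+\tfrac1k\bigl(a\sin(ks)-b\cos(ks)\bigr),
\]
which is an ellipse centered at $p$; it is nondegenerate because $[\gamma_s,\gamma_{ss}]=k[a,b]=1$. To locate the center, differentiate $\sigma=[\gamma,\gamma_s]$: this gives $\sigma_s=[\gamma,\gamma_{ss}]$, and $\sigma_s=0$ forces $[\gamma,\gamma_{ss}]\equiv0$. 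Substituting $\gamma_{ss}=-k^2(\gamma-p)$ yields $[p,\gamma-p]\equiv 0$ for all $s$. Since $\gamma-p$ sweeps out a nondegenerate ellipse, this forces $p=0$, so the ellipse is centered at the origin.

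Finally, for the converse, take $\gamma=\frac1k(a\sin ks-b\cos ks)$ with $k[a,b]=1$. A direct computation gives $\sigma=[\gamma,\gamma_s]=\frac1k[a,b]=1/k^2$ and $\mathcal{K}=k^2$, so $1-\sigma\mathcal{K}=0$. The statement ``ellipses'' should therefore be read as ellipses centered at the origin, consistent with the remark that $\sigma$ is not translation invariant. I expect no genuine obstacle. The point most likely to need care is fixing the sign of $c$ through the area identity, since the argument must exclude $\mathcal{K}<0$, where the solutions would be hyperbola arcs that cannot close up.
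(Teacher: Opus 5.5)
Your proof is correct and follows the paper's route. Periodicity turns $\sigma_{ss}=0$ into $\sigma\equiv c$, and a constant positive $\mathcal{K}$ then gives an ellipse. You exclude $c<0$ via $2A=c\,\mathcal{L}$ rather than the paper's appeal to the hyperbola case of the equi-affine classification, and you integrate $\gamma_{sss}=-\mathcal{K}\gamma_s$ explicitly instead of citing that classification.

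Your extra step $\sigma_s=[\gamma,\gamma_{ss}]=-k^2[p,\gamma-p]\equiv 0\Rightarrow p=0$ sharpens what the paper's proof skips: since $\sigma$ is not translation invariant, only ellipses centered at the origin are stationary.
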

\begin{proof}
	By \eqref{M-relation}, this is equivalent to solving
	\[
	\sigma_{ss} = 0,
	\]
	where $\sigma$ is a function of $s$ with period $\mathcal{L}$.  
	The general solution is easily obtained as
	\[
	\sigma(s) = C_1 s + C_2
	\]
	with constants $C_1$ and $C_2$.
	Imposing the periodicity condition $\sigma(s+\mathcal{L}) = \sigma(s)$ immediately yields $C_1 = 0$, hence $\sigma$ is constant: $\sigma \equiv C_2$.  
	Clearly, $\sigma$ cannot vanish identically. If $\sigma$ were a negative constant, the corresponding equi-affine curvature $\mathcal{K}$ would be negative.  
	By the classification of equi-affine plane curves \cite{Su83}, such a curve would be a hyperbola, which contradicts the assumption that the curve is closed.  
	Therefore, $\sigma$ must be a positive constant, and in this case the curve is an ellipse.
\end{proof}

\subsection{Evolution Equations and Monotone Quantities}
We next derive the evolution equations of several fundamental geometric quantities under a general equi-affine curve flow. Let $\gamma(\cdot,t)$ be a one-parameter family of smooth strictly convex closed plane curves evolving by
\begin{equation}\label{general-flow}
	\left\{
	\begin{aligned}
		&\frac{\partial \gamma}{\partial t}
		= \alpha \mathcal{N} + \beta \mathcal{T},\\
		&\gamma(\cdot,0) = \gamma_0,
	\end{aligned}
	\right.
\end{equation}
where $\alpha$ and $\beta$ are smooth scalar functions.
\begin{prop}\label{evo}
	Assume that the curve $\gamma(\cdot,t)$ evolves under the flow \eqref{general-flow}. Then the following evolution equations hold:
	\begin{itemize}[left=12pt, labelsep=5pt, itemsep=0pt]
		\item[$\mathrm{(i)}$]$\displaystyle\frac{g_t}{g}=\frac{1}{3}\alpha_{ss}-\frac{2}{3}\alpha\mathcal{K}+\beta_s$,\\
		\item[$\mathrm{(ii)}$]$\displaystyle\mathcal{K}_t=\frac{1}{3}\left(\alpha_{ssss}+5\alpha_{ss}\mathcal{K}+5\alpha_s\mathcal{K}_s+4\alpha\mathcal{K}^2+3\beta\mathcal{K}_s+\alpha\mathcal{K}_{ss}\right)$,\\
		\item[$\mathrm{(iii)}$]$\displaystyle\sigma_t=-\frac{1}{3}\left(\alpha\mathcal{K}+\alpha_{ss}\right)\sigma+\left(\alpha_s+\beta\right)\sigma_s-\alpha$,\\
		\item[$\mathrm{(iv)}$]$\displaystyle\mathcal{L}_t=-\frac{2}{3}\oint\alpha\mathcal{K}\,ds$,\\
		\item[$\mathrm{(v)}$]$\displaystyle A_t=-\oint\alpha\,ds$,\\
		\item[$\mathrm{(vi)}$]$\displaystyle\left(\oint\frac{1}{\sigma^2}\,ds\right)_t=2\oint\alpha\sigma^{-3}\,ds$,\\
		\item[$\mathrm{(vii)}$]$\displaystyle\left(\oint\mathcal{K}\,ds\right)_t=\frac{2}{3}\oint\alpha\left(\mathcal{K}_{ss}+\mathcal{K}^2\right)\,ds$.
	\end{itemize}
\end{prop}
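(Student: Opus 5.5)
The plan is to fix the time-independent parameter $x$ and differentiate the structure relations in $t$. The $t$- and $x$-derivatives commute, but $\partial_t$ and $\partial_s$ do not. Since $\partial_s = g^{-1}\partial_x$, we have the commutator
\[
\partial_t\partial_s = \partial_s\partial_t - \frac{g_t}{g}\,\partial_s .
\]

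\textbf{Step (i).} Write $\gamma_t=\alpha\mathcal{N}+\beta\mathcal{T}$ and differentiate $[\gamma_s,\gamma_{ss}]=1$. First compute $\partial_s\gamma_t$ with the Frenet formulas \eqref{structure}:
\[
\partial_s(\alpha\mathcal{N}+\beta\mathcal{T})=(\alpha_s+\beta)\mathcal{N}+(\beta_s-\alpha\mathcal{K})\mathcal{T}.
\]
Then
\[
(\gamma_s)_t=\partial_s\gamma_t-\frac{g_t}{g}\,\mathcal{T},
\qquad
(\gamma_{ss})_t=\partial_s\big((\gamma_s)_t\big)-\frac{g_t}{g}\,\mathcal{N}.
\]
Expand both in the frame $\{\mathcal{T},\mathcal{N}\}$. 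Only the $\mathcal{T}$-component of $(\gamma_s)_t$ and the $\mathcal{N}$-component of $(\gamma_{ss})_t$ contribute to $\partial_t[\gamma_s,\gamma_{ss}]=0$. Setting $\varphi=g_t/g$, this gives a linear relation
\[
3\varphi=\alpha_{ss}-2\alpha\mathcal{K}+3\beta_s .
\]
Here I expect the coefficient $-2\alpha\mathcal{K}$ to come from $-\alpha\mathcal{K}$ appearing once in each component.

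\textbf{Steps (ii) and (iii).}
\begin{itemize}
\item For (ii), I will differentiate $\mathcal{K}=[\gamma_{ss},\gamma_{sss}]$. Equivalently, and more cleanly, I will compute $\mathcal{N}_t$ in the frame and differentiate $\mathcal{N}_s=-\mathcal{K}\mathcal{T}$ in $t$, reading off the $\mathcal{T}$-component. This needs $\varphi_s$ and $\varphi_{ss}$, which is why fourth derivatives of $\alpha$ appear. This bookkeeping, tracking all terms to get the coefficients $5,5,4,3,1$ with the overall factor $\tfrac13$, is the main computational obstacle. I will organize it by writing $\mathcal{T}_t=a\mathcal{T}+b\mathcal{N}$ and $\mathcal{N}_t=c\mathcal{T}+d\mathcal{N}$, noting $a+d=0$ from $[\mathcal{T},\mathcal{N}]=1$. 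Then
\[
\mathcal{K}_t=-\big(c_s+\ldots\big)
\]
follows by comparing coefficients in $(\mathcal{N}_s)_t=\partial_s\mathcal{N}_t-\varphi\mathcal{N}_s$.
\item For (iii), $\sigma_t=[\gamma_t,\mathcal{T}]+[\gamma,\mathcal{T}_t]$. Substitute $\mathcal{T}_t=a\mathcal{T}+b\mathcal{N}$ and use $\gamma=-\sigma_s\mathcal{T}+\sigma\mathcal{N}$, which follows from $[\gamma,\mathcal{T}]=\sigma$ and $[\gamma,\mathcal{N}]=\sigma_s$. With $a=\beta_s-\alpha\mathcal{K}-\varphi$ this yields the stated formula.
\end{itemize}

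\textbf{Global quantities (iv)--(vii).} For these I use $(ds)_t=\varphi\,ds$ and integrate by parts on the closed curve.
\begin{itemize}
\item (iv): $\mathcal{L}_t=\oint\varphi\,ds$. The terms $\alpha_{ss}$ and $\beta_s$ integrate to zero, leaving $-\tfrac23\oint\alpha\mathcal{K}\,ds$.
\item (v): the area variation is $-\oint[\gamma_t,\gamma_s]\,ds$, with the sign fixed by the orientation for which $[\gamma_s,\gamma_{ss}]=1$. Since $[\alpha\mathcal{N}+\beta\mathcal{T},\mathcal{T}]=-\alpha$, this gives the stated formula. Alternatively I can use $A=\tfrac12\oint\sigma\,ds$ together with (iii), \eqref{M-relation}, and $\oint\sigma\mathcal{K}\,ds=\oint ds$ as a cross-check.
\item (vi): combine $-2\sigma^{-3}\sigma_t+\sigma^{-2}\varphi$. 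The $\beta$-terms form $\partial_s(\beta\sigma^{-2})$. The remaining $\alpha_s\sigma_s$ and $\alpha_{ss}$ terms cancel after integration by parts, since $\sigma^{-2}\alpha_{ss}/3+2\alpha\sigma^{-3}\alpha\ldots$ reorganize into the exact derivative $\partial_s(\alpha_s\sigma^{-2})$ and a multiple of $\alpha\mathcal{K}\sigma^{-2}$ that cancels. The result is $2\oint\alpha\sigma^{-3}\,ds$.
\item (vii): integrate $\mathcal{K}_t+\mathcal{K}\varphi$ and move all derivatives onto $\mathcal{K}$. The $\beta$-terms combine to $\partial_s(\beta\mathcal{K})$. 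The remaining terms collect to $\tfrac23\alpha(\mathcal{K}_{ss}+\mathcal{K}^2)$; for instance, $\tfrac13\alpha_{ssss}$ integrates to zero, and $\tfrac53\alpha_{ss}\mathcal{K}$ with $\tfrac13\alpha_{ss}\mathcal{K}$ from $\varphi$ gives $2\alpha\mathcal{K}_{ss}$ after integrating by parts.
\end{itemize}
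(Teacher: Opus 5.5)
Your plan is correct. For (iii)--(vii) it is essentially the paper's argument. (iii) comes from $\sigma_t=[\gamma_t,\mathcal{T}]+[\gamma,\mathcal{T}_t]$ with the commutator $\partial_t\partial_s=\partial_s\partial_t-\frac{g_t}{g}\partial_s$. (iv) comes from integrating (i). (vi) and (vii) come from combining with $(ds)_t=\varphi\,ds$ and integrating by parts. For (v) the paper uses what you list as your cross-check, namely $A=\frac12\oint\sigma\,ds$ together with (i), (iii) and $\sigma_{ss}+\sigma\mathcal{K}=1$.

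The real difference is in (i) and (ii), which the paper does not prove but quotes from Appendix A of \cite{Yan23}. Your frame computation does close. Writing $\mathcal{T}_t=a\mathcal{T}+b\mathcal{N}$ and $\mathcal{N}_t=c\mathcal{T}+d\mathcal{N}$, one finds $b=\alpha_s+\beta$, $a=\beta_s-\alpha\mathcal{K}-\varphi$, $c=a_s-b\mathcal{K}$ and $d=a+b_s-\varphi$. Then $a+d=0$ is exactly (i), and feeding (i) back gives the useful simplification $a=-\frac13(\alpha_{ss}+\alpha\mathcal{K})$. The $\mathcal{T}$-component of $(\mathcal{N}_s)_t=\partial_s\mathcal{N}_t-\varphi\mathcal{N}_s$ yields $\mathcal{K}_t=-c_s-(2a+\varphi)\mathcal{K}$; the $\mathcal{N}$-component is just the identity $c+d_s=-b\mathcal{K}$. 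Expanding gives precisely the coefficients $\frac13,\frac53,\frac53,\frac43,1,\frac13$ of (ii), with the $\beta_s\mathcal{K}$ terms cancelling. So your version is self-contained and produces $\mathcal{T}_t$ once and for all, which makes (iii) immediate. The paper's version is shorter only because it outsources (i)--(ii).

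Two sign slips should be corrected before you write this out.

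First, $[\gamma,\mathcal{T}]=\sigma$, $[\gamma,\mathcal{N}]=\sigma_s$ and $[\mathcal{T},\mathcal{N}]=1$ give $\gamma=\sigma_s\mathcal{T}-\sigma\mathcal{N}$, not $-\sigma_s\mathcal{T}+\sigma\mathcal{N}$. On the counterclockwise unit circle, for instance, $\mathcal{N}=-\gamma$ and $\sigma\equiv 1$. Substituting your expression literally would give $[\gamma,\mathcal{T}_t]=-a\sigma-b\sigma_s$ and the wrong sign in (iii). You do not need the decomposition at all, since bilinearity gives $[\gamma,a\mathcal{T}+b\mathcal{N}]=a\sigma+b\sigma_s$.

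Second, from $A=\frac12\oint[\gamma,\gamma_x]\,dx$ and one integration by parts, $A_t=+\oint[\gamma_t,\gamma_s]\,ds$. With your $-\oint[\gamma_t,\gamma_s]\,ds$ and $[\alpha\mathcal{N}+\beta\mathcal{T},\mathcal{T}]=-\alpha$ you would obtain $+\oint\alpha\,ds$, the opposite of (v). As a sanity check, $\mathcal{N}$ points inward, so $\alpha>0$ must shrink the area. Your cross-check route gives the correct sign.

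Finally, your sketch of (vi) is garbled, though the mechanism is right. The $\alpha\mathcal{K}\sigma^{-2}$ terms cancel pointwise, and $-2\sigma^{-3}\sigma_t+\sigma^{-2}\varphi$ equals $(\alpha_s\sigma^{-2})_s+(\beta\sigma^{-2})_s+2\alpha\sigma^{-3}$.
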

\begin{proof}
	The identities \textup{(i)} and \textup{(ii)} are established in Appendix~A of \cite{Yan23}. 
	The remaining assertions follow from straightforward computations.
	We first compute the evolution of the affine support function $\sigma=[\gamma,\gamma_s]$. 
	Using \eqref{structure} together with \textup{(i)} and the commutation relation 
	$\partial_t\partial_s=\partial_s\partial_t-\frac{g_t}{g}\partial_s$, we obtain
	\begin{align*}
		\sigma_t
		&=\frac{\partial}{\partial t}[\gamma,\gamma_s]
		=[\gamma_t,\gamma_s]+[\gamma,\gamma_{st}]\\
		&=[\alpha\mathcal{N}+\beta\mathcal{T},\gamma_s]
		+[\gamma,\gamma_{ts}-\tfrac{g_t}{g}\gamma_s]\\
		&=-\alpha+(\alpha_s+\beta)[\gamma,\mathcal{N}]
		+\left(-\alpha\mathcal{K}+\beta_s-\tfrac{g_t}{g}\right)[\gamma,\mathcal{T}]\\
		&=-\alpha+(\alpha_s+\beta)\sigma_s
		-\frac{1}{3}\left(\alpha\mathcal{K}+\alpha_{ss}\right)\sigma,
	\end{align*}
	which proves \textup{(iii)}.
	Integrating \textup{(i)} over the curve immediately yields \textup{(iv)}:
	\[
	\mathcal{L}_t=\oint\frac{g_t}{g}\,ds
	=-\frac{2}{3}\oint\alpha\mathcal{K}\,ds.
	\]
	Next, recalling \eqref{length-area} and using \textup{(i)} and \textup{(iii)}, we derive
	\begin{align*}
	A_t
	&=\frac{1}{2}\oint\sigma_t\,ds+\frac{1}{2}\oint\sigma\frac{g_t}{g}\,ds\\
	&=\frac{1}{2}\oint\left(-\alpha\sigma\mathcal{K}
	+\alpha_s\sigma_s+\beta\sigma_s-\alpha+\sigma\beta_s\right)ds\\
	&=\frac{1}{2}\oint\left(-\alpha(\sigma\mathcal{K}+\sigma_{ss})-\alpha\right)ds\\
	&=-\oint\alpha\,ds,
\end{align*}
	which proves \textup{(v)}.
	Combining \textup{(i)} and \textup{(iii)}, we further obtain
	\begin{align*}
		\left(\oint\frac{1}{\sigma^2}\,ds\right)_t
		&=\oint\left(-2\sigma^{-3}\sigma_t+\sigma^{-2}\frac{g_t}{g}\right)ds\\
		&=\oint\left(\sigma^{-2}\alpha_{ss}
		-2\alpha_s\sigma^{-3}\sigma_s
		-2\beta\sigma^{-3}\sigma_s
		+2\alpha\sigma^{-3}
		+\beta_s\sigma^{-2}\right)ds\\
		&=2\oint\alpha\sigma^{-3}\,ds,
	\end{align*}
	which gives \textup{(vi)}.
	Finally, using \textup{(i)} and \textup{(ii)}, we compute
	\begin{align*}
		\left(\oint\mathcal{K}\,ds\right)_t
		&=\oint\left(\mathcal{K}_t+\mathcal{K}\frac{g_t}{g}\right)ds\\
		&=\oint\left(\frac{1}{3}\alpha_{ssss}
		+2\alpha_{ss}\mathcal{K}
		+\frac{5}{3}\alpha_s\mathcal{K}_s
		+\frac{2}{3}\alpha\mathcal{K}^2
		+\beta\mathcal{K}_s
		+\frac{1}{3}\alpha\mathcal{K}_{ss}
		+\beta_s\mathcal{K}\right)ds\\
		&=\frac{2}{3}\oint\alpha\left(\mathcal{K}_{ss}+\mathcal{K}^2\right)\,ds.
	\end{align*}
	This proves \textup{(vii)} and completes the proof.
\end{proof}

We now specialize to the flow \eqref{main-flow1} and deduce the following monotonicity properties.
\begin{prop}\label{monotone}
	Under the flow \eqref{main-flow1}, the evolving curve $\gamma(\cdot,t)$ preserves the enclosed area $A$, while the quantity $\displaystyle\oint\frac{1}{\sigma^2}\,ds$ is non-decreasing.
\end{prop}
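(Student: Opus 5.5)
We specialize Proposition~\ref{evo} to the flow \eqref{main-flow1}, which has normal speed $\alpha = 1-\sigma\mathcal{K} = \sigma_{ss}$ (by \eqref{M-relation}) and tangential speed $\beta = 0$. Both claims then reduce to evaluating the integrals in items (v) and (vi) with this choice of $\alpha$.

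\textbf{Area.} By Proposition~\ref{evo}(v),
\[
A_t = -\oint \alpha\, ds = -\oint \sigma_{ss}\, ds = 0,
\]
since $\sigma$ is a smooth periodic function of the equi-affine arc length $s$. Equivalently, this is the affine Minkowski formula $\oint(1-\sigma\mathcal{K})\,ds = 0$. So the enclosed area is constant along the flow.

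\textbf{The quantity $\oint \sigma^{-2}\,ds$.} By Proposition~\ref{evo}(vi),
\[
\left(\oint \frac{1}{\sigma^2}\, ds\right)_t = 2\oint \sigma_{ss}\,\sigma^{-3}\, ds .
\]
Integrating by parts once on the closed curve, with no boundary terms, gives
\[
2\oint \sigma_{ss}\,\sigma^{-3}\, ds = -2\oint \sigma_s\,(\sigma^{-3})_s\, ds = 6\oint \sigma^{-4}\sigma_s^2\, ds \ \ge 0 .
\]
Hence $\oint \sigma^{-2}\,ds$ is non-decreasing.

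\textbf{Justification of $\sigma>0$.} The only point needing care is that $\sigma$ stays positive, so that $\sigma^{-2}$ and $\sigma^{-3}$ are defined and smooth. By Proposition~\ref{symmetric}, the curve remains origin-symmetric. As long as it remains strictly convex and smooth (which holds on the existence interval, since the scalar equation \eqref{scalar-eq} requires $h>0$ and $r>0$), the origin lies in its interior. Then $h>0$, and so $\sigma = h\,r^{1/3} > 0$ by \eqref{support-relation}. With positivity in hand, the integration by parts above is legitimate, and no step is a real obstacle.
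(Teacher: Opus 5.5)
Your proposal is correct and matches the paper's proof: substitute $\alpha=\sigma_{ss}$ into Proposition~\ref{evo}\,(v) and (vi), note that $\oint\sigma_{ss}\,ds=0$, and integrate by parts to get $6\oint\sigma^{-4}\sigma_s^2\,ds\ge 0$. Your remark on why $\sigma>0$ (origin symmetry keeps the origin inside the convex curve, so $h>0$ and $\sigma=h\,r^{1/3}>0$) is a harmless extra that the paper leaves implicit.
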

\begin{proof}
	For \eqref{main-flow1}, we have $\alpha=1-\sigma\mathcal{K}=\sigma_{ss}$. Substituting this into Proposition~\ref{evo}\,\textup{(v)}, we obtain
	\[
	A_t=-\oint\sigma_{ss}\,ds=0.
	\]
	Similarly, applying Proposition~\ref{evo}\,\textup{(vi)} and integrating by parts yields
	\begin{align}\label{BS-quantity}
		\left(\oint\frac{1}{\sigma^2}\,ds\right)_t
		=2\oint\sigma_{ss}\sigma^{-3}\,ds
		=6\oint\sigma^{-4}\sigma_s^2\,ds
		\ge 0,
	\end{align}
	which proves the claim.
\end{proof}

\section{Bounding the affine support function}\label{sec:sigma-bound}
In this section, we aim to establish uniform upper and strictly positive lower bounds for the affine support function \(\sigma\), which depend only on the initial curve \(\gamma_0\). We begin with the following lemma. 
\begin{lem}\label{inf-sup}
	If $\gamma$ is a smooth strictly convex curve enclosing area $A_0$, then there exists a point $x_0 \in \gamma$ such that
	\[
	\sigma(x_0) = \pi^{-\frac{2}{3}} A_0^{\frac{2}{3}}.
	\]
\end{lem}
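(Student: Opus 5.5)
The plan is to use the intermediate value theorem. Throughout, $\sigma$ is taken with respect to a point in the interior of $\gamma$ (the origin in our origin-symmetric setting), so $\sigma>0$ and $\sigma$ is continuous on the closed curve. It then suffices to prove
\[
\min_\gamma \sigma\;\le\;\pi^{-\frac23}A_0^{\frac23}\;\le\;\max_\gamma\sigma ,
\]
after which connectedness of $\gamma$ gives a point $x_0$ with $\sigma(x_0)=\pi^{-\frac23}A_0^{\frac23}$. Both sides are $\mathrm{SL}(2)$-invariant, and for a circle of radius $\rho$ one has $\sigma\equiv\rho^{4/3}=\pi^{-2/3}A_0^{2/3}$, so this is the right constant.

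\emph{Upper inequality.} By \eqref{length-area}, $2A_0=\oint\sigma\,ds\le \mathcal{L}\max\sigma$. The equi-affine isoperimetric inequality (Proposition~\ref{Iso ine}) gives $\mathcal{L}\le 2\pi^{2/3}A_0^{1/3}$. Combining these,
\[
\max\sigma\ \ge\ \frac{2A_0}{\mathcal{L}}\ \ge\ \pi^{-\frac23}A_0^{\frac23}.
\]

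\emph{Lower inequality (the main obstacle).} The standard inequalities all give bounds in the wrong direction here. Blaschke--Santal\'o (Proposition~\ref{BS ine}) bounds $\oint\sigma^{-2}\,ds$ from above. H\"older in the form $\mathcal{L}\le(\oint\sigma\,ds)^{2/3}(\oint\sigma^{-2}\,ds)^{1/3}$ bounds it from below only by $\mathcal{L}^3/(4A_0^2)$, which is too weak because $\mathcal{L}$ may be small. I would therefore argue pointwise via the Euclidean representation $\sigma=h\,r^{1/3}$ from \eqref{support-relation}, after normalizing by $\mathrm{SL}(2)$. Two routes seem available.

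\emph{First route (extremal ellipses).} Using $\mathrm{SL}(2)$-invariance, normalize the minimal-area origin-centred ellipse containing $\gamma$ (or the maximal one contained in it) to a circle, and evaluate $\sigma$ at a contact point, where $h$ equals the radius and $r$ is compared with the radius. The difficulty is that the curvature comparison at contact points again pushes the estimate the wrong way. I would therefore look for a different distinguished point, for instance one where $h r^{1/3}$ is compared with $h^{-2}$ through the identity $\oint\sigma^{-2}\,ds=\oint h^{-2}\,d\theta$.

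\emph{Second route (polar body).} Since $\oint h^{-2}\,d\theta=2A^\ast$, where $A^\ast$ is the area of the polar body, a lower bound on $A^\ast$ gives
\[
\mathcal{L}\,(\min\sigma)^{-2}\ \ge\ 2A^\ast .
\]
Combined with Proposition~\ref{Iso ine}, this turns into an upper bound on $\min\sigma$. The planar symmetric Mahler bound $A_0A^\ast\ge 8$ yields only $\min\sigma\le \pi^{1/3}8^{-1/2}A_0^{2/3}$, which misses the required constant $\pi^{-2/3}$. So this route needs a sharper lower bound on $\oint\sigma^{-2}\,ds$ tied to $\mathcal{L}$, and it is here that I expect most of the work to lie.
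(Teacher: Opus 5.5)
Your reduction to $\min\sigma\le\pi^{-2/3}A_0^{2/3}\le\max\sigma$ followed by the intermediate value theorem is fine. Your upper inequality $\max\sigma\ge 2A_0/\mathcal{L}\ge\pi^{-2/3}A_0^{2/3}$ is exactly the paper's first step.

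\textbf{The gap.} You never prove the lower inequality. Your first route stops at ``look for a different distinguished point'', and, as you note, contact points of the extremal ellipses give bounds in the wrong direction. Your own computation shows the polar-body route with the Mahler bound only yields $\min\sigma\le\pi^{1/3}8^{-1/2}A_0^{2/3}$, and $\pi^{1/3}8^{-1/2}\approx0.518>\pi^{-2/3}\approx0.466$. So the proposal proves only the easy half of the lemma. The sharp half needs an idea that is not present.

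\textbf{The missing idea.} The paper gets the lower inequality from a Sturm oscillation argument.
\begin{enumerate}[label=(\roman*)]
\item Suppose $\sigma>c:=\pi^{-2/3}A_0^{2/3}$ everywhere. In particular $\sigma>0$, so $d\ell=\sigma\,ds$ is a valid parameter, with total length $\oint\sigma\,ds=2A_0$.
\item For fixed $e\neq0$, the function $\mu=[\mathcal{T},e]$ satisfies $\mu_{ss}+\mathcal{K}\mu=0$ by the Frenet formulas. It vanishes at exactly two points, namely where $\mathcal{T}\parallel e$.
\item Set $f=\mu/\sigma$. The relation $\sigma_{ss}+\sigma\mathcal{K}=1$ gives $\sigma f_{ss}+2\sigma_sf_s+f=0$. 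In the parameter $\ell$ this becomes
\[
(\sigma^3 f_\ell)_\ell+f=0 .
\]
\item The comparison equation $(c^3v_\ell)_\ell+v=0$ has solutions $\sin\bigl((\ell-a)c^{-3/2}\bigr)$, whose zeros are spaced exactly $\pi c^{3/2}=A_0$ apart.
\item Since $\sigma^3>c^3$, the Sturm comparison theorem forces consecutive zeros of $f$ to be more than $A_0$ apart. The two arcs between the two zeros of $f$ then have total $\ell$-length greater than $2A_0$, a contradiction.
\end{enumerate}

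This argument uses the pointwise Minkowski relation $\sigma_{ss}+\sigma\mathcal{K}=1$ through an ODE rather than an integral inequality. It needs no extremal-ellipse or polar-body input, and it is what delivers the sharp constant.
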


\begin{proof}
	By Proposition~\ref{Iso ine} and \eqref{length-area}, we have
	\begin{align*}
		\sup \sigma 
		\ge \frac{\oint \sigma \, ds}{\mathcal{L}}
		= \frac{2A_0}{\mathcal{L}}
		\ge \pi^{-\frac{2}{3}} A_0^{\frac{2}{3}}.
	\end{align*}
	Suppose, for contradiction, that \(\sigma(x) > \pi^{-\frac{2}{3}} A_0^{\frac{2}{3}}\) for all \(x \in \gamma\).  
	Let $\mu = [\mathcal{T}, e]$ for a fixed nonzero vector $e \in \mathbb{R}^2$. Then $\mu$ vanishes precisely at the two points on $\gamma$ where $\mathcal{T}$ is parallel to $e$. A straightforward computation shows that
	\begin{equation}\label{mu}
		\mu_{ss} + \mathcal{K}\mu = 0.
	\end{equation}
	Let $f = \frac{\mu}{\sigma}$. Using \eqref{M-relation} and \eqref{mu}, we compute
	\begin{align*}
		f_{ss}
		&= \frac{\mu_{ss}}{\sigma}
		- \frac{f\,\sigma_{ss}}{\sigma}
		- \frac{2\sigma_s}{\sigma} f_s \\
		&= -\frac{f}{\sigma}
		- \frac{2\sigma_s}{\sigma} f_s.
	\end{align*}
	Introduce the parameter $d\ell = \sigma\, ds$. Then $\oint d\ell = 2A_0$, and
	\begin{align*}
		f_{ss} = \sigma^2 f_{\ell\ell} + \sigma \sigma_\ell f_\ell,
		\qquad 
		\sigma_s = \sigma \sigma_\ell,
		\qquad 
		f_s = \sigma f_\ell.
	\end{align*}
	Combining these identities yields
	\begin{equation*}
		\frac{\partial}{\partial \ell}
		\left( \sigma^3 \frac{\partial f}{\partial \ell} \right)
		+ f = 0.
	\end{equation*}
	If $\sigma > \pi^{-\frac{2}{3}} A_0^{\frac{2}{3}}$ everywhere, then by the Sturm comparison theorem (see \cite{Har64}), the distance in the $\ell$-parameter between two consecutive zeros of $f$ is strictly larger than $A_0$. Since $f$ has exactly two zeros, this implies that
	\[
	\oint d\ell > 2A_0,
	\]
	which contradicts $\oint d\ell = 2A_0$. The lemma follows.
\end{proof}

We now proceed to derive a priori estimates. 
Throughout this section, we assume that 
\(\gamma \in C^\infty(S^1 \times [0,T))\) is a strictly convex, origin-symmetric solution to the flow \eqref{main-flow1}. 
By Proposition~\ref{monotone}, the enclosed area remains equal to \(A_0\). 
In particular, Lemma~\ref{inf-sup} provides a normalization point for \(\sigma\), which will be used to control its oscillation.

\begin{prop}\label{bound-sigma-intergal}
	Let $\{\gamma(\cdot,t)\}_{t\in[0,T)}$ be a solution to \eqref{main-flow1}. Then 
	\[
	\frac{1}{
		\frac{\sqrt{2}}{2}\,\pi A_0^{-\frac{1}{2}}
		\left(\oint \frac{\sigma_s^2}{\sigma^2}\,ds\right)^{\frac{1}{2}}
		+ \pi^{\frac{2}{3}} A_0^{-\frac{2}{3}}
	}
	\le
	\sigma
	\le
	\left(
	\frac{\sqrt{2}}{4} A_0^{\frac{1}{2}}
	\left(\oint \frac{\sigma_s^2}{\sigma^2}\,ds\right)^{\frac{1}{2}}
	+ \pi^{-\frac{1}{3}} A_0^{\frac{1}{3}}
	\right)^2,
	\]
	on $S^1\times[0,T)$.
\end{prop}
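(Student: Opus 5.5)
The plan is to combine the normalization point from Lemma~\ref{inf-sup} with a one-dimensional fundamental-theorem-of-calculus estimate along the curve, applied to a suitable power of $\sigma$. Origin symmetry (Proposition~\ref{symmetric}) will let us integrate only over half of the curve. Write $I:=\oint \sigma_s^2\sigma^{-2}\,ds$ and fix $t\in[0,T)$.

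\emph{Use of symmetry.} Since $\gamma(\cdot,t)$ is origin-symmetric, the map $\gamma\mapsto-\gamma$ preserves $\sigma=[\gamma,\gamma_s]$ and the affine arc length. Hence $\sigma$, $\sigma_s^2/\sigma^2$ and $\sigma^{-2}$ all take equal values at antipodal points. It follows that every closed arc $\Gamma$ of affine length $\mathcal{L}/2$ between antipodal points satisfies
\[
\int_\Gamma \frac{\sigma_s^2}{\sigma^2}\,ds=\frac I2,
\qquad
\int_\Gamma \sigma\,ds=A_0,
\qquad
\int_\Gamma \sigma^{-2}\,ds=\frac12\oint\sigma^{-2}\,ds,
\]
where the middle identity uses \eqref{length-area}. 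Let $x_0$ be the point from Lemma~\ref{inf-sup}, so $\sigma(x_0)=\pi^{-2/3}A_0^{2/3}$. For any point $x$, one of the two arcs joining $x_0$ to $x$ is contained in a half-arc $\Gamma$ starting at $x_0$. So we may integrate from $x_0$ to $x$ over a subset of such a $\Gamma$.

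\emph{Upper bound.} Since $(\sqrt\sigma)_s=\frac{\sigma_s}{\sigma}\cdot\frac{\sqrt\sigma}{2}$, Cauchy--Schwarz on $\Gamma$ gives
\[
\bigl|\sqrt{\sigma(x)}-\sqrt{\sigma(x_0)}\bigr|\le\frac12\Bigl(\frac I2\Bigr)^{1/2}A_0^{1/2}=\frac{\sqrt2}{4}A_0^{1/2}I^{1/2}.
\]
Adding $\sqrt{\sigma(x_0)}=\pi^{-1/3}A_0^{1/3}$ and squaring yields exactly the stated upper bound.

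\emph{Lower bound.} Here we use $(1/\sigma)_s=-\frac{\sigma_s}{\sigma}\cdot\sigma^{-1}$. Cauchy--Schwarz gives
\[
\bigl|\sigma(x)^{-1}-\sigma(x_0)^{-1}\bigr|\le \Bigl(\frac I2\Bigr)^{1/2}\Bigl(\frac12\oint\sigma^{-2}\,ds\Bigr)^{1/2}.
\]
To bound the last factor we invoke Proposition~\ref{BS ine}. This is the only delicate step: the Blaschke--Santal\'o inequality holds with respect to a particular point $p_0$, whereas our $\sigma$ is computed with respect to the origin. I would argue that for a centrally symmetric convex body the Santal\'o point is the center of symmetry. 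Indeed, the Santal\'o point is the unique minimizer of the strictly convex functional $p\mapsto\oint\sigma_p^{-2}\,ds$, and this functional is invariant under $p\mapsto-p$. Hence $p_0=0$, and $\oint\sigma^{-2}\,ds\le 2\pi^2/A_0$.

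Substituting this bound gives
\[
\bigl|\sigma^{-1}-\sigma(x_0)^{-1}\bigr|\le\frac{\sqrt2}{2}\pi A_0^{-1/2}I^{1/2}.
\]
Therefore $\sigma^{-1}\le \frac{\sqrt2}{2}\pi A_0^{-1/2}I^{1/2}+\pi^{2/3}A_0^{-2/3}$. Inverting yields the stated lower bound, uniformly on $S^1\times[0,T)$.
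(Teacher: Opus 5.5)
Your proof is correct and is essentially the paper's argument: Cauchy--Schwarz for $\sigma^{1/2}$ and $\sigma^{-1}$ against the weights $\sigma\,ds$ (total mass $2A_0$) and $\sigma^{-2}\,ds$ (bounded by Blaschke--Santal\'o), anchored at the point from Lemma~\ref{inf-sup}, which yields the same constants. The only difference is how the factor $\frac12$ arises: the paper uses the symmetry-free fact that on a closed curve the oscillation is at most half the total variation, while you use origin symmetry to integrate over a half-arc; your explicit check that the Santal\'o point is the origin supplies a step the paper leaves implicit.
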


\begin{proof}
	For $p\in\mathbb{R}$ to be specified later, define a weighted length element $d\xi = \sigma^p\, ds$. A straightforward computation yields
	\begin{equation*}
		\oint \left(\sigma^{\frac{p}{2}}\right)_\xi^2 \, d\xi
		= \frac{p^2}{4} \oint \frac{\sigma_s^2}{\sigma^2}\, ds.
	\end{equation*}
	Since
	\begin{align*}
		\text{osc}(\sigma^{\frac{p}{2}})\le\frac{1}{2}\left(\oint d\xi\right)^{\frac{1}{2}}\left(\oint \left(\sigma^{\frac{p}{2}}\right)_\xi^2 \, d\xi\right)^{\frac{1}{2}}.
	\end{align*}
	It follows that
	\begin{align*}
		\sup \left(\sigma^{\frac{p}{2}}\right) - \inf \left(\sigma^{\frac{p}{2}}\right)
		\le\frac{|p|}{4}\left(\oint d\xi\right)^{\frac{1}{2}}
		\left(\oint \frac{\sigma_s^2}{\sigma^2}\, ds\right)^\frac{1}{2}.
	\end{align*}
	We first choose $p=1$ in order to obtain an upper bound for \(\sigma\). In this case, $\oint d\xi = \oint \sigma\, ds = 2A_0$, and Lemma~\ref{inf-sup} implies that
	\[
	\sup \left(\sigma^{\frac{1}{2}}\right) - \pi^{-\frac{1}{3}} A_0^{\frac{1}{3}}
	\le \frac{\sqrt{2}}{4} A_0^{\frac{1}{2}}
	\left(\oint \frac{\sigma_s^2}{\sigma^2}\, ds\right)^\frac{1}{2}.
	\]
	This yields the upper bound
	\begin{align}\label{upper}
		\sigma \le \left(
		\frac{\sqrt{2}}{4}A_0^{\frac{1}{2}}
		\left(\oint \frac{\sigma_s^2}{\sigma^2}\, ds\right)^\frac{1}{2}
		+ \pi^{-\frac{1}{3}} A_0^{\frac{1}{3}}
		\right)^2.
	\end{align}
	Next, we choose $p=-2$ to derive a lower bound for $\sigma$. 
	By Proposition~\ref{BS ine}, we have
	\[
	\oint d\xi = \oint\frac{1}{\sigma^2}\, ds \le 2\pi^2 A_0^{-1}.
	\]
	Applying Lemma~\ref{inf-sup} again yields
	\[
	\sup (\sigma^{-1}) - \pi^{\frac{2}{3}} A_0^{-\frac{2}{3}}
	\le \frac{\sqrt{2}}{2}\,\pi A_0^{-\frac{1}{2}}
	\left(\oint \frac{\sigma_s^2}{\sigma^2}\, ds\right)^\frac{1}{2}.
	\]
	Hence, we get the desired lower bound
	\begin{align}\label{lower}
		\sigma \ge 
		\frac{1}{
			\frac{\sqrt{2}}{2}\,\pi A_0^{-\frac{1}{2}}
			\left(\oint \frac{\sigma_s^2}{\sigma^2}\, ds\right)^\frac{1}{2}
			+ \pi^{\frac{2}{3}} A_0^{-\frac{2}{3}}
		}.
	\end{align}
	Combining \eqref{upper} and \eqref{lower} completes the proof.
\end{proof}

To relate the integral quantity \(\oint \frac{\sigma_s^2}{\sigma^2} ds\) to a more geometric expression, we introduce
\begin{align*}
	\mathscr{L}=1-\frac{1}{2\pi^{\frac{4}{3}}}A^{\frac{1}{3}}\oint\mathcal{K}ds.
\end{align*}
By Proposition~\ref{AF ine} and Proposition~\ref{Iso ine}, the quantity \(\mathscr{L}\) is nonnegative.
Observe that
\begin{align}\label{totalcur}
	\oint \frac{\sigma_s^2}{\sigma^2}\, ds
	= \oint \frac{1}{\sigma}\, ds - \oint \mathcal{K}\, ds.
\end{align}
The first term on the right-hand side can be bounded in terms of \(A\). Indeed, by Hölder's inequality together with Propositions~\ref{Iso ine} and~\ref{BS ine}, we have
\begin{align}\label{1/sigma}
	\oint \frac{1}{\sigma}\, ds
	\le \mathcal{L}^{\frac{1}{2}}
	\left(\oint \frac{1}{\sigma^2}\, ds\right)^{\frac{1}{2}}
	\le 2\pi^{\frac{4}{3}} A^{-\frac{1}{3}}.
\end{align}
Combining \eqref{totalcur} and \eqref{1/sigma}, we obtain
\begin{align}\label{relate-l}
	\oint\frac{\sigma_s^2}{\sigma^2}\,ds
	\le 2\pi^{\frac{4}{3}} A^{-\frac{1}{3}}-\oint\mathcal{K}\,ds
	=2\pi^{\frac{4}{3}}A^{-\frac{1}{3}}\mathscr{L}.
\end{align}
As a consequence, this yields the following bounds for the affine support function.
\begin{cor}\label{Lsigma-bound}
	Let $\{\gamma(\cdot,t)\}_{t\in[0,T)}$ be a solution to \eqref{main-flow1}. Then
	\[
	\frac{1}{\pi^{\frac{5}{3}}A_0^{-\frac{2}{3}}\mathscr{L}^{\frac{1}{2}}
		+ \pi^{\frac{2}{3}} A_0^{-\frac{2}{3}}
	}
	\le
	\sigma
	\le
	\left(
	\frac{1}{2}A_0^{\frac{1}{3}}\pi^{\frac{2}{3}}\mathscr{L}^{\frac{1}{2}}	
	+ \pi^{-\frac{1}{3}} A_0^{\frac{1}{3}}
	\right)^2
	\]
	on $S^1\times[0,T)$.
\end{cor}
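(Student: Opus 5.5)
The corollary follows by substituting the bound \eqref{relate-l} into Proposition~\ref{bound-sigma-intergal}. Since the enclosed area is preserved along the flow (Proposition~\ref{monotone}), we have $A=A_0$ for all $t\in[0,T)$. Hence \eqref{relate-l} gives, at every time,
\[
\left(\oint\frac{\sigma_s^2}{\sigma^2}\,ds\right)^{\frac12}\le \sqrt{2}\,\pi^{\frac23}A_0^{-\frac16}\mathscr{L}^{\frac12}.
\]
Here $\mathscr{L}\ge 0$ by Propositions~\ref{AF ine} and~\ref{Iso ine}, so the square root makes sense. Both bounds in Proposition~\ref{bound-sigma-intergal} are monotone in $\oint\sigma_s^2\sigma^{-2}\,ds$: the upper bound increases with it, and the lower bound decreases with it because this quantity sits in the denominator. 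So replacing it by the larger quantity $2\pi^{\frac43}A_0^{-\frac13}\mathscr{L}$ keeps both inequalities valid.

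It remains to check the constants. For the upper bound,
\[
\frac{\sqrt2}{4}A_0^{\frac12}\cdot\sqrt2\,\pi^{\frac23}A_0^{-\frac16}\mathscr{L}^{\frac12}=\frac12\pi^{\frac23}A_0^{\frac13}\mathscr{L}^{\frac12},
\]
which is the stated term. For the lower bound,
\[
\frac{\sqrt2}{2}\pi A_0^{-\frac12}\cdot\sqrt2\,\pi^{\frac23}A_0^{-\frac16}\mathscr{L}^{\frac12}=\pi^{\frac53}A_0^{-\frac23}\mathscr{L}^{\frac12},
\]
which is again the stated term. The additive constants $\pi^{-\frac13}A_0^{\frac13}$ and $\pi^{\frac23}A_0^{-\frac23}$ carry over unchanged.

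Every step here is bookkeeping; the only point needing care is that \eqref{relate-l} is applied pointwise in $t$ with $A=A_0$. The argument uses Propositions~\ref{Iso ine} and~\ref{BS ine}, as \eqref{1/sigma} does, and origin symmetry is what makes them applicable. The Blaschke--Santal\'o inequality holds for $\sigma$ measured from the Santal\'o point. For an origin-symmetric curve this point is the origin, so the $\sigma$ in the flow is the right one. This symmetry is preserved along the flow by Proposition~\ref{symmetric}.
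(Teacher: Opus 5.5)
Your proposal is correct and matches the paper's argument: you insert \eqref{relate-l} with $A=A_0$ into Proposition~\ref{bound-sigma-intergal}, use that both bounds are monotone in $\oint\sigma_s^2\sigma^{-2}\,ds$, and your constant computations agree with the paper's. Your remark that origin symmetry places the Santal\'o point at the origin is also correct, and it justifies a point the paper leaves implicit: it is what lets Proposition~\ref{BS ine} apply to the $\sigma$ of the flow.
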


This corollary shows that uniform bounds for the affine support function \(\sigma\) reduce to controlling the quantity \(\mathscr{L}(t)\). We now establish such a bound.

\begin{prop}\label{l-bound}
	Let $\{\gamma(\cdot,t)\}_{t\in[0,T)}$ be a solution to \eqref{main-flow1}. Then there exists a constant $B = B(\gamma_0) > 0$ such that
	\[
	\mathscr{L}(t) < \mathscr{L}(0) + \frac{12}{11}B
	\]
	for all $t \in [0,T)$.
\end{prop}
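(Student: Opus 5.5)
The plan is to control $\mathscr{L}(t)$ through the time derivative of $\oint\mathcal{K}\,ds$. Since $A\equiv A_0$ by Proposition~\ref{monotone},
\[
\frac{d}{dt}\mathscr{L}=-\frac{1}{2\pi^{\frac43}}A_0^{\frac13}\,\frac{d}{dt}\oint\mathcal{K}\,ds,
\]
so an upper bound on $\mathscr{L}$ is the same as a lower bound on how fast $\oint\mathcal{K}\,ds$ can decrease. By Proposition~\ref{evo}\,(vii) with $\alpha=\sigma_{ss}$,
\[
\Bigl(\oint\mathcal{K}\,ds\Bigr)_t=\frac23\oint\sigma_{ss}\bigl(\mathcal{K}_{ss}+\mathcal{K}^2\bigr)\,ds .
\]
I would eliminate $\mathcal{K}$ using \eqref{M-relation}, writing $\mathcal{K}=(1-\sigma_{ss})/\sigma$. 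Every term is then expressed through $\sigma$ and its $s$-derivatives, with negative powers of $\sigma$.

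Next I would integrate by parts. The goal is to split the right-hand side into a signed main part $\oint \sigma^{-1}\sigma_{ssss}\sigma_{ss}\,ds=-\oint\sigma^{-1}\sigma_{sss}^2\,ds+\cdots$ (note $\oint\sigma_{ss}\mathcal{K}_{ss}\,ds=\oint\sigma_{ssss}\mathcal{K}\,ds$) plus lower-order error terms. The error terms involve $\sigma_{ss}^2/\sigma^2$, $\sigma_s\sigma_{ss}\sigma_{sss}/\sigma^2$, $\sigma_s^2\sigma_{ss}/\sigma^3$, and so on. Because the top-order term may carry the wrong sign for this purpose, the actual target inequality is
\[
\Bigl(\oint\mathcal{K}\,ds\Bigr)_t\ \ge\ -C\oint\frac{\sigma_s^2}{\sigma^4}\,ds-\varepsilon\,(\text{good terms}),
\]
with $C$ depending only on upper and lower bounds for $\sigma$. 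I would obtain it by applying Young's inequality to each mixed term, and by using $\sigma_{ss}=1-\sigma\mathcal{K}$ to convert pure $\sigma_{ss}$ terms back into $\mathcal{K}$ where that is favourable.

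The key point is that the error term is exactly what \eqref{BS-quantity} integrates. That identity, combined with Proposition~\ref{BS ine} (the Santal\'o point is the origin by symmetry, Proposition~\ref{symmetric}), gives
\[
6\int_0^t\oint\sigma^{-4}\sigma_s^2\,ds\,d\tau\le 2\pi^2A_0^{-1}-\oint\sigma^{-2}\,ds\Big|_{t=0}.
\]
This quantity is finite and depends only on $\gamma_0$. Integrating the differential inequality in time then gives $\mathscr{L}(t)\le \mathscr{L}(0)+c\,C\cdot(\text{that constant})$.

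The circularity is handled by a continuity argument. The constant $C$ depends on the bounds for $\sigma$, and Corollary~\ref{Lsigma-bound} gives those in terms of $\mathscr{L}$. So let $t^*$ be the supremum of the times on which $\mathscr{L}\le\mathscr{L}(0)+\frac{12}{11}B$. On $[0,t^*)$, $\sigma$ is pinched by constants depending only on $\gamma_0$, and $B$ is chosen accordingly. The integrated estimate then yields the strictly smaller bound $\mathscr{L}\le\mathscr{L}(0)+B$, or a bound with an absorbed fraction; I expect the factor $\frac{12}{11}$ to come from such an absorption, for example from $\varepsilon=1/12$ in Young's inequality. This contradicts $t^*<T$. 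The main obstacle is the integration-by-parts bookkeeping in the second step: one must show that every third- and second-order term either has a good sign or is bounded by $\oint\sigma^{-4}\sigma_s^2\,ds$. If the top-order terms do not close, I would add a multiple of $\oint\sigma^{-2}\,ds$ to $\oint\mathcal{K}\,ds$ and look for a monotone combination instead.
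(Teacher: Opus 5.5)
Your outline agrees with the paper's. The error in $(\oint\mathcal{K}\,ds)_t$ is paid for by the Blaschke--Santal\'o monotone quantity \eqref{BS-quantity}. Your fallback of adding a multiple of $\oint\sigma^{-2}\,ds$ is exactly the paper's $\mathscr{Q}=\mathscr{L}+B\mathscr{M}$, which is the differential form of your time integration.

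\textbf{The gap.} It lies in the step you treat as soft: ``$\sigma$ is pinched by constants depending only on $\gamma_0$, and $B$ is chosen accordingly.'' On $[0,t^*)$ the pinching comes from Corollary~\ref{Lsigma-bound} with $\mathscr{L}\le\Lambda:=\mathscr{L}(0)+\frac{12}{11}B$, so it depends on $B$. Your integrated estimate is
\[
\mathscr{L}(t)-\mathscr{L}(0)\le c\,C(\Lambda)\,\tfrac{\pi^2}{3A_0}\mathscr{M}(0).
\]
The bootstrap closes only if this is $<\Lambda-\mathscr{L}(0)$ for some $\Lambda$. That requires $C(\Lambda)$ to grow at most linearly in $\Lambda$, with a small enough slope.

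Now $\sup\sigma\lesssim\mathscr{L}$ while $(\inf\sigma)^{-1}\lesssim\mathscr{L}^{1/2}$. A constant produced by routine absorption using both bounds grows superlinearly. An example is estimating $\oint\sigma^{-2}\sigma_{ss}^2\,ds$ through $(\inf\sigma)^{-2}\oint\sigma_{ss}^2\,ds$ and interpolation. Then no admissible $B$ exists for general $\gamma_0$ without a smallness assumption on the initial curve, which is precisely what the theorem avoids. So the real difficulty is not the qualitative bookkeeping you flag. It is the size of the constant and how it depends on $\sigma$.

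\textbf{What the paper supplies.} Write $u=-\log\sigma$. Then $(\oint\mathcal{K}\,ds)_t$ splits into three groups:
\begin{itemize}
\item weighted terms $\oint e^{-u}(\cdots)\,ds$ of total derivative order six, led by the favourable $\frac23\oint e^{-u}u_{sss}^2\,ds$ (so the top-order sign is fine);
\item the unfavourable terms $-\frac23\oint u_s^4\,ds-2\oint u_{ss}^2\,ds$;
\item the favourable term $\frac43\oint e^{u}u_s^2\,ds$.
\end{itemize}
Young's inequality with the weights $e^{\pm u/2}$, taking $\epsilon_1=\frac53$, $\epsilon_2=\frac15$, $\epsilon_3=\frac{11}{60}$, absorbs everything without using $\sup\sigma$ or $\inf\sigma$. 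This gives
\[
\Bigl(\oint\mathcal{K}\,ds\Bigr)_t\ge-\frac{49}{22}\oint\sigma^{-3}\sigma_s^2\,ds
\]
with an absolute constant. Only then is $\sigma\le C(\mathscr{L})$ used, once, to pass to $\oint\sigma^{-4}\sigma_s^2\,ds$.

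In your integrated form this gives exactly
\[
\mathscr{L}(t)-\mathscr{L}(0)\le\frac{49}{528}\bigl(\pi\Lambda^{\frac12}+2\bigr)^2\mathscr{M}(0).
\]
Its slope in $\Lambda$ tends to $\frac{49\pi^2}{528}\mathscr{M}(0)\le 0.916<1$, so a large $\Lambda$ does close the bootstrap. This is the same numerical fact behind the paper's condition $\frac{147}{22}<\frac{66}{\pi^2}$.

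\textbf{The factor $\frac{12}{11}$.} It does not come from an $\varepsilon$ in Young's inequality. It is the bootstrap threshold, which must exceed $1$, so that $\mathscr{Q}(\bar t)\le\mathscr{Q}(0)\le\mathscr{L}(0)+B$ contradicts $\mathscr{L}(\bar t)=\mathscr{L}(0)+\frac{12}{11}B$. It must also stay below $\frac{528}{49\pi^2}\approx1.092$.
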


\begin{proof}
	Let
	\[
	\mathscr{M}=1-\frac{A}{2\pi^2}\oint\frac{1}{\sigma^2}ds.
	\]
	By Propositions~\ref{BS ine} and \ref{monotone}, it follows that
	\begin{align*}
	0\le \mathscr{M}(t)\le \mathscr{M}(0)\le 1.
	\end{align*}
	We introduce the auxiliary quantity
	\[
	\mathscr{Q}(t)=\mathscr{L}(t)+B\mathscr{M}(t),
	\]
	where $B>0$ is a constant to be determined later. Then
	\[
	\mathscr{L}(t)\le \mathscr{Q}(t)\le \mathscr{L}(t)+B.
	\]
	For convenience, set $u=-\log\sigma$. Then
	\[
	\sigma_s=-e^{-u}u_s,\quad
	\sigma_{ss}=-e^{-u}u_{ss}+e^{-u}u_s^2,\quad
	\mathcal{K}=e^u+u_{ss}-u_s^2.
	\]
	By Proposition~\ref{evo}, we compute
	\begin{align*}
		\left(\oint \mathcal{K}\,ds\right)_t
		&=\frac{2}{3}\oint e^{-u}u_{sss}^2ds
		+\frac{10}{3}\oint e^{-u}u_s^2u_{ss}^2ds
		-2\oint e^{-u}u_su_{ss}u_{sss}ds\\
		&\quad+\frac{2}{15}\oint e^{-u}u_s^6ds
		-\frac{2}{3}\oint u_s^4ds
		-2\oint u_{ss}^2ds
		+\frac{4}{3}\oint e^uu_s^2ds,
	\end{align*}
	and
	\[
	\left(\oint\frac{1}{\sigma^2}\,ds\right)_t
	=6\oint e^{2u}u_s^2ds.
	\]
	Combining the above identities, we obtain
	\begin{align*}
		\mathscr{Q}_t
		&=\frac{A_0^{\frac{1}{3}}}{6\pi^{\frac{4}{3}}}
		\Bigl(
		-2\oint e^{-u}u_{sss}^2ds
		-10\oint e^{-u}u_s^2u_{ss}^2ds
		+6\oint e^{-u}u_su_{ss}u_{sss}ds\\
		&\quad-\frac{2}{5}\oint e^{-u}u_s^6ds
		+2\oint u_s^4ds
		+6\oint u_{ss}^2ds
		-4\oint e^uu_s^2ds\\
		&\quad-\frac{18A_0^{\frac{2}{3}}B}{\pi^{\frac{2}{3}}}\oint e^{2u}u_s^2ds
		\Bigr).
	\end{align*}
	Using the lower bound in Corollary~\ref{Lsigma-bound}
	\begin{align}\label{C(L)}
	e^u \ge \frac{1}{\left(
	\frac{1}{2}A_0^{\frac{1}{3}}\pi^{\frac{2}{3}}\mathscr{L}^{\frac{1}{2}}
	+ \pi^{-\frac{1}{3}} A_0^{\frac{1}{3}}
	\right)^2}
	=: \frac{1}{C(\mathscr{L})},
	\end{align}
	we deduce that
	\begin{align*}
		\mathscr{Q}_t
		&\le \frac{A_0^{\frac{1}{3}}}{6\pi^{\frac{4}{3}}}
		\Bigl(
		-2\oint e^{-u}u_{sss}^2ds
		-10\oint e^{-u}u_s^2u_{ss}^2ds
		+6\oint e^{-u}u_su_{ss}u_{sss}ds\\
		&\quad-\frac{2}{5}\oint e^{-u}u_s^6ds
		+2\oint u_s^4ds
		+6\oint u_{ss}^2ds
		-4\oint e^uu_s^2ds\\
		&\quad-\frac{18A_0^{\frac{2}{3}}B}{\pi^{\frac{2}{3}}C(\mathscr{L})}
		\oint e^{u}u_s^2ds
		\Bigr).
	\end{align*}
	Applying Cauchy--Schwarz and Young's inequalities, we estimate
	\begin{align*}
		6\oint e^{-u}u_su_{ss}u_{sss}ds
		&\le 6\epsilon_1\oint e^{-u}u_s^2u_{ss}^2ds
		+\frac{3}{2\epsilon_1}\oint e^{-u}u_{sss}^2ds,\\
		2\oint u_s^4ds
		&\le 2\epsilon_2\oint e^{-u}u_s^6ds
		+\frac{1}{2\epsilon_2}\oint e^uu_s^2ds,\\
		6\oint u_{ss}^2ds
		&\le 6\epsilon_3\oint e^{-u}u_{sss}^2ds
		+\frac{3}{2\epsilon_3}\oint e^uu_s^2ds.
	\end{align*}
	Choosing $\epsilon_1=\frac{5}{3}$, $\epsilon_2=\frac{1}{5}$, and $\epsilon_3=\frac{11}{60}$ yields
	\begin{align}\label{Q_t}
		\mathscr{Q}_t
		\le \frac{A_0^{\frac{1}{3}}}{6\pi^{\frac{4}{3}}}
		\left(
		\frac{5}{2}+\frac{90}{11}-4
		-\frac{18A_0^{\frac{2}{3}}B}{\pi^{\frac{2}{3}}C(\mathscr{L})}
		\right)
		\oint e^u u_s^2ds.
	\end{align}
	
	We now prove that
	\[
	\mathscr{L}(t) < \mathscr{L}(0) + \frac{12}{11}B
	\quad \text{for all } t \in [0,T).
	\]
	Assume, for contradiction, that there exists a first time $\bar t \in (0,T)$ such that
	\[
	\mathscr{L}(\bar{t}) = \mathscr{L}(0) + \frac{12}{11}B.
	\]
	Then $\mathscr{L}(t)\le \mathscr{L}(0)+\frac{12}{11}B$ for all $t\in[0,\bar t]$. From \eqref{C(L)} and \eqref{Q_t} we have
	\begin{align*}
		\mathscr{Q}_t
		\le \frac{A_0^{\frac{1}{3}}}{6\pi^{\frac{4}{3}}}
		\left(
		\frac{147}{22}
		-\frac{72B}{\pi^2\left(\mathscr{L}(0)+\frac{12}{11}B\right)
			+4+4\pi\left(\mathscr{L}(0)+\frac{12}{11}B\right)^{\frac{1}{2}}}
		\right)
		\oint e^u u_s^2ds
	\end{align*}
	for all $t\in[0,\bar t]$.
	Observing that
	\[
	\lim_{B\to\infty}
	\frac{72B}{\pi^2\left(\mathscr{L}(0)+\frac{12}{11}B\right)+4+4\pi\left(\mathscr{L}(0)+\frac{12}{11}B\right)^{\frac{1}{2}}}
	=\frac{66}{\pi^2},
	\]
	and that
	\[
	\frac{147}{22}-\frac{66}{\pi^2}<0.
	\]
	Hence, there exists $B>0$ such that
	\[
	\frac{72B}{\pi^2\left(\mathscr{L}(0)+\frac{12}{11}B\right)
		+4+4\pi\left(\mathscr{L}(0)+\frac{12}{11}B\right)^{\frac{1}{2}}}
	=\frac{147}{22}.
	\]
	In fact, $B$ can be solved explicitly as
	\begin{align*}
		B=\frac{11}{12}\left(\left(\frac{2\pi+\sqrt{4\pi^2+\left(4+\frac{484}{49}\mathscr{L}(0)\right)\left(\frac{484}{49}-\pi^2\right)}}{\frac{484}{49}-\pi^2}\right)^2-\mathscr{L}(0)\right).
	\end{align*}
	With this choice of $B$, we obtain $\mathscr{Q}_t\le 0$ on $[0,\bar t]$. It follows that
	\[
	\mathscr{Q}(\bar t)\le \mathscr{Q}(0)\le \mathscr{L}(0)+B,
	\]
	while
	\[
	\mathscr{Q}(\bar t)\ge \mathscr{L}(\bar t)
	=\mathscr{L}(0)+\frac{12}{11}B,
	\]
	which yields a contradiction. This completes the proof.
\end{proof}

Combining the previous estimates, we arrive at the desired uniform bounds for the affine support function.
\begin{cor}\label{bound-sigma}
	Let $\{\gamma(\cdot,t)\}_{t\in[0,T)}$ be a solution to \eqref{main-flow1}. Then
	\[
	\frac{1}{C}\le \sigma\le C,
	\]
	on $S^1 \times [0,T)$, where $C>0$ is a constant depending only on the initial curve $\gamma_0$.
\end{cor}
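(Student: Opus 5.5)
This corollary follows by feeding the uniform bound on $\mathscr{L}$ from Proposition~\ref{l-bound} into the pointwise bounds of Corollary~\ref{Lsigma-bound}. The plan has three steps.

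\textbf{Step 1: time-independent bound on $\mathscr{L}$.} Proposition~\ref{l-bound} gives, for all $t\in[0,T)$,
\[
0\le \mathscr{L}(t)<\mathscr{L}(0)+\tfrac{12}{11}B=:\Lambda .
\]
Here the lower bound $\mathscr{L}\ge 0$ comes from Propositions~\ref{AF ine} and \ref{Iso ine}. The quantity $\Lambda$ depends only on $\gamma_0$. Indeed, $B$ is given by the explicit formula in the proof of Proposition~\ref{l-bound} in terms of $\mathscr{L}(0)$. In turn, $\mathscr{L}(0)$ is determined by $A_0$ and $\oint\mathcal{K}\,ds$ evaluated on $\gamma_0$.

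\textbf{Step 2: insert into Corollary~\ref{Lsigma-bound}.} Both bounds there are monotone in $\mathscr{L}$. The upper bound $\bigl(\tfrac12A_0^{1/3}\pi^{2/3}\mathscr{L}^{1/2}+\pi^{-1/3}A_0^{1/3}\bigr)^2$ increases with $\mathscr{L}$. The lower bound $\bigl(\pi^{5/3}A_0^{-2/3}\mathscr{L}^{1/2}+\pi^{2/3}A_0^{-2/3}\bigr)^{-1}$ decreases with $\mathscr{L}$. Replacing $\mathscr{L}$ by $\Lambda$ therefore gives
\[
\sigma\le\bigl(\tfrac12A_0^{1/3}\pi^{2/3}\Lambda^{1/2}+\pi^{-1/3}A_0^{1/3}\bigr)^2,\qquad
\sigma\ge\bigl(\pi^{5/3}A_0^{-2/3}\Lambda^{1/2}+\pi^{2/3}A_0^{-2/3}\bigr)^{-1}.
\]
These hold on $S^1\times[0,T)$. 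By Proposition~\ref{monotone}, $A\equiv A_0$, so both bounds are finite, positive, and independent of $t$.

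\textbf{Step 3: single constant.} Take $C$ to be the maximum of the upper bound and of the reciprocal of the lower bound. Then $C^{-1}\le\sigma\le C$.

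There is no real obstacle; the only point to check is that $B$, and hence $\Lambda$, is independent of $T$, and this is visible from its explicit formula. The symmetry assumption has already entered through Proposition~\ref{symmetric}. It guarantees that the origin is the center of symmetry, so that the Blaschke--Santal\'o point in Proposition~\ref{BS ine} can be taken at the origin, which the previous estimates rely on.
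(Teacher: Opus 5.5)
Your proposal is correct and follows the paper's argument: the paper proves this corollary in one line by inserting the time-independent bound $\mathscr{L}(t)<\mathscr{L}(0)+\frac{12}{11}B$ from Proposition~\ref{l-bound} into the $\mathscr{L}$-dependent bounds of Corollary~\ref{Lsigma-bound}. The details you add are exactly what the paper leaves implicit: both bounds are monotone in $\mathscr{L}$, and $B$ depends only on $\mathscr{L}(0)$, hence only on $\gamma_0$ and not on $T$.
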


\section{Energy Estimates}\label{sec:energy-estimates}
In this section, we derive energy inequalities for smooth, strictly convex, origin-symmetric solutions of the flow \eqref{main-flow1}. 
Define the energy functional
\[
E(t) := \oint \sigma_s^2 \, ds.
\]
For computational convenience, we consider the following equivalent formulation of the flow \eqref{main-flow1}
\begin{align}\label{flow+tan}
	\frac{\partial \gamma}{\partial t} 
	= \sigma_{ss} \mathcal{N} - \sigma_{sss} \mathcal{T}, 
	\quad \gamma(\cdot,0) = \gamma_0.
\end{align}
That is, we take $\alpha = \sigma_{ss}$ and $\beta = -\alpha_s$ in \eqref{general-flow}. 
Since tangential components of the velocity affect only the parametrization, the flow \eqref{flow+tan} is equivalent to \eqref{main-flow1}.
By Proposition~\ref{evo}, the evolution equations under the flow \eqref{flow+tan} are given by
\begin{align*}\label{simga_t}
	\notag\frac{g_t}{g} 
	&= - \frac{2}{3}\sigma_{ssss}
	+ \frac{2}{3}\sigma^{-1}\sigma_{ss}^2  
	- \frac{2}{3}\sigma^{-1}\sigma_{ss}, \\
	\sigma_t 
	&= - \frac{1}{3}\sigma \sigma_{ssss} 
	+ \frac{1}{3}\sigma_{ss}^2 
	- \frac{4}{3}\sigma_{ss}.
\end{align*}
A direct computation yields
\begin{equation}\label{E-evo}
	\begin{aligned}
		\frac{dE}{dt}
		=&\oint\left(2\sigma_s\sigma_{st}+\sigma_s^2\frac{g_t}{g}\right)ds
		=\oint\left(2\sigma_s\sigma_{ts}-\sigma^2_s\frac{g_t}{g}\right)ds\\
		=&-\frac{2}{3}\oint\left(\sigma\sigma_s\sigma_{sssss}+\sigma_s^2\sigma_{ssss}-2\sigma_s\sigma_{ss}\sigma_{sss}+4\sigma_s\sigma_{sss}\right)ds\\
		&\quad+\frac{2}{3}\oint\left(\sigma_s^2\sigma_{ssss}-\sigma^{-1}\sigma_s^2\sigma_{ss}^2+\sigma^{-1}\sigma_s^2\sigma_{ss}\right)ds\\
		=& -\frac{2}{3}\oint \sigma \sigma_{sss}^2 \, ds
		+ \frac{1}{3}\oint \sigma_{ss}^3 \, ds
		- \frac{2}{3}\oint \sigma^{-1}\sigma_s^2 \sigma_{ss}^2 \, ds \\
		& \quad + \frac{2}{9}\oint \sigma^{-2}\sigma_s^4 \, ds
		+ \frac{8}{3}\oint \sigma_{ss}^2 \, ds.
	\end{aligned}
\end{equation}

We shall frequently use the interpolation inequality, which states that for a periodic function $ v $ with zero mean, the following inequality
\begin{equation*}\label{cz}
	\big\|v^{(j)}\big\|_{L^r}\le c\big\|v\big\|_{L^p}^{1-\lambda}\cdot\big\|v^{(k)}\big\|_{L^q}^\lambda, \quad \lambda\in(0,1)
\end{equation*}
is valid, where $ j,k,p,q $ and $ r$  satisfy $ p,q,r>1,\; j\ge0,  $
\begin{equation*}
	\frac{1}{r}=j+\lambda\left(\frac{1}{q}-k\right)+(1-\lambda)\frac{1}{p}, \qquad \frac{j}{k}\le \lambda\le 1,
\end{equation*}
and the constant $ c $ depends on $ j,k,p,q $  and $ r $ only.

We proceed to estimate the right-hand side of \eqref{E-evo}. By Corollary  \ref{bound-sigma}, we obtain
\begin{align}\label{0-order-0}
	\frac{dE}{dt} \le& -\frac{2}{3C}\oint \sigma_{sss}^2 \, ds
	+ \frac{1}{3}\oint \sigma_{ss}^3 \, ds 
	+ \frac{2C^2}{9}\oint \sigma_s^4 \, ds
	+ \frac{8}{3}\oint \sigma_{ss}^2 \, ds.
\end{align}
Applying Hölder's inequality, the interpolation inequality, and Young's inequality, we estimate each term as follows
\begin{equation}\label{0-order-1}
	\begin{aligned}
		\oint \sigma_{ss}^3 \, ds
		&\le \oint |\sigma_{ss}|^3 \, ds 
		\le C_1 \left(\oint \sigma_s^2 \, ds \right)^{\frac{5}{8}}
		\left(\oint \sigma_{sss}^2 \, ds \right)^{\frac{7}{8}} \\
		&\le C_1 \epsilon_1 \oint \sigma_{sss}^2 \, ds
		+ C_1 C(\epsilon_1)\left(\oint \sigma_s^2 \, ds \right)^5,
	\end{aligned}
\end{equation}
where $C(\epsilon_1)$ denotes a positive constant depending only on $\epsilon_1$.
Similarly,
\begin{equation}\label{0-order-2}
	\begin{aligned}
		\oint \sigma_s^4 \, ds
		&\le C_2 \left(\oint \sigma_s^2 \, ds \right)^{\frac{7}{4}}
		\left(\oint \sigma_{sss}^2 \, ds \right)^{\frac{1}{4}} \\
		&\le C_2 \epsilon_2 \oint \sigma_{sss}^2 \, ds
		+ C_2 C(\epsilon_2)\left(\oint \sigma_s^2 \, ds \right)^{\frac{7}{3}},
	\end{aligned}
\end{equation}
and
\begin{equation}\label{0-order-3}
	\begin{aligned}
		\oint \sigma_{ss}^2 \, ds
		&\le C_3 \left(\oint \sigma_s^2 \, ds \right)^{\frac{1}{2}}
		\left(\oint \sigma_{sss}^2 \, ds \right)^{\frac{1}{2}} \\
		&\le C_3 \epsilon_3 \oint \sigma_{sss}^2 \, ds
		+ C_3 C(\epsilon_3)\left(\oint \sigma_s^2 \, ds \right).
	\end{aligned}
\end{equation}
Substituting \eqref{0-order-1}--\eqref{0-order-3} into \eqref{0-order-0}, and choosing $\epsilon_1$, $\epsilon_2$, and $\epsilon_3$ sufficiently small, we obtain
\begin{align*}\label{0-order}
	\frac{dE}{dt} \le C\big(E + E^5\big),
\end{align*}
where $C$ depends only on the initial data and the constants in the interpolation inequalities.

Further, a more general energy inequality can be derived by an analogous argument. 
To this end, we introduce the notation
\[
f(\sigma)*\sigma_{s^{i_1}} * \sigma_{s^{i_2}} * \cdots * \sigma_{s^{i_m}},
\qquad
i_1 \le i_2 \le \cdots \le i_m
\]
to denote a generic term of the form
\[
K\, f(\sigma)\,\sigma_{s^{i_1}} \sigma_{s^{i_2}} \cdots \sigma_{s^{i_m}},
\]
where $K$ denotes a constant whose precise value is immaterial, and 
$f(\sigma)$ is a function of the form $f(\sigma)=\sigma^{-l}$ for some integer $l\ge0$. We want to establish the following energy inequality.

\begin{prop}\label{energy est.}
	For every integer $n \ge 1$, there exists a constant $C>0$ such that
	\begin{align}\label{m-order}
		\frac{d}{dt} \oint\sigma_{s^{n}}^2 \, ds
		\le C \bigl( E + E^{2n+3} \bigr),
	\end{align}
	where $C$ depends only on $n$, the initial data, and the constants in the interpolation inequalities.
\end{prop}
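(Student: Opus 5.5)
We follow the same scheme as the $n=1$ computation leading to \eqref{0-order-0}--\eqref{0-order-3}, working with the tangentially modified flow \eqref{flow+tan}. The starting point is the commutator identity
\[
\partial_t\partial_s=\partial_s\partial_t-\tfrac{g_t}{g}\partial_s,\qquad \frac{g_t}{g}=-\tfrac23\sigma_{ssss}+\tfrac23\sigma^{-1}\sigma_{ss}^2-\tfrac23\sigma^{-1}\sigma_{ss},
\]
which, applied $n$ times to $\sigma_t=-\frac13\sigma\sigma_{ssss}+\frac13\sigma_{ss}^2-\frac43\sigma_{ss}$, gives by induction
\[
\partial_t\sigma_{s^n}=-\tfrac13\sigma\,\sigma_{s^{n+4}}+\sum f(\sigma)*\sigma_{s^{i_1}}*\cdots*\sigma_{s^{i_m}},
\]
where every lower-order term has $i_j\ge1$ and $i_m\le n+3$. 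We also record the weighted order $\sum_j i_j$. For the quadratic terms (such as $\sigma_{ss}^2$ and $\sigma_{ssss}\sigma_{s^{n}}$-type products) this order is at most $n+4$. The linear term $\sigma_{s^{n+2}}$ has order $n+2$. Factors $\sigma^{-1}$ raise the number of factors without raising the derivative count, since $\sigma^{-1}$ is bounded by Corollary~\ref{bound-sigma}.

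Substituting into
\[
\frac{d}{dt}\oint\sigma_{s^n}^2ds=\oint\Bigl(2\sigma_{s^n}\partial_t\sigma_{s^n}+\sigma_{s^n}^2\tfrac{g_t}{g}\Bigr)ds
\]
and integrating the top-order term by parts twice gives
\[
-\tfrac23\oint\sigma\,\sigma_{s^{n+2}}^2ds
\]
plus terms of the same $*$-type. Every derivative of $\sigma$ produced by these integrations by parts carries at least one $s$-derivative, so it has order $\ge1$. Using $\sigma\ge 1/C$, the good term is at most $-\frac{2}{3C}\|\sigma_{s^{n+2}}\|_2^2$. We then integrate by parts in each remaining term so that no factor exceeds order $n+2$, with at most one factor of order $n+2$. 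Each remaining integrand is then a product $\sigma_{s^{i_1}}\cdots\sigma_{s^{i_m}}$ with $1\le i_j\le n+2$, total order $\sum i_j\le 2n+4$, and, among the top-order terms, $m\ge3$.

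Each factor is estimated by the interpolation inequality applied to $v=\sigma_s$, which has zero mean. This gives
\[
\|\sigma_{s^{i}}\|_{L^{m}}\le c\,\|\sigma_s\|_2^{1-\lambda_i}\,\|\sigma_{s^{n+2}}\|_2^{\lambda_i},\qquad \lambda_i=\frac{i-\frac32+\frac1m}{n+1}.
\]
Hölder then bounds each term by
\[
c\,E^{\frac{m-\Lambda}{2}}\|\sigma_{s^{n+2}}\|_2^{\Lambda},\qquad \Lambda=\sum_j\lambda_j=\frac{\sum i_j-\frac{3m}{2}+1}{n+1}.
\]
When $\sum i_j\le 2n+4$ and $m\ge3$, we get $\Lambda<2$. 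Young's inequality then absorbs $\epsilon\|\sigma_{s^{n+2}}\|_2^2$ into the good term, leaving
\[
C\,E^{\frac{m-\Lambda}{2-\Lambda}}.
\]
The quadratic terms ($m=2$) have order at most $2n+2$, which gives $\Lambda\le 2n/(n+1)<2$ and an output of $CE$. So these produce the linear part of \eqref{m-order}. Finally, the exponents $\frac{m-\Lambda}{2-\Lambda}$ lie between $1$ and their maximum, and intermediate powers of $E$ are bounded by $E+E^{2n+3}$.

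\textbf{Main obstacle.} The delicate step is the bookkeeping that bounds the exponent $\frac{m-\Lambda}{2-\Lambda}$ uniformly by $2n+3$. For fixed order this exponent grows with $m$, because terms with many low-order factors are worse; in the case $n=1$, $\oint\sigma_{ss}^3$ already produced $E^5$. So we must show that in $\partial_t\sigma_{s^n}$ the number of factors $m$ is controlled in terms of the order. This follows because every nonlinearity comes from $\frac{g_t}{g}$ and $\sigma_t$, which are at most quadratic in derivatives apart from bounded powers of $\sigma^{-1}$, and differentiating $\sigma^{-l}$ trades one derivative for one new factor $\sigma_s$. We would prove by induction the invariant $\sum i_j-m\ge$ (a fixed bound in terms of $n$). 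From it the worst case can be identified explicitly: it is a term with $2n+4$ total derivatives spread over the maximal number of factors, and it yields the exponent $2n+3$. The remaining terms give smaller powers.
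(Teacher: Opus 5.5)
\textbf{Verdict: there is a genuine gap.} Your overall scheme is the paper's: you derive the structure of $\partial_t\sigma_{s^n}$ by induction via the commutator, integrate by parts to isolate $-\frac23\oint\sigma\sigma_{s^{n+2}}^2\,ds$, and then apply H\"older, interpolation on the zero-mean function $\sigma_s$, and Young, as in Lemmas~\ref{lem1}--\ref{HiY ine}. The gap is in the step that is supposed to produce the power $2n+3$, which does not work as written.

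\textbf{The interpolation exponent is wrong.} With $v=\sigma_s$, $j=i-1$, $k=n+1$, $p=q=2$, $r=m$, the relation $\frac1r=j+\lambda(\frac1q-k)+(1-\lambda)\frac1p$ gives $\lambda_i=\frac{i-\frac12-\frac1m}{n+1}$, not $\frac{i-\frac32+\frac1m}{n+1}$. For $m\ge3$ your value is strictly below the admissible lower bound $\frac{j}{k}=\frac{i-1}{n+1}$, and it is even negative for $i=1$. So the inequality you invoke is false; test it on $\sigma_s=\cos(2\pi Ns/\mathcal{L})$ with $N\to\infty$. With your $\Lambda$, every top-order term would give $\frac{m-\Lambda}{2-\Lambda}=\frac{2n+5}{3}$. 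For $\oint\sigma_{ss}^3$ at $n=1$ that is $\frac73$ rather than $5$, which contradicts the $E^5$ of \eqref{0-order-1} that you cite yourself. Your final assertion that ``the worst case'' gives exactly $2n+3$ is therefore not derived from anything in your argument.

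\textbf{The ``main obstacle'' is misdiagnosed, and the invariant bounding $m$ is unnecessary.} With the correct $\Lambda=\frac{l-\frac m2-1}{n+1}$, where $l=\sum_j i_j$, one gets
\[ \frac{m-\Lambda}{2-\Lambda}=\frac{2(n+1)m-2l+m+2}{4(n+1)-2l+m+2}, \]
which is exactly the exponent of Lemma~\ref{HiY ine} with $k=n+1$. For $l=2n+4$ the numerator is $(2n+3)(m-2)$ and the denominator is $m-2$. So every top-order term with $m\ge3$ gives precisely $E^{2n+3}$, independently of $m$. More generally, whenever $\Lambda<2$, the exponent is $\ge1$ iff $m\ge2$ and $\le2n+3$ iff $l\le2n+4$. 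For $l=2n+2$ it equals $2n+3-\frac{8(n+1)}{m+2}$, which grows with $m$ but stays below $2n+3$; this is the paper's expression after shifting the factor count. Hence the only structural input you need is what you already have: $i_j\ge1$, $\sum_j i_j\le2n+4$, $m\ge3$ at top order, and boundedness of $f(\sigma)$ from Corollary~\ref{bound-sigma}.

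\textbf{A further technical caution,} which applies equally to the paper's proof of Lemma~\ref{HiY ine}. A factor $\sigma_{s^{n+2}}$ cannot be placed in $L^m$ with $m\ge3$, because the required $\lambda=1+\frac{1/2-1/m}{n+1}$ exceeds $1$. Instead, keep that factor in $L^2$, apply Cauchy--Schwarz, and interpolate the remaining $m-1$ factors (all of order $\le n+1$) in $L^{2(m-1)}$. By scaling this produces the same powers of $E$ and $\|\sigma_{s^{n+2}}\|_2$, so the bound $C(E+E^{2n+3})$ is unaffected.
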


The proof of Proposition \ref{energy est.} relies on the following three elementary lemmas.

\begin{lem}\label{lem1}
	For any integer $n \ge 1$, we have
	\begin{align*}
		\sigma_{s^{n}t}
		&=
		-\frac{1}{3}\sigma\sigma_{s^{n+4}}
		+\sigma_s*\sigma_{s^{n+3}}
		\\
		&\qquad
		+
		\sum_{\substack{
				i_1+\cdots+i_{m}=n+4\\
				1\le i_1\le\cdots \le i_{m}\le n+2\\
				2\le m\le n+2
		}}
		f(\sigma)\sigma_{s^{i_1}} * \cdots * \sigma_{s^{i_{m}}}
		\\
		&\qquad\quad
		+
		\sum_{\substack{
				i_1+\cdots+i_{m}=n+2\\
				1\le i_1\le\cdots \le i_{m}\le n+1\\
				2\le m\le n+1
		}}
		f(\sigma)\sigma_{s^{i_1}} * \cdots * \sigma_{s^{i_{m}}}
		-\frac{4}{3}\sigma_{s^{n+2}}.
	\end{align*}
\end{lem}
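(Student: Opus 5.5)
We argue by induction on $n$, starting from the evolution equations for the flow \eqref{flow+tan} recorded above:
\[
\sigma_t = -\tfrac13\sigma\sigma_{ssss}+\tfrac13\sigma_{ss}^2-\tfrac43\sigma_{ss},\qquad
\frac{g_t}{g} = -\tfrac23\sigma_{ssss}+\tfrac23\sigma^{-1}\sigma_{ss}^2-\tfrac23\sigma^{-1}\sigma_{ss}.
\]
The only tool is the commutation rule $\partial_t\partial_s=\partial_s\partial_t-\frac{g_t}{g}\partial_s$. Applied to $\sigma_{s^{n-1}}$ it gives
\[
\sigma_{s^n t}=\partial_s\bigl(\sigma_{s^{n-1}t}\bigr)-\frac{g_t}{g}\,\sigma_{s^n}.
\]
We also note that the derivative of $f(\sigma)=\sigma^{-l}$ is $-l\sigma^{-l-1}\sigma_s$, which is again of the form $f(\sigma)*\sigma_s$. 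Hence the class of terms $f(\sigma)\sigma_{s^{i_1}}*\cdots*\sigma_{s^{i_m}}$ is closed under $\partial_s$. Differentiation raises the total order $\sum i_j$ by one. When it falls on $f(\sigma)$ it also raises the number of factors $m$ by one, adding a factor $\sigma_s$ of order $1$.

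\textbf{Base case $n=1$.} Differentiating $\sigma_t$ once in $s$ gives
\[
-\tfrac13\sigma\sigma_{s^5}-\tfrac13\sigma_s\sigma_{s^4}+\tfrac23\sigma_{ss}\sigma_{sss}-\tfrac43\sigma_{sss}.
\]
Subtracting $\frac{g_t}{g}\sigma_s$ adds
\[
\tfrac23\sigma_s\sigma_{s^4}-\tfrac23\sigma^{-1}\sigma_s\sigma_{ss}^2+\tfrac23\sigma^{-1}\sigma_s\sigma_{ss}.
\]
Now sort these terms:
\begin{itemize}
\item $\sigma_s\sigma_{s^4}$ is the $\sigma_s*\sigma_{s^{n+3}}$ term.
\item $\sigma_{ss}\sigma_{sss}$ and $\sigma^{-1}\sigma_s\sigma_{ss}^2$ have total order $5=n+4$, with $m\in\{2,3\}=\{2,\dots,n+2\}$ and each $i_j\le 3=n+2$.
\item $\sigma^{-1}\sigma_s\sigma_{ss}$ has total order $3=n+2$, with $m=2$ and $i_j\le 2=n+1$.
\item The linear term is $-\frac43\sigma_{sss}$.
\end{itemize}
This matches the claimed form.

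\textbf{Inductive step.} Assume the formula holds for $n-1$ and apply the commutation identity. Each piece of $\partial_s(\sigma_{s^{n-1}t})$ behaves as follows.
\begin{itemize}
\item $\partial_s(-\frac13\sigma\sigma_{s^{n+3}})=-\frac13\sigma\sigma_{s^{n+4}}+\sigma_s*\sigma_{s^{n+3}}$.
\item $\partial_s(\sigma_s*\sigma_{s^{n+2}})$ gives $\sigma_s*\sigma_{s^{n+3}}$ plus $\sigma_{ss}*\sigma_{s^{n+2}}$. The latter has total order $n+4$, $m=2$ and top index $n+2$, so it lies in the first sum.
\item Differentiating a term of the first sum for $n-1$ (total order $n+3$, indices $\le n+1$, $m\le n+1$) gives total order $n+4$, indices $\le n+2$ and $m\le n+2$. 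The growth of $m$ happens only when $f$ is differentiated.
\item The second sum is handled the same way: total order $n+2$, indices $\le n+1$, $m\le n+1$.
\item The linear term gives $-\frac43\sigma_{s^{n+2}}$.
\end{itemize}

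Next, $-\frac{g_t}{g}\sigma_{s^n}$ contributes three kinds of terms:
\begin{itemize}
\item $\frac23\sigma_{s^4}\sigma_{s^n}$. For $n\ge2$ this has total order $n+4$ with both indices at most $n+2$, so it lies in the first sum (for $n=1$ it is the $\sigma_s*\sigma_{s^4}$ term).
\item $\sigma^{-1}\sigma_{ss}^2\sigma_{s^n}$, which has total order $n+4$ and $m=3\le n+2$.
\item $\sigma^{-1}\sigma_{ss}\sigma_{s^n}$, which has total order $n+2$ and $m=2$.
\end{itemize}

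\textbf{Main obstacle.} The only delicate point is the bookkeeping of the constraints on the sums. We must check that no factor of order $\ge n+3$ appears other than in the two explicitly displayed leading terms, and that the factor counts stay within $m\le n+2$ and $m\le n+1$. Both follow because the maximal index and $m$ each increase by at most one per differentiation. The base case verifies that the constraints hold initially.
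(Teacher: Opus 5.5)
Your proposal is correct and follows the paper's own proof: induction on $n$ via $\sigma_{s^n t}=\partial_s(\sigma_{s^{n-1}t})-\frac{g_t}{g}\sigma_{s^n}$, with the explicit $n=1$ computation as the base case and the same sorting of the resulting terms by total order, maximal index and number of factors. Stepping from $n-1$ to $n$ rather than from $n$ to $n+1$ makes no difference, and your check that $\frac23\sigma_{s^4}\sigma_{s^n}$ lands in the first sum for $n\ge 2$ (and in $\sigma_s*\sigma_{s^{n+3}}$ for $n=1$) makes explicit a point the paper leaves implicit.
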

\begin{proof}
	We argue by induction on $n$.
	For $n=1$, a direct computation yields
	\begin{align*}
		\sigma_{st}
		&=\sigma_{ts}-\frac{g_t}{g}\sigma_s\\
		&=-\frac{1}{3}\sigma\sigma_{sssss}
		+\frac{1}{3}\sigma_s\sigma_{ssss}
		+\frac{2}{3}\sigma_{ss}\sigma_{sss}
		-\frac{4}{3}\sigma_{sss}\\
		&\qquad
		-\frac{2}{3}\sigma^{-1}\sigma_s\sigma_{ss}^2
		+\frac{2}{3}\sigma^{-1}\sigma_s\sigma_{ss},
	\end{align*}
	which has the desired form.
	Assume that the statement holds for some $n\ge 1$. Then
	\begin{align*}
		\sigma_{s^{n+1}t}
		&=\sigma_{s^n t s}-\frac{g_t}{g}\sigma_{s^{n+1}}\\
		&=-\frac{1}{3}\sigma\sigma_{s^{n+5}}
		-\frac{1}{3}\sigma_s\sigma_{s^{n+4}}
		+\sigma_s*\sigma_{s^{n+4}}
		+\sigma_{ss}*\sigma_{s^{n+3}}\\
		&\qquad
		+
		\sum_{\substack{
				i_1+\cdots+i_{m}=n+4\\
				1\le i_1\le\cdots \le i_{m}\le n+2\\
				2\le m\le n+2
		}}
		f(\sigma)\sigma_s\sigma_{s^{i_1}} * \cdots * \sigma_{s^{i_{m}}}
		\\
		&\qquad\quad
		+
		\sum_{\substack{
				i_1+\cdots+i_{m}=n+5\\
				1\le i_1\le\cdots \le i_{m}\le n+3\\
				2\le m\le n+2
		}}
		f(\sigma)\sigma_{s^{i_1}} * \cdots * \sigma_{s^{i_{m}}}\\
		&\qquad\qquad
		+\frac{2}{3}\sigma_{ssss}\sigma_{s^{n+1}}
		-\frac{2}{3}\sigma^{-1}\sigma_{ss}^2\sigma_{s^{n+1}}
		\\
		&\quad\qquad\qquad
		+
		\sum_{\substack{
				i_1+\cdots+i_{m}=n+2\\
				1\le i_1\le\cdots \le i_{m}\le n+1\\
				2\le m\le n+1
		}}
		f(\sigma)\sigma_s\sigma_{s^{i_1}} * \cdots * \sigma_{s^{i_{m}}}
		\\
		&\qquad\qquad\qquad
		+
		\sum_{\substack{
				i_1+\cdots+i_{m}=n+3\\
				1\le i_1\le\cdots \le i_{m}\le n+2\\
				2\le m\le n+1
		}}
		f(\sigma)\sigma_{s^{i_1}} * \cdots * \sigma_{s^{i_{m}}}\\
		&\qquad\qquad\qquad\quad
		+\frac{2}{3}\sigma^{-1}\sigma_{ss}\sigma_{s^{n+1}}
		-\frac{4}{3}\sigma_{s^{n+3}}\\
		&=
		-\frac{1}{3}\sigma\sigma_{s^{n+5}}
		+\sigma_s*\sigma_{s^{n+4}}
		\\
		&\qquad
		+
		\sum_{\substack{
				i_1+\cdots+i_{m}=n+5\\
				1\le i_1\le\cdots \le i_{m}\le n+3\\
				2\le m\le n+3
		}}
		f(\sigma)\sigma_{s^{i_1}} * \cdots * \sigma_{s^{i_{m}}}
		\\
		&\qquad\quad
		+
		\sum_{\substack{
				i_1+\cdots+i_{m}=n+3\\
				1\le i_1\le\cdots \le i_{m}\le n+2\\
				2\le m\le n+2
		}}
		f(\sigma)\sigma_{s^{i_1}} * \cdots * \sigma_{s^{i_{m}}}
		-\frac{4}{3}\sigma_{s^{n+3}},
	\end{align*}
	which coincides with the desired formula with $n$ replaced by $n+1$. 
	This completes the proof.
\end{proof}

\begin{lem}\label{lem2}
	For every integer $n \ge 1$, the following identity holds:
	\begin{align*}
		\frac{d}{dt}\oint\sigma_{s^n}^2 \, ds
		=&-\frac{2}{3}\oint\sigma\sigma_{s^{n+2}}^2 \, ds+\oint\sigma_s*\sigma_{s^{n+1}}*\sigma_{s^{n+2}} \, ds
		\\
		&\quad
		+\sum_{\substack{
				i_1+\cdots+i_{m}=n+4\\
				1\le i_1\le\cdots \le i_{m}\le n+2\\
				2\le m\le n+2
		}}
		\oint f(\sigma)\sigma_{s^{i_1}} * \cdots * \sigma_{s^{i_{m}}}*\sigma_{s^n} \, ds
		\\
		&\quad
		+\sum_{\substack{
				i_1+\cdots+i_{m}=n+2\\
				1\le i_1\le\cdots \le i_{m}\le n+1\\
				2\le m\le n+1
		}}
		\oint f(\sigma)\sigma_{s^{i_1}} * \cdots * \sigma_{s^{i_{m}}}*\sigma_{s^n} \, ds
		\\
		&\quad
		+\frac{8}{3}\oint\sigma_{s^{n+1}}^2 \, ds.
	\end{align*}
\end{lem}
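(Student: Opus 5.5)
We differentiate under the integral using the commutation relation and the evolution of the measure. Since $\partial_t(ds)=\frac{g_t}{g}\,ds$, we have
\begin{align*}
\frac{d}{dt}\oint\sigma_{s^n}^2\,ds=\oint\Bigl(2\sigma_{s^n}\sigma_{s^nt}+\sigma_{s^n}^2\frac{g_t}{g}\Bigr)ds .
\end{align*}
The plan is to substitute Lemma~\ref{lem1} for $\sigma_{s^nt}$, and the formula
\begin{align*}
\frac{g_t}{g}=-\frac23\sigma_{ssss}+\frac23\sigma^{-1}\sigma_{ss}^2-\frac23\sigma^{-1}\sigma_{ss}
\end{align*}
for the measure term. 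Every term is then integrated by parts, as needed, until it matches one of the families listed in the statement.

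\textbf{Leading term.} We have
\begin{align*}
2\oint\sigma_{s^n}\Bigl(-\frac13\sigma\sigma_{s^{n+4}}\Bigr)ds=-\frac23\oint\sigma\sigma_{s^n}\sigma_{s^{n+4}}\,ds .
\end{align*}
Integrating by parts twice gives
\begin{align*}
-\frac23\oint\sigma\sigma_{s^{n+2}}^2\,ds-\frac43\oint\sigma_s\sigma_{s^{n+1}}\sigma_{s^{n+2}}\,ds-\frac23\oint\sigma_{ss}\sigma_{s^n}\sigma_{s^{n+2}}\,ds .
\end{align*}
More precisely, one moves derivatives from $\sigma_{s^{n+4}}$ onto $\sigma\sigma_{s^n}$, so that
\begin{align*}
(\sigma\sigma_{s^n})_{ss}=\sigma\sigma_{s^{n+2}}+2\sigma_s\sigma_{s^{n+1}}+\sigma_{ss}\sigma_{s^n}.
\end{align*}
The first piece is the coercive term. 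The second is of the form $\sigma_s*\sigma_{s^{n+1}}*\sigma_{s^{n+2}}$. The third lies in the first sum: its indices are $(2,n+2)$, with total $n+4$, $m=2$, and maximal index $n+2$.

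\textbf{Remaining terms of Lemma~\ref{lem1}.}
\begin{itemize}
\item The term $\sigma_s*\sigma_{s^{n+3}}$, multiplied by $\sigma_{s^n}$, is integrated by parts once. This yields $\sigma_{ss}*\sigma_{s^n}*\sigma_{s^{n+2}}$, which lies in the first sum, and $\sigma_s*\sigma_{s^{n+1}}*\sigma_{s^{n+2}}$, which is of the second type.
\item The two sums from Lemma~\ref{lem1}, multiplied by $\sigma_{s^n}$, already have exactly the stated form.
\item The linear term gives
\begin{align*}
-\frac83\oint\sigma_{s^n}\sigma_{s^{n+2}}\,ds=\frac83\oint\sigma_{s^{n+1}}^2\,ds .
\end{align*}
\end{itemize}

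\textbf{Measure term.} The contributions $\frac23\sigma^{-1}\sigma_{ss}^2\sigma_{s^n}^2$ and $-\frac23\sigma^{-1}\sigma_{ss}\sigma_{s^n}^2$ have index totals $n+4$ and $n+2$ respectively, counting the extra $\sigma_{s^n}$ separately. All their indices are $\le n+2$ (respectively $\le n+1$ when $n\ge1$; for $n=1$ the index $2$ equals $n+1$). They therefore fit the two sums with $m=3$ and $m=2$. The term $-\frac23\sigma_{ssss}\sigma_{s^n}^2$ is treated according to $n$:
\begin{itemize}
\item for $n\ge2$, the index $4\le n+2$, so it already lies in the first sum with indices $(n,4)$;
\item for $n=1$, it is $-\frac23\oint\sigma_{ssss}\sigma_s^2\,ds$, and one integration by parts turns it into $\frac43\oint\sigma_s\sigma_{ss}\sigma_{sss}\,ds$, which is of the $\sigma_s*\sigma_{s^{n+1}}*\sigma_{s^{n+2}}$ type.
\end{itemize}

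\textbf{Main obstacle.} The step requiring care is the index bookkeeping. Every term produced must have maximal derivative index at most $n+2$ in the first family and at most $n+1$ in the second, with $m$ in the allowed range. The only dangerous terms are those containing $\sigma_{s^{n+3}}$ or $\sigma_{s^{n+4}}$. The plan handles each of them by integrating by parts off the highest-order factor onto the lower ones, and checks that the resulting derivative counts stay within the ranges. The exact constants $-\frac23$ and $\frac83$ come only from the leading and linear terms; all other constants are absorbed into the $*$ notation.
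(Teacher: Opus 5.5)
Your proof is correct and follows the paper's argument: you differentiate using $\partial_t(ds)=\frac{g_t}{g}\,ds$, substitute Lemma~\ref{lem1} and the formula for $g_t/g$, and integrate by parts the three terms $\sigma\sigma_{s^n}\sigma_{s^{n+4}}$, $\sigma_s*\sigma_{s^n}*\sigma_{s^{n+3}}$ and $\sigma_{s^n}\sigma_{s^{n+2}}$. Your explicit index check on the measure term fills in a detail the paper leaves under ``collecting like terms'': for $n=1$ the term $\sigma_{ssss}\sigma_s^2$ violates $i_m\le n+2$, so it must be integrated by parts into $\sigma_s*\sigma_{ss}*\sigma_{sss}$.
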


\begin{proof}
	Employing Lemma~\ref{lem1}, a straightforward computation yields
	\begin{align*}
		\frac{d}{dt}\oint\sigma_{s^n}^2\,ds
		&=2\oint\sigma_{s^n}\sigma_{s^n t}\,ds
		+\oint\sigma_{s^n}^2\frac{g_t}{g}\,ds \\
		&=-\frac{2}{3}\oint\sigma\sigma_{s^n}\sigma_{s^{n+4}}\,ds
		+\oint\sigma_s*\sigma_{s^n}*\sigma_{s^{n+3}}\,ds \\
		&\qquad
		+\sum_{\substack{
				i_1+\cdots+i_{m}=n+4\\
				1\le i_1\le\cdots \le i_{m}\le n+2\\
				2\le m\le n+2
		}}
		\oint f(\sigma)\sigma_{s^{i_1}} * \cdots * \sigma_{s^{i_{m}}}*\sigma_{s^n}\,ds
		\\
		&\qquad\quad
		+\sum_{\substack{
				i_1+\cdots+i_{m}=n+2\\
				1\le i_1\le\cdots \le i_{m}\le n+1\\
				2\le m\le n+1
		}}
		\oint f(\sigma)\sigma_{s^{i_1}} * \cdots * \sigma_{s^{i_{m}}}*\sigma_{s^n}\,ds
		\\
		&\qquad\qquad
		-\frac{8}{3}\oint\sigma_{s^n}\sigma_{s^{n+2}}\,ds
		-\frac{2}{3}\oint\sigma_{s^n}^2\sigma_{ssss}\,ds \\
		&\quad\qquad\qquad
		+\frac{2}{3}\oint\sigma_{s^n}^2\sigma^{-1}\sigma_{ss}^2\,ds
		-\frac{2}{3}\oint\sigma_{s^n}^2\sigma^{-1}\sigma_{ss}\,ds.
	\end{align*}
	By integration by parts, we obtain the following identities
	\begin{align*}
		\oint\sigma\sigma_{s^n}\sigma_{s^{n+4}}\,ds
		=\oint\sigma\sigma_{s^{n+2}}^2\,ds
		+\oint\sigma_{ss}\sigma_{s^n}\sigma_{s^{n+2}}\,ds 
		+2\oint\sigma_s\sigma_{s^{n+1}}\sigma_{s^{n+2}}\,ds,
	\end{align*}
	\begin{align*}
		\oint\sigma_s*\sigma_{s^n}*\sigma_{s^{n+3}}\,ds
		=\oint\sigma_{ss}*\sigma_{s^n}*\sigma_{s^{n+2}}\,ds
		+\oint\sigma_s*\sigma_{s^{n+1}}*\sigma_{s^{n+2}}\,ds,
	\end{align*}
	\begin{align*}
		\oint\sigma_{s^n}\sigma_{s^{n+2}}\,ds
		=-\oint\sigma_{s^{n+1}}^2\,ds.
	\end{align*}
	Substituting these into the previous expression and collecting like terms, we arrive at the desired formula.
	This completes the proof.
\end{proof}

\begin{lem}\label{HiY ine}
	For every integer $m\ge1$, $l\ge1$, and $k\ge0$, and for every $\epsilon>0$, we have
	\begin{align*}
		\oint|\sigma_{s^{i_1}}\cdots\sigma_{s^{i_m}}| ds
		\le
		\epsilon\oint\sigma_{s^{k+1}}^2 ds
		+
		C E^{\frac{2km-2l+m+2}{4k-2l+m+2}},
	\end{align*}
	where $i_1+\cdots+i_m=l$, 
	$1\le i_1\le\cdots\le i_m$, and $C$ depends on $\epsilon$, $m$, $l$, $k$.
\end{lem}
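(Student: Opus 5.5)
The plan is to estimate the integrand factor by factor: Hölder's inequality first, then the Gagliardo--Nirenberg type interpolation inequality stated above applied to each factor with $v=\sigma_s$, and finally Young's inequality to split off the top-order term. The choice $v=\sigma_s$ is legitimate because $\sigma$ is periodic in $s$, so $\sigma_s$ has zero mean. Derivatives of $\sigma$ of order $i$ are then derivatives of $v$ of order $i-1$, and we write $\|\sigma_s\|_{L^2}=E^{1/2}$.

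\emph{Step 1 (Hölder).} For $m\ge2$ we have
\[
\oint|\sigma_{s^{i_1}}\cdots\sigma_{s^{i_m}}|\,ds\le\prod_{j=1}^m\|\sigma_{s^{i_j}}\|_{L^m}.
\]
The case $m=1$ is handled separately and is easier: bound $\oint|\sigma_{s^{l}}|\,ds\le\mathcal{L}^{1/2}\|\sigma_{s^l}\|_{L^2}$, with $\mathcal{L}$ controlled by the preserved area through Proposition~\ref{Iso ine}, and then interpolate in $L^2$.

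\emph{Step 2 (interpolation).} Apply the interpolation inequality with $j=i_j-1$, derivative order $k$, $p=q=2$ and $r=m$. The scaling relation
\[
\frac1m=(i_j-1)+\lambda_j\Big(\frac12-k\Big)+\frac{1-\lambda_j}{2}
\]
gives
\[
\lambda_j=\frac{i_j-\frac12-\frac1m}{k}.
\]
The admissibility condition $\lambda_j\ge (i_j-1)/k$ is equivalent to $\frac1m\le\frac12$, which holds since $m\ge2$. Each factor then satisfies
\[
\|\sigma_{s^{i_j}}\|_{L^m}\le c\,E^{(1-\lambda_j)/2}\,\|\sigma_{s^{k+1}}\|_{L^2}^{\lambda_j}.
\]
Multiplying over $j$ and setting $\Lambda:=\sum_j\lambda_j=\frac{2l-m-2}{2k}$, we obtain
\[
\oint|\sigma_{s^{i_1}}\cdots\sigma_{s^{i_m}}|\,ds\le C\,E^{\frac{m-\Lambda}{2}}\Big(\oint\sigma_{s^{k+1}}^2ds\Big)^{\frac{\Lambda}{2}}.
\]

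\emph{Step 3 (Young).} Provided $\Lambda<2$, apply Young's inequality with exponents $\frac2\Lambda$ and $\frac{2}{2-\Lambda}$. This yields
\[
\epsilon\oint\sigma_{s^{k+1}}^2\,ds+C(\epsilon)\,E^{\frac{m-\Lambda}{2-\Lambda}}.
\]
Substituting the value of $\Lambda$, the exponent becomes
\[
\frac{m-\Lambda}{2-\Lambda}=\frac{2km-2l+m+2}{4k-2l+m+2},
\]
which is exactly the exponent in the statement.

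\emph{Main obstacle: range of validity.} The hardest part is checking that the interpolation inequality applies in every case the lemma is used. Two conditions need attention. First, the upper constraint $\lambda_j\le1$ requires roughly $i_j\le k+1$, so the highest derivative in the product may not exceed order $k+1$. Second, Young's inequality needs $\Lambda<2$, that is $2l-m-2<4k$. Both hold in the intended application, Lemma~\ref{lem2} with $k=n+1$: there $i_m\le n+2$, and the total order is at most $l\le 2n+4$ with $m\ge3$. These constraints are implicit in the statement, and I would record them explicitly in the proof. When $i_j=1$ and $m=2$, we get $\lambda_j=0$; that factor is then bounded by Hölder directly by $\|\sigma_s\|_{L^2}$, with no interpolation needed.
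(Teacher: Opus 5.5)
Your route is the paper's own: Hölder with every exponent equal to $m$, interpolation for $v=\sigma_s$ with $p=q=2$ and $r=m$, then Young. Your $\lambda_j=\frac{i_j-\frac12-\frac1m}{k}$, your $\Lambda=\frac{2l-m-2}{2k}$ and your final exponent all coincide with the paper's.

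\textbf{Gap in Step~2.} Step~2 has a genuine gap, which the paper's proof shares, and your claim that it is harmless in the application is wrong. The condition $\lambda_j\le 1$ is exactly $i_j\le k+\frac12+\frac1m$. For $m\ge3$ this forces $i_j\le k$, not ``roughly $i_j\le k+1$''. Now take $k=n+1$, as in Lemma~\ref{lem2}. The terms the lemma is needed for there have three factors, one of order $n+2=k+1$: $\sigma_s\sigma_{s^{n+1}}\sigma_{s^{n+2}}$, and $\sigma_{ss}\sigma_{s^n}\sigma_{s^{n+2}}$, which appears explicitly in that lemma's proof. For the top factor, $\lambda_j=\frac{n+7/6}{n+1}>1$. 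Step~2 would then need $\|\sigma_{s^{k+1}}\|_{L^3}\le c\,E^{(1-\lambda_j)/2}\|\sigma_{s^{k+1}}\|_{L^2}^{\lambda_j}$, which is false. To see this, add to a fixed low-frequency $\sigma_s$ a narrow zero-mean bump normalized so that its contribution to $\|\sigma_{s^{k+1}}\|_{L^2}$ stays fixed. Then $E$ stays bounded above and below, while the $L^3$ norm grows like $\eta^{-1/6}$ as the width $\eta\to0$.

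\textbf{The repair.} Use unequal Hölder exponents with $\sum_j 1/p_j=1$. Put the top-order factor in $L^2$ and the other $m-1$ factors in $L^{2(m-1)}$.
\begin{itemize}
\item Interpolation then gives $\lambda_j=\frac{i_j-\frac12-\frac1{p_j}}{k}$.
\item Each $\lambda_j$ lies in $[\frac{i_j-1}{k},1]$. It equals $1$ for the top factor; for the others this uses $i_j\le k$ and $m\ge 2$.
\item The sum $\sum_j\lambda_j=\frac{l-\frac m2-1}{k}=\Lambda$ is unchanged, so Young gives exactly the stated power of $E$.
\end{itemize}
This needs no index above $k+1$ and at most one index equal to $k+1$. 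Two such indices would force $m=2$ and $\Lambda=2$, leaving no room for Young. Both conditions hold in Lemma~\ref{lem2}, since two factors of order $n+2$ would require $2n+4\le n+4$. The lemma should therefore be stated with these hypotheses:
\begin{itemize}
\item $k\ge1$ and $m\ge2$;
\item $i_m\le k+1$, with at most one index equal to $k+1$;
\item $2l-m-2<4k$.
\end{itemize}

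\textbf{The case $m=1$.} Your separate branch gives $E^{\frac{k-l+1}{2k-l+1}}$, with a constant depending on $\mathcal{L}$, not the stated power. The stated power is in fact false for $m=1$. Take $l=1$ and $\sigma_s=\delta\cos(2\pi s/\mathcal{L})$ with $\delta\to0$. The left side is of order $\delta$, while the right side is $O(\delta^2)+O(\delta^{\frac{4k+2}{4k+1}})$. So this case should simply be excluded; it is never used.
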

\begin{proof}
	By H\"{o}lder's inequality,
	\begin{align}\label{Holder}
		\oint\lvert\sigma_{s^{i_1}}\cdots\sigma_{s^{i_m}}\rvert ds\le\prod\limits_{j=1}^m\left(\oint\left|\sigma_{s^{i_j}}\right|^mds\right)^{\frac{1}{m}}.
	\end{align}
	Using the interpolation inequality, each term can be estimated as follows
	\begin{align*}
		\left(\oint\left|\sigma_{s^{i_j}}\right|^mds\right)^{\frac{1}{m}}\le c_j E^{\frac{1}{2}\left(1-\frac{i_j}{k}+\frac{1}{2k}+\frac{1}{mk}\right)}\left(\oint\sigma_{s^{k+1}}^2ds\right)^{\frac{1}{2}\left(\frac{i_j}{k}-\frac{1}{2k}-\frac{1}{mk}\right)},
	\end{align*}
	where $ c_j $ is a positive constant.
	Substituting into \eqref{Holder}, we obtain
	\begin{align*}
		\oint\left|\sigma_{s^{i_1}}\cdots\sigma_{s^{i_m}}\right| ds & \le cE^{\frac{1}{2}\left(m-\frac{l}{k}+\frac{m}{2k}+\frac{1}{k}\right)}\left(\oint\sigma_{s^{k+1}}^2ds\right)^{\frac{1}{2}\left(\frac{l}{k}-\frac{m}{2k}-\frac{1}{k}\right)}\\ & \le \epsilon\oint\sigma_{s^{k+1}}^2ds+C(\epsilon,m,l,k)E^\frac{2km-2l+m+2}{4k-2l+m+2},
	\end{align*}
	where $ \epsilon $ is an arbitrary positive constant.
\end{proof}

\begin{proof}[Proof of Proposition \ref{energy est.}]
	By Lemma~\ref{lem2} and Corollary~\ref{bound-sigma}, we obtain
	\begin{align*}
		\frac{d}{dt}\oint\sigma_{s^n}^2 ds
		\le&
		-C_0\oint\sigma_{s^{n+2}}^2 ds
		+C_1\oint|\sigma_s\sigma_{s^{n+1}}\sigma_{s^{n+2}}| ds
		+\frac{8}{3}\oint\sigma_{s^{n+1}}^2 ds\\
		&\qquad\quad
		+\sum_{\substack{
				i_1+\cdots+i_{m}=n+4\\
				1\le i_1\le\cdots \le i_{m}\le n+2\\
				2\le m\le n+2
		}}
		C_2\oint|\sigma_{s^{i_1}} \cdots  \sigma_{s^{i_{m}}}\sigma_{s^n}| ds
		\\
		&\qquad\qquad
		+\sum_{\substack{
				i_1+\cdots+i_{m}=n+2\\
				1\le i_1\le\cdots \le i_{m}\le n+1\\
				2\le m\le n+1
		}}
		C_3\oint|\sigma_{s^{i_1}}  \cdots  \sigma_{s^{i_{m}}}\sigma_{s^n}| ds,
	\end{align*}
	where $C_i\ge0$ are constants.
	We estimate each term using Lemma~\ref{HiY ine}. For any $\epsilon_i>0$, we have
	\begin{align*}
		C_1\oint|\sigma_s\sigma_{s^{n+1}}\sigma_{s^{n+2}}| ds
		\le
		\epsilon_1\oint\sigma_{s^{n+2}}^2 ds
		+
		c_1 E^{2n+3},
	\end{align*}
	\begin{align*}
		\sum C_2\oint|\sigma_{s^{i_1}} \cdots  \sigma_{s^{i_{m}}}\sigma_{s^n}| ds
		&\le
		\epsilon_2\oint\sigma_{s^{n+2}}^2 ds
		+
		c_2 E^{2n+3},\\
		\sum C_3\oint|\sigma_{s^{i_1}}  \cdots  \sigma_{s^{i_{m}}}\sigma_{s^n}| ds
		&\le
		\epsilon_3\oint\sigma_{s^{n+2}}^2 ds
		+
		c_3 E^{2n+3-\frac{8(n+1)}{m+3}},
	\end{align*}
	\begin{align*}
		\frac{8}{3}\oint\sigma_{s^{n+1}}^2 ds
		\le
		\epsilon_4\oint\sigma_{s^{n+2}}^2 ds
		+
		c_4 E.
	\end{align*}
Choosing $\epsilon_i>0$ sufficiently small so that
$-C_0 + \sum_{i=1}^4 \epsilon_i \le 0$, and noting that
$1 \le 2n+3 - \frac{8(n+1)}{m+3} \le 2n+3$ for $m \ge 1$, we conclude that
	\begin{align*}
		\frac{d}{dt} \oint \sigma_{s^{n}}^2 ds
		\le
		C \bigl( E + E^{2n+3} \bigr),
	\end{align*}
	where $C$ depends only on $n$, the initial data $\gamma_0$, and the constants in the interpolation inequalities.
	This completes the proof.
\end{proof}

\section{Proof of Theorem \ref{A}}\label{Proof of Theorem A}

We begin by deriving several consequences of the energy inequalities.

\begin{prop}\label{E-bound}
	Let $\{\gamma(\cdot,t)\}_{t\in[0,T)}$ be a smooth, strictly convex, origin-symmetric solution to the flow \eqref{main-flow1}.
	Then the energy $E(t)$ is uniformly bounded in terms of the initial curve $\gamma_0$.
\end{prop}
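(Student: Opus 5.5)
The plan is to avoid integrating the differential inequality $dE/dt\le C(E+E^5)$, which only gives a bound for short times. Instead we bound $E$ directly through the quantities already controlled in Section~\ref{sec:sigma-bound}. By Corollary~\ref{bound-sigma} we have $\frac1C\le\sigma\le C$ on $S^1\times[0,T)$. Hence
\[
E(t)=\oint\sigma_s^2\,ds\le C^2\oint\frac{\sigma_s^2}{\sigma^2}\,ds .
\]
By \eqref{relate-l}, the right-hand side is at most $2C^2\pi^{\frac43}A_0^{-\frac13}\mathscr{L}(t)$. Proposition~\ref{l-bound} gives $\mathscr{L}(t)<\mathscr{L}(0)+\frac{12}{11}B$, with $B=B(\gamma_0)$ given explicitly. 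Combining these yields
\[
E(t)\le 2C^2\pi^{\frac43}A_0^{-\frac13}\Bigl(\mathscr{L}(0)+\tfrac{12}{11}B\Bigr)
\]
for all $t\in[0,T)$. Every constant here depends only on $\gamma_0$: the area $A_0$ is preserved by Proposition~\ref{monotone}, and $C$ comes from Corollary~\ref{bound-sigma}.

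The order of steps is as follows.
\begin{enumerate}
\item Recall that the area stays equal to $A_0$.
\item Use Corollary~\ref{bound-sigma} to pass from the weighted integrand $\sigma_s^2/\sigma^2$ to the unweighted one, $\sigma_s^2$.
\item Use \eqref{totalcur}, \eqref{1/sigma} and \eqref{relate-l}, which in turn rely on the Blaschke--Santal\'o and isoperimetric inequalities.
\end{enumerate}
Step 3 needs care: Proposition~\ref{BS ine} holds for \emph{some} interior point $p_0$, whereas $\sigma$ is computed with respect to the origin. For origin-symmetric curves the Santal\'o point is the origin. By Proposition~\ref{symmetric} the symmetry persists along the flow, so the inequality applies with $\sigma$ measured from the origin.

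The main obstacle is that the uniform bound rests entirely on Proposition~\ref{l-bound}; everything afterwards is bookkeeping. If one instead wanted a bound from the evolution inequality, the fallback would be to combine $dE/dt\le C(E+E^5)$ with an integrability statement. One could use $\int_0^T\oint\sigma^{-4}\sigma_s^2\,ds\,dt<\infty$, which follows from \eqref{BS-quantity}, since $\oint\sigma^{-2}ds\le 2\pi^2A_0^{-1}$ is monotone and bounded. That integrability gives $\int_0^T E\,dt\le C$, and a standard ODE argument then upgrades this to a pointwise bound. I expect the direct route above to suffice, so the fallback should not be needed.
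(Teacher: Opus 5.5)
Your argument is correct and is the paper's proof. The upper bound $\sigma\le C$ from Corollary~\ref{bound-sigma} gives $E\le C^2\oint\sigma_s^2\sigma^{-2}\,ds$, and \eqref{relate-l} with Proposition~\ref{l-bound} bounds this by $2C^2\pi^{4/3}A_0^{-1/3}\bigl(\mathscr{L}(0)+\tfrac{12}{11}B\bigr)$. Your remark that origin-symmetry (Proposition~\ref{symmetric}) is what allows Proposition~\ref{BS ine} to be applied with $\sigma$ measured from the origin makes explicit a point the paper leaves tacit.

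Your fallback, however, would not work. The inequality $dE/dt\le C(E+E^5)$ together with $\int_0^T E\,dt<\infty$ does not rule out blow-up, since a profile like $E\sim (T-t)^{-1/4}$ is integrable. This is why the paper uses that integrability only in Corollary~\ref{higherorder-sigma}, after $E$ is already known to be bounded.
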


\begin{proof}
	By Corollary~\ref{bound-sigma}, we have
	\[
	E(t) = \oint \sigma_s^2 \, ds
	= \oint \frac{\sigma_s^2}{\sigma^2} \, \sigma^2 \, ds
	\le C \oint \frac{\sigma_s^2}{\sigma^2} \, ds,
	\]
	where $C>0$ depends only on $\gamma_0$.
	Using \eqref{relate-l} and Proposition~\ref{l-bound}, we then get
	\[
	E(t) \le C\, 2\pi^{\frac{4}{3}} A_0^{-\frac{1}{3}} \left(\mathscr{L}(0) + \frac{12}{11}B\right),
	\]
	for some $B>0$ depending only on $\gamma_0$. 
	This completes the proof.
\end{proof}

\begin{cor}\label{higherorder-sigma}
	Let $\{\gamma(\cdot,t)\}_{t\in[0,T)}$ be a smooth, strictly convex, origin-symmetric solution to the flow \eqref{main-flow1}.
	Then for every $m \in \mathbb{N}$, the quantity $\oint \sigma_{s^m}^2 \, ds$ is uniformly bounded in terms of the initial curve $\gamma_0$.
\end{cor}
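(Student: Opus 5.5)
The plan is to combine the uniform energy bound of Proposition~\ref{E-bound} with the higher-order energy inequality of Proposition~\ref{energy est.}. The one delicate point is that \eqref{m-order} only controls the growth rate, which gives bounds on every finite interval but not uniformly in $t$. For $m=0$ the claim follows from Corollary~\ref{bound-sigma} together with the bound $\mathcal{L}\le 2\pi^{2/3}A_0^{1/3}$ from Proposition~\ref{Iso ine}. For $m=1$ it is exactly Proposition~\ref{E-bound}. So fix $m=n\ge 2$.

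First, integrating \eqref{m-order} on $[0,t]$ and using $E\le C_E$ gives
\[
\oint\sigma_{s^n}^2\,ds\le \oint\sigma_{s^n}^2\,ds\Big|_{t=0}+C(C_E+C_E^{2n+3})\,t .
\]
This already bounds the quantity on any finite interval, which is what the long-time existence argument needs. To make the bound independent of $t$, I would return to the proof of Proposition~\ref{energy est.} and keep the good term rather than discard it. That proof actually yields
\[
\frac{d}{dt}\oint\sigma_{s^n}^2\,ds\le -\tfrac{C_0}{2}\oint\sigma_{s^{n+2}}^2\,ds+C(E+E^{2n+3}).
\]

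Second, I would convert the good term into a damping term. Since $\sigma_{s^{n}}$ has zero mean on the closed curve, the interpolation inequality with $j=n$, $k=n+1$ (derivatives of $\sigma_s$) gives
\[
\oint\sigma_{s^n}^2\,ds\le c\,E^{\frac{2}{n+1}}\Bigl(\oint\sigma_{s^{n+2}}^2\,ds\Bigr)^{\frac{n-1}{n+1}} .
\]
The periodicity length $\mathcal{L}$ is bounded above by Proposition~\ref{Iso ine}. I should check that it is also bounded below uniformly, so that the interpolation constants are uniform. Since $\mathcal{L}=\oint ds\ge C^{-1}\oint\sigma\,ds=2A_0/C$ by Corollary~\ref{bound-sigma}, this holds.

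Combining the two displays with $E\le C_E$, and writing $Y=\oint\sigma_{s^n}^2\,ds$, we get
\[
Y'\le -c_1 Y^{\frac{n+1}{n-1}}+c_2 .
\]
An ODE comparison then gives $Y(t)\le\max\{Y(0),(c_2/c_1)^{\frac{n-1}{n+1}}\}$ for all $t$, which is a bound depending only on $\gamma_0$.

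I expect the main obstacle to be bookkeeping rather than any new idea. One has to verify that in the Young's-inequality step of Proposition~\ref{energy est.} the constants $\epsilon_i$ can be chosen to leave a fixed fraction of $-C_0\oint\sigma_{s^{n+2}}^2$. One also has to verify that every constant involved (the bounds on $\sigma$, the interpolation constants depending on the curve length, and the bound on $E$) is uniform in $t$.
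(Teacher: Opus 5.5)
Your argument is correct, but it gets the uniform-in-time bound by a different mechanism than the paper. The paper does not reopen the proof of Proposition~\ref{energy est.}. It uses \eqref{m-order} as stated and observes that, since every power of $E$ on the right-hand side is at least $1$ and $E$ is bounded, the right-hand side is $\le C_1E$. It then shows $\int_0^\infty E\,dt<\infty$ from the monotone Blaschke--Santal\'o quantity. By Corollary~\ref{bound-sigma}, $E\le C\oint\sigma^{-4}\sigma_s^2\,ds$, and by \eqref{BS-quantity} this equals $\frac{C}{6}\frac{d}{dt}\oint\sigma^{-2}\,ds$. Finally, $\oint\sigma^{-2}\,ds\le 2\pi^2A_0^{-1}$ by Proposition~\ref{BS ine}. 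So the linear-in-$t$ growth you correctly flag is removed by time-integrability of $E$, not by dissipation.

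You instead keep a fraction of the good term $-\frac{2}{3}\oint\sigma\sigma_{s^{n+2}}^2\,ds$. Using interpolation and $E\le C_E$, you turn it into the superlinear damping $-c_1Y^{(n+1)/(n-1)}$, which gives an absorbing bound by ODE comparison. Keeping that fraction is legitimate, since the $\epsilon_i$ in that proof are arbitrary and you may take $\sum\epsilon_i\le C_0/2$.

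What each route buys:
\begin{itemize}
\item The paper's route is shorter and reuses \eqref{m-order} verbatim. It also yields $\int_0^\infty E\,dt<\infty$, which is needed again for Proposition~\ref{E-decay}. It relies essentially on the absence of an $E$-independent term on the right of \eqref{m-order}.
\item Your route needs only boundedness of $E$, so any bounded forcing would do. The cost is rederiving the energy inequality with the dissipative term retained.
\end{itemize}

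Two minor points. With $v=\sigma_s$, the interpolation indices are $j=n-1$, $k=n+1$; your displayed exponents are the right ones for this choice, not for $j=n$. Your check that $\mathcal{L}$ is bounded below is harmless but unnecessary. Under the stated relation among $j,k,p,q,r$, the interpolation inequality for zero-mean periodic functions is scale invariant, so its constant does not depend on the period.
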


\begin{proof}
	By \eqref{m-order} and Proposition~\ref{E-bound}, we have
	\begin{align*}
		\left(\oint \sigma_{s^m}^2 \, ds\right)(t)
		&\le \left(\oint \sigma_{s^m}^2 \, ds\right)(0)
		+ C \int_{0}^{t} E\big(E^{2m+2} + 1\big)\, dt \\
		&\le \left(\oint \sigma_{s^m}^2 \, ds\right)(0)
		+ C_1 \int_{0}^{t} E \, dt,
	\end{align*}
	where $C_1$ depends only on the initial curve $\gamma_0$.
	By Corollary~\ref{bound-sigma} and \eqref{BS-quantity}, we estimate
	\begin{align*}
		\int_{0}^{t} E \, dt
		&= \int_{0}^{t} \left( \oint \sigma^4 \frac{\sigma_s^2}{\sigma^4} \, ds \right) dt \\
		&\le C_2 \int_{0}^{t} \oint \frac{\sigma_s^2}{\sigma^4} \, ds \, dt \\
		&= \frac{C_2}{6} \left( \left(\oint \frac{1}{\sigma^2} \, ds\right)(t)
		- \left(\oint \frac{1}{\sigma^2} \, ds\right)(0) \right),
	\end{align*}
	where $C_2$ depends only on the initial curve $\gamma_0$.
	It follows from Proposition~\ref{BS ine} that the quantity $\int_{0}^{t} E \, dt$ is bounded. Consequently, $\oint \sigma_{s^m}^2 \, ds$ is uniformly bounded on $[0,T)$, which completes the proof.
\end{proof}
These estimates allow us to show that the solution preserves strict convexity as long as it exists.
\begin{prop}[Preservation of strict convexity]
	Let $\{\gamma(\cdot,t)\}_{t\in[0,T)}$ be a smooth solution to the flow \eqref{main-flow1} with smooth, origin-symmetric, closed, strictly convex initial curve $\gamma_0$.
	Then $\gamma(\cdot,t)$ remains strictly convex for all $t \in [0,T)$.
\end{prop}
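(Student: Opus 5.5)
The plan is to show that the Euclidean curvature $\kappa$ stays bounded and bounded away from zero on $[0,T)$, with constants depending only on $\gamma_0$ and possibly on $T$. An open-closed argument then shows the curve never leaves the class of strictly convex curves. Because the flow is fourth order, there is no maximum principle for $\kappa$. Instead I will work through the affine quantities, where Corollary~\ref{bound-sigma} and Corollary~\ref{higherorder-sigma} already give uniform control.

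\emph{Step 1: affine curvature and affine length.} By Corollary~\ref{higherorder-sigma}, every $\oint\sigma_{s^m}^2\,ds$ is bounded. On the affine length side, Proposition~\ref{Iso ine} gives $\mathcal{L}\le 2\pi^{2/3}A_0^{1/3}$. For the lower bound, Hölder and Proposition~\ref{BS ine} give $\mathcal{L}\ge (\oint\sigma\,ds)^2/\oint\sigma^2\,ds\ge 4A_0^2/(C^2\mathcal{L})$, so $\mathcal{L}$ is bounded below. With $\mathcal{L}$ bounded above and below, one-dimensional Sobolev embedding turns the $L^2$ bounds into sup bounds on all $\sigma_{s^m}$. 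Then \eqref{M-relation} gives
\[
\mathcal{K}=(1-\sigma_{ss})/\sigma,
\]
and together with $1/C\le\sigma\le C$ this bounds $\mathcal{K}$ and all its $s$-derivatives uniformly.

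\emph{Step 2: Euclidean curvature.} I will pass to the angle parametrization and set $\rho=r^{-1/3}=\kappa^{1/3}$. By \eqref{curva-relation},
\[
\rho_{\theta\theta}+\rho=\mathcal{K}\rho^{-3},
\]
and by \eqref{support-relation}, $\sigma=h\rho^{-1}$. Since the curve is origin-symmetric and encloses area $A_0$, while $\sigma$ and $\mathcal{K}$ are bounded, I expect to bound the inradius and the diameter. One route: $\oint\sigma^{-2}\,ds=\int h^{-2}\,d\theta$, since $ds=r^{2/3}d\theta$ and $\sigma^{-2}=h^{-2}r^{-2/3}$. This quantity is nondecreasing but bounded by Proposition~\ref{BS ine}, so for an origin-symmetric body $\int h^{-2}d\theta\le C$. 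That prevents $h$ from being large on a large arc and forces the body to be thick. Symmetry together with the fixed area then bounds $\max h$, and hence bounds $h$ below as well.

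This gives $1/C\le h\le C$. Then $\rho=h/\sigma$ satisfies $1/C\le\rho\le C$, so $\kappa=\rho^3$ is pinched between positive constants. Higher $\theta$-derivatives of $\kappa$ follow from the equation $\rho_{\theta\theta}+\rho=\mathcal{K}\rho^{-3}$ and the conversion $d\theta=\rho^{2}ds$.

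\emph{Step 3: conclusion.} Let $t^*$ be the first time at which $\min\kappa$ hits zero. The bounds of Step 2 hold uniformly on $[0,t^*)$, so $\kappa\ge 1/C>0$ persists up to $t^*$ by continuity. This contradicts the choice of $t^*$, so strict convexity holds on all of $[0,T)$.

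\emph{Main obstacle.} The hardest step is the geometric one in Step 2: turning the bounds on $\sigma$ into two-sided bounds on $h$, in particular an upper bound on the diameter. If the Blaschke--Santaló route is insufficient, I would fall back on the inequality $\sigma\ge 1/C$. A very elongated origin-symmetric body of area $A_0$ has points near its far ends where $\sigma=h r^{1/3}$ is forced to be small, because $r$ there is tiny relative to $h^{-3}$. This would give a contradiction once the width and diameter are suitably large.
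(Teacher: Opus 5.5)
Your Step~2 has a genuine gap, and no static argument of the kind you propose can close it. Everything you feed into it is invariant under $\mathrm{SL}(2)$: the bounds on $\sigma$ and its derivatives from Corollaries~\ref{bound-sigma} and~\ref{higherorder-sigma}, the bound on $\oint\sigma^{-2}\,ds=\int h^{-2}\,d\theta$, the fixed area $A_0$, and origin symmetry. By contrast, $h$ and $\kappa$ are not $\mathrm{SL}(2)$-invariant. Concretely, take the ellipses $x^2/a^2+y^2/b^2=1$ with $ab=A_0/\pi$. They are origin-symmetric and enclose area $A_0$. They have $\sigma\equiv(ab)^{2/3}$ and $\mathcal{K}\equiv(ab)^{-2/3}$, and they satisfy $\int_0^{2\pi}h^{-2}\,d\theta=2\pi/(ab)=2\pi^2/A_0$, which is equality in Proposition~\ref{BS ine}. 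Yet as $a\to\infty$ one has $\max h=a\to\infty$, $\min h=b\to0$ and $\min\kappa=b/a^2\to0$. So bounding the polar area, together with symmetry and fixed area, does not force the body to be thick, since both areas are $\mathrm{SL}(2)$-invariants. Your fallback fails for the same reason: at the far end of such an ellipse $h=a$ and $r=b^2/a$, so $\sigma=hr^{1/3}=(ab)^{2/3}$ is not small. The paper itself reflects this in Proposition~\ref{converge}, where $h$ can only be bounded after renormalizing by some $L_t\in\mathrm{SL}(2)$.

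The missing idea is to use the evolution in time. You should accept constants that grow with $t$ but stay finite on $[0,\bar t]$ for $\bar t<T$. The paper does this by computing, at fixed normal angle, $\partial_t\log r=-\sigma_{ssss}+\sigma^{-1}\sigma_{ss}^2-\sigma^{-1}\sigma_{ss}$, which is \eqref{logrt}. Your Step~1 bounds this, which gives $r_{\min}(0)e^{-Ct}\le r\le r_{\max}(0)e^{Ct}$ up to the first degenerate time. Within your own setup there is an even shorter fix. By \eqref{ht}, $\partial_t\log h=-\sigma^{-1}\sigma_{ss}$ is bounded, so $\min h(\cdot,0)\,e^{-Ct}\le h(\cdot,t)\le\max h(\cdot,0)\,e^{Ct}$; here $h(\cdot,0)>0$ by origin symmetry and strict convexity. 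Then $\rho=h/\sigma$ is pinched on $[0,\bar t)$ exactly as you wanted, and Steps~1 and~3 go through unchanged. As a side remark, the lower bound on $\mathcal{L}$ in Step~1 follows directly from $2A_0=\oint\sigma\,ds\le C\mathcal{L}$.
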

\begin{proof}
	We first derive the evolution equation for the radius of curvature $r$. 
	By \eqref{main-flow2}, \eqref{vector-relation}, and \eqref{support-relation}, 
	the scalar equation \eqref{scalar-eq} can be rewritten as
	\begin{align}\label{ht}
		\frac{\partial h}{\partial t} 
		= -r^{-\frac{1}{3}}\sigma_{ss}
		= -h\sigma^{-1}\sigma_{ss}.
	\end{align}
	A direct computation yields
	\begin{align*}
		\frac{\partial(\log r)}{\partial t}
		&= r^{-1}(h_{t\theta\theta}+h_t)\\
		&= -r^{-1}\Big((r^{-\frac{1}{3}})_{\theta\theta}\sigma_{ss}
		+ r^{-\frac{1}{3}}\sigma_{ss}
		+ r^{-\frac{1}{3}}\sigma_{ss\theta\theta}
		+ 2(r^{-\frac{1}{3}})_\theta\sigma_{ss\theta}\Big).
	\end{align*}
	Using \eqref{elements}, we obtain
	\begin{align*}
		r^{-1}\sigma_{ss\theta\theta}
		= r\sigma_{ssss}+\frac{2}{3}r_s\sigma_{sss},
	\end{align*}
	and
	\begin{align*}
		2\left(r^{-\frac{1}{3}}\right)_\theta\sigma_{ss\theta}
		= -\frac{2}{3}r_s\sigma_{sss}.
	\end{align*}
	Substituting these identities into the above expression, 
	and using \eqref{curva-relation} together with \eqref{M-relation}, we arrive at
	\begin{equation}\label{logrt}
		\begin{aligned}
			\frac{\partial(\log r)}{\partial t}
			&= -\mathcal{K}\sigma_{ss}
			- \sigma_{ssss}
			- \frac{2}{3}r^{-1}r_s\sigma_{sss}
			+ \frac{2}{3}r^{-1}r_s\sigma_{sss}\\
			&= -\sigma_{ssss}
			+ \sigma^{-1}\sigma_{ss}^2
			- \sigma^{-1}\sigma_{ss}.
		\end{aligned}
	\end{equation}
	
	We now prove the claim. Suppose, for contradiction, that there exists a first time $\bar{t} \in (0,T)$ such that $\gamma(\cdot,\bar{t})$ is not strictly convex. Then $\gamma(\cdot,t)$ is strictly convex for all $t \in [0,\bar{t})$.
	By Corollary~\ref{bound-sigma}, $\sigma^{-1}$ is uniformly bounded on $S^1 \times [0,\bar{t})$. 
	Moreover, by Corollary~\ref{higherorder-sigma} and the Sobolev inequality, $\sigma$ and all its derivatives are uniformly bounded on $S^1 \times [0,\bar{t})$. 
	It follows from \eqref{logrt} that $\partial_t (\log r)$ is uniformly bounded. 
	Hence there exists a constant $C>0$ such that
	\[
	r_{\min}(0)e^{-Ct} \le r(\cdot,t) \le r_{\max}(0)e^{Ct}
	\quad \text{for all } t \in [0,\bar{t}).
	\]
	By smoothness, these bounds persist at $t=\bar{t}$, so that $\kappa(\cdot,\bar{t})>0$. 
	This contradicts the definition of $\bar{t}$.
	Thus the proof is complete.
\end{proof}

We now show that the solution exists for all time.
\begin{prop}\label{longtime}
	Let $\gamma_0$ be a smooth, origin-symmetric, closed, strictly convex curve. 
	Then the solution of the flow \eqref{main-flow1} exists on $[0,+\infty)$.
\end{prop}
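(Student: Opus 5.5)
We argue by contradiction: suppose the maximal existence time $T$ given by Proposition~\ref{short-time} is finite. The goal is to show that the support function $h(\cdot,t)$ of $\gamma(\cdot,t)$ has uniform $C^k$ bounds for every $k$ on $[0,T)$, together with uniform bounds $0<c\le h$ and $0<c\le r=h_{\theta\theta}+h$. Then $h(\cdot,t)$ converges smoothly as $t\to T$ to a limit $h(\cdot,T)$ that is still positive and uniformly convex. Applying Proposition~\ref{short-time} with initial datum $\gamma(\cdot,T)$ extends the flow past $T$, which is the contradiction. By Proposition~\ref{symmetric}, the solution remains origin-symmetric on $[0,T)$, so all the estimates of Sections~\ref{sec:sigma-bound}--\ref{sec:energy-estimates} apply.

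\textbf{Step 1 (bounds on $\sigma$ and on the geometry).} Corollary~\ref{bound-sigma} gives $C^{-1}\le\sigma\le C$. Corollary~\ref{higherorder-sigma} together with the Sobolev inequality in the arclength variable gives uniform bounds on $\sigma_{s^m}$ for every $m$. For Sobolev we need the length $\mathcal{L}$ to be bounded above and below. The upper bound comes from Proposition~\ref{Iso ine}, since $A=A_0$ is fixed. The lower bound comes from $2A_0=\oint\sigma\,ds\le C\mathcal{L}$. It follows that $\mathcal{K}=(1-\sigma_{ss})/\sigma$ and all its $s$-derivatives are uniformly bounded.

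\textbf{Step 2 (bounds on $r$ and $h$).} As in the proof of the preservation of strict convexity, \eqref{logrt} shows that $\partial_t\log r$ is bounded. Since $T<\infty$, this gives $c\le r\le C$ on $[0,T)$. The curves enclose fixed area $A_0$ and are origin-symmetric, and $r\le C$ bounds the Euclidean perimeter $2\pi\bar r$. Hence the inradius is bounded below, so $h\ge c>0$, and $h\le C$ because the diameter is bounded by the perimeter. Alternatively, \eqref{ht} with bounded $\sigma^{-1}\sigma_{ss}$ gives $e^{-Ct}h_0\le h\le e^{Ct}h_0$ directly, which already suffices on a finite interval.

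\textbf{Step 3 (higher derivatives in $\theta$).} Bounds on $s$-derivatives must be converted into bounds on $\theta$-derivatives of $r$ and $h$. By \eqref{elements}, $\partial_\theta=r^{2/3}\partial_s$, so this is an induction. Write \eqref{curva-relation} as $(r^{-1/3})_{\theta\theta}+r^{-1/3}=r\mathcal{K}$. This is a second-order linear ODE for $w=r^{-1/3}$ with right-hand side $w^{-3}\mathcal{K}$. Assuming $\partial_\theta^j w$ is bounded for $j\le k$, and using that $\partial_\theta^{j}\mathcal{K}$ is expressible through $s$-derivatives of $\mathcal{K}$ and $\theta$-derivatives of $r$ of order $<j+1$, we get a bound on $\partial_\theta^{k+2}w$. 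Hence all $\partial_\theta^k r$ are bounded. Then $h$ satisfies $h_{\theta\theta}+h=r$ with $h$ bounded, which gives all $\partial_\theta^k h$. The time derivatives are bounded through \eqref{scalar-eq}.

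By Arzel\`a--Ascoli, $h(\cdot,t)\to h(\cdot,T)$ smoothly, with $h(\cdot,T)>0$ and $r(\cdot,T)\ge c>0$. Restarting the flow contradicts the maximality of $T$.

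The main obstacle is Step 3, the bookkeeping that converts arclength derivative bounds into $\theta$-derivative bounds. It is routine but needs care with the induction, in particular the $r^{2/3}$ factors, which are controlled by the lower and upper bounds on $r$ from Step 2.
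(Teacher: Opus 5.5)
Your proposal is correct and follows the paper's own argument. You suppose $T<\infty$, take the bounds on $\sigma$ and all its $s$-derivatives from Corollaries~\ref{bound-sigma} and~\ref{higherorder-sigma}, bound $r$ via \eqref{logrt} and $h$ via \eqref{ht} on the finite interval, get uniform $C^k$ bounds, pass to the limit with Arzel\`a--Ascoli, and restart with Proposition~\ref{short-time}. Your affine-length bounds for the Sobolev step and your Step~3 conversion of arclength bounds into $\theta$-derivative bounds (via $w_{\theta\theta}+w=w^{-3}\mathcal{K}$ and then $h_{\theta\theta}+h=r$) fill in details the paper compresses into ``$\gamma(\cdot,t)$ is uniformly bounded in $C^k$''.
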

\begin{proof}
	Suppose, for contradiction, that the maximal existence time $T$ is finite.  
	By Corollary~\ref{bound-sigma}, $\sigma^{-1}$ is uniformly bounded on $S^1 \times [0,T)$, and by Corollary~\ref{higherorder-sigma} together with the Sobolev inequality, $\sigma$ and all its derivatives are uniformly bounded.  
	It follows from \eqref{logrt} that $\partial_t (\log r)$ is uniformly bounded, so $r$ remains bounded above and below on $[0,T)$.  
	Similarly, the evolution equation \eqref{ht} implies that $\partial_t (\log h)$ is uniformly bounded, hence $h$ is also uniformly bounded. 
	Therefore, $\gamma(\cdot,t)$ is uniformly bounded in $C^k$ for every $k \ge 0$.  
	By the Arzelà--Ascoli theorem, there exists a sequence $t_j \to T$ such that $\gamma(\cdot,t_j)$ converges to a limit $\gamma(\cdot,T)$, which is a strictly convex smooth closed curve.  
	Since the time derivative of $\gamma$ and all its derivatives remain bounded, $\gamma(\cdot,t)$ converges in $C^\infty$ to $\gamma(\cdot,T)$ as $t \to T$.  
	The short-time existence result (Proposition~\ref{short-time}) then shows that the solution can be extended beyond $T$, contradicting the maximality of $T$.  
\end{proof}

The following result shows that the energy $E(t)$ decays to zero as $t \to +\infty$.
\begin{prop}\label{E-decay}
	Let $\gamma(\cdot,t)$ be a solution to the flow \eqref{main-flow1}. 
	Then
	\[
	\lim_{t \to +\infty} E(t) = 0.
	\]
\end{prop}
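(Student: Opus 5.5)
The plan is the standard "integrable in time plus bounded derivative" argument. We already know two facts. First, $E\in L^1([0,\infty))$: from the proof of Corollary~\ref{higherorder-sigma}, which now holds on $[0,\infty)$ by Proposition~\ref{longtime}, we have
\[
\int_0^t E\,d\tau\le \frac{C_2}{6}\Big(\oint\sigma^{-2}ds\Big)(t)\le \frac{C_2}{6}\,2\pi^2A_0^{-1}
\]
for all $t$. This uses \eqref{BS-quantity}, Corollary~\ref{bound-sigma} and Proposition~\ref{BS ine}, with $A\equiv A_0$ by Proposition~\ref{monotone}. Second, $E$ is uniformly bounded by Proposition~\ref{E-bound}. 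Note that the Blaschke--Santal\'o point may be taken to be the origin because the curve is origin-symmetric (Proposition~\ref{symmetric}); this is already implicitly used in the paper.

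Next we show that $|dE/dt|$ is uniformly bounded on $[0,\infty)$. We start from the exact identity \eqref{E-evo}:
\[
\frac{dE}{dt}=-\frac{2}{3}\oint\sigma\sigma_{sss}^2ds+\frac13\oint\sigma_{ss}^3ds-\frac23\oint\sigma^{-1}\sigma_s^2\sigma_{ss}^2ds+\frac29\oint\sigma^{-2}\sigma_s^4ds+\frac83\oint\sigma_{ss}^2ds.
\]
Each term is bounded in absolute value using three ingredients. The first is $C^{-1}\le\sigma\le C$ from Corollary~\ref{bound-sigma}. The second is the uniform $L^2$ bounds on $\sigma_{s^m}$ from Corollary~\ref{higherorder-sigma}. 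The third is the Sobolev inequality on a circle of length $\mathcal L$, which gives sup bounds on $\sigma_s,\sigma_{ss}$. For the Sobolev step we need $\mathcal L$ bounded above and below. The upper bound comes from Proposition~\ref{Iso ine}. The lower bound follows from $2A_0=\oint\sigma\,ds\le C\mathcal L$. Since each $\sigma_{s^m}$ has zero mean for $m\ge1$, the interpolation and Sobolev constants are uniform.

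Finally we conclude by the elementary lemma: if $E\ge0$, $\int_0^\infty E<\infty$ and $|E'|\le M$, then $E(t)\to0$. Indeed, suppose $E(t_j)\ge\varepsilon$ along some sequence $t_j\to\infty$. Then $E\ge\varepsilon/2$ on each interval $[t_j,t_j+\varepsilon/(2M)]$. Passing to a subsequence with disjoint intervals makes the integral infinite, a contradiction.

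The main obstacle is purely bookkeeping: checking that the constants in Corollary~\ref{higherorder-sigma} and in the Sobolev/interpolation inequalities are genuinely independent of $t$ on $[0,\infty)$. The time-independence of the bound in Corollary~\ref{higherorder-sigma} rests on $\int_0^\infty E<\infty$, which is exactly the first step. If one prefers to avoid higher-order bounds, an alternative is to use the differential inequality $dE/dt\le C(E+E^5)$ derived before Proposition~\ref{energy est.}. Combined with integrability, this one-sided bound already suffices: if $E(t_j)\ge\varepsilon$, then going backwards $E$ stays $\ge\varepsilon/2$ on an interval of fixed length preceding $t_j$, again contradicting integrability. I would present this cleaner version.
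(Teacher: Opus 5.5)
Your argument is correct. Your first version combines two ingredients. The first is integrability of $E$ from \eqref{BS-quantity}, Corollary~\ref{bound-sigma} and Proposition~\ref{BS ine}. The second is a two-sided bound on $dE/dt$ from \eqref{E-evo}, via Corollary~\ref{higherorder-sigma} and Sobolev. This is exactly the paper's proof.

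The version you say you would present differs in the key ingredient. You drop the bound $|dE/dt|\le M$ and use only the one-sided inequality $dE/dt\le C(E+E^5)$ obtained from \eqref{0-order-0}--\eqref{0-order-3}. Combined with the uniform bound of Proposition~\ref{E-bound}, this gives $dE/dt\le M$. You then run the contradiction argument backwards from $t_j$, on $[t_j-\varepsilon/(2M),t_j]$, rather than forwards as the paper does.

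The paper's forward argument genuinely needs a lower bound on $dE/dt$, i.e.\ control of $\oint\sigma\sigma_{sss}^2\,ds$. That is why it invokes the higher-order estimates. Your backward version shows that the decay of $E$ depends only on three inputs: the zeroth-order bounds (Corollary~\ref{bound-sigma}, Proposition~\ref{E-bound}), the monotonicity \eqref{BS-quantity}, and the scale-invariant interpolation inequalities. It bypasses Proposition~\ref{energy est.} entirely.

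The paper loses nothing by its route, since Corollary~\ref{higherorder-sigma} is needed anyway for the smooth convergence in Proposition~\ref{sigma-converge}. Your route, however, isolates the minimal input for this particular step. Even Proposition~\ref{E-bound} could be dispensed with: comparing with the ODE $y'=C(y+y^5)$ yields a backward interval of length at least $\int_{\varepsilon/2}^{\varepsilon}\frac{dy}{C(y+y^5)}$ on which $E\ge\varepsilon/2$.
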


\begin{proof}
	By \eqref{BS-quantity} and Corollary~\ref{bound-sigma}, 
	\[
	\int_{0}^{+\infty} \left( \oint \frac{1}{\sigma^2} \, ds \right)_t dt 
	\ge C_1 \int_{0}^{+\infty} \oint \sigma_s^2 \, ds \, dt
	= C_1 \int_{0}^{+\infty} E(t) \, dt,
	\]
	where $C_1$ is a positive constant depending only on $\gamma_0$.
	On the other hand, Proposition~\ref{BS ine} implies that the left-hand side is bounded by $2\pi^2A_0^{-1}$.
	Consequently, we obtain
	\begin{align}\label{int-con}
		\int_{0}^{+\infty} E(t) \, dt \le C,
	\end{align}
	for some constant $C>0$ depending only on $\gamma_0$. Moreover, combining \eqref{E-evo} with Proposition~\ref{bound-sigma}, Corollary~\ref{higherorder-sigma}, and the Sobolev inequality, it follows that the time derivative $\frac{dE}{dt}$ is uniformly bounded.
	
	We argue by contradiction. Suppose that $E(t)$ does not converge to zero as $t \to +\infty$. Then there exist a constant $C_0>0$ and a sequence $\{t_i\}_{i=1}^\infty$ with $t_i \to +\infty$ such that $E(t_i) \ge C_0$ for all $i$. 
	Since $\frac{dE}{dt}$ is uniformly bounded, there exists $\epsilon_0>0$ such that $E(t) \ge \frac{C_0}{2}$ for all $t \in (t_i, t_i+\epsilon_0)$. 
	Integrating over these intervals yields
	\begin{align}\label{cont}
		\int_{t_i}^{t_i+\epsilon_0} E(t) \, dt \ge \frac{C_0 \epsilon_0}{2}.
	\end{align}
	However, \eqref{int-con} implies
	\[
	\lim_{i \to \infty} \int_{t_i}^{+\infty} E(t)\,dt = 0,
	\]
	which contradicts \eqref{cont}. Hence, $\lim_{t \to +\infty} E(t) = 0$, as claimed.
\end{proof}

With this decay in hand, we turn to the asymptotic behavior of the affine support function.
\begin{prop}\label{sigma-converge}
	Let $\gamma(\cdot,t)$ be a solution to the flow \eqref{main-flow1} with a smooth, origin-symmetric, closed, strictly convex initial curve $\gamma_0$. 
	Then the affine support function $\sigma(\cdot,t)$ converges smoothly to the constant 
	$\pi^{-\frac{2}{3}} A_0^{\frac{2}{3}}$
	as $t \to +\infty$.
\end{prop}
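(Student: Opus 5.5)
We combine the decay of $E$ (Proposition~\ref{E-decay}), the uniform higher-order bounds (Corollary~\ref{higherorder-sigma}), and the normalization point from Lemma~\ref{inf-sup}.

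\textbf{Step 1: $C^0$ convergence.} Fix $t$. By Lemma~\ref{inf-sup}, since the enclosed area stays equal to $A_0$ (Proposition~\ref{monotone}), there is a point $x_t$ with $\sigma(x_t,t)=\pi^{-\frac23}A_0^{\frac23}$. For any other point, integrating along the shorter arc and applying Cauchy--Schwarz gives
\[
\bigl|\sigma(x,t)-\pi^{-\frac23}A_0^{\frac23}\bigr|\le \oint|\sigma_s|\,ds\le \mathcal{L}^{\frac12}E(t)^{\frac12}.
\]
By Proposition~\ref{Iso ine}, $\mathcal{L}\le 2\pi^{\frac23}A_0^{\frac13}$, and $E(t)\to0$ by Proposition~\ref{E-decay}. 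Hence $\sigma(\cdot,t)\to\pi^{-\frac23}A_0^{\frac23}$ uniformly.

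\textbf{Step 2: decay of all derivatives.} For each $m\ge1$ we interpolate, using the periodic interpolation inequality stated before Proposition~\ref{energy est.} applied to the zero-mean function $v=\sigma_s$:
\[
\oint\sigma_{s^m}^2\,ds\le c\,E^{1-\lambda}\Bigl(\oint\sigma_{s^{k}}^2\,ds\Bigr)^{\lambda},\qquad k>m,\ \lambda\in(0,1).
\]
The factor $\oint\sigma_{s^k}^2\,ds$ is uniformly bounded by Corollary~\ref{higherorder-sigma}. Since $E\to0$, we get $\oint\sigma_{s^m}^2\,ds\to0$ for every $m$. The Sobolev inequality on a circle of bounded length then gives $\sup|\sigma_{s^m}|\to0$ for every $m\ge1$. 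The length is bounded above by Proposition~\ref{Iso ine}. It is also bounded below, since $2A_0=\oint\sigma\,ds\le C\mathcal{L}$ by Corollary~\ref{bound-sigma}, so the Sobolev constant is uniform in $t$.

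\textbf{Step 3: interpreting ``smooth convergence''.} Convergence in the arc-length parameter $s$ is the natural sense here, since $\sigma$ is a centro-equiaffine invariant. If convergence with respect to a fixed parametrization is wanted instead, we transfer the estimates via $ds=r^{\frac23}d\theta$. For this, Step 2 together with \eqref{M-relation} gives $\mathcal{K}=\sigma^{-1}(1-\sigma_{ss})\to\pi^{\frac23}A_0^{-\frac23}$ smoothly in $s$.

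\textbf{Main obstacle.} The delicate point is the uniformity in $t$ of the interpolation and Sobolev constants, since the period $\mathcal{L}(t)$ varies. I would handle this by rescaling $s$ to a fixed period. This is harmless because $\mathcal{L}$ is pinched between two positive constants depending only on $\gamma_0$.
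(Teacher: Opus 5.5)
Your proof is correct. It reaches the result by a different route from the paper, and Step~3 is unnecessary.

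\textbf{Comparison with the paper.} The paper argues by compactness. Corollary~\ref{higherorder-sigma} and Sobolev give uniform bounds on all derivatives of $\sigma$, and Arzel\`a--Ascoli produces smoothly convergent subsequences. Proposition~\ref{E-decay} forces each subsequential limit to be constant. Lemma~\ref{inf-sup}, with area preservation, identifies that constant as $\pi^{-\frac23}A_0^{\frac23}$, and uniqueness of the limit gives convergence of the whole family. You use the same three inputs but argue quantitatively. The normalization point gives $\sup\bigl|\sigma-\pi^{-\frac23}A_0^{\frac23}\bigr|\le\mathcal{L}^{\frac12}E^{\frac12}$ directly. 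Interpolating between $E\to0$ and the uniform bounds on $\oint\sigma_{s^k}^2\,ds$ then gives decay of every derivative.

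What your version buys:
\begin{itemize}
\item Explicit control of every $C^m$ norm of $\sigma-\pi^{-\frac23}A_0^{\frac23}$ by a power of $E$. Any rate later proved for $E$, e.g.\ exponential decay, transfers at once to all derivatives.
\item It avoids a point the paper glosses over: $\sigma(\cdot,t)$ lives on circles of varying length $\mathcal{L}(t)$, so the Arzel\`a--Ascoli step really has to be done after rescaling to a fixed period.
\end{itemize}
The paper's version is shorter and needs no exponent bookkeeping.

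\textbf{Your ``main obstacle''.} This is milder than you suggest.
\begin{itemize}
\item The interpolation inequality, with the stated relation among exponents, is dilation invariant, so its constant does not depend on the period. For $p=q=r=2$ it even holds with constant $1$, by Parseval and H\"older.
\item For $m\ge1$, $\sigma_{s^m}$ is a derivative of a periodic function, so it vanishes somewhere. Hence $\sup|\sigma_{s^m}|\le\mathcal{L}^{\frac12}\bigl(\oint\sigma_{s^{m+1}}^2\,ds\bigr)^{\frac12}$, which needs only the upper bound on $\mathcal{L}$.
\end{itemize}

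\textbf{Step~3.} As written it is incomplete. Transferring to the $\theta$-parametrization would need two-sided bounds on $r$ and its derivatives for $\gamma(\cdot,t)$ itself. These are not available, because $\theta$ is not $\mathrm{SL}(2)$-invariant. The paper controls $r$ only for the normalized curves $L_t\gamma(\cdot,t)$ in Proposition~\ref{converge}. The proposition is meant in the arc-length sense, which your Steps~1--2 already establish, so Step~3 can simply be dropped.
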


\begin{proof}
	Corollary~\ref{higherorder-sigma} together with the Sobolev inequality shows that 
	$\sigma$ and all its spatial derivatives are uniformly bounded.  
	It then follows from the Arzelà--Ascoli theorem that, for any sequence $\{t_j\} \to +\infty$, 
	there exists a subsequence $\{t_{j_i}\}$ along which $\sigma(\cdot,t_{j_i})$ converges smoothly to some function $f$.  
	Proposition~\ref{E-decay} tells us that $f$ must be constant.
	Because the flow \eqref{main-flow1} preserves the enclosed area, 
	Lemma~\ref{inf-sup} identifies this constant as 
	$\pi^{-\frac{2}{3}} A_0^{\frac{2}{3}}$.
	Putting this together, we see that every subsequence has the same limit, 
	so $\sigma(\cdot,t)$ converges smoothly to 
	$\pi^{-\frac{2}{3}} A_0^{\frac{2}{3}}$ as $t \to +\infty$.
\end{proof}

Finally, we deduce the convergence of the evolving curves up to $\mathrm{SL}(2)$.
\begin{prop}\label{converge}
	Let $\gamma_0: S^1 \to \mathbb{R}^2$ be a smooth, strictly convex curve symmetric with respect to the origin. 
	Then the solution $\gamma(\cdot,t)$ of \eqref{main-flow1} converges smoothly, as $t \to +\infty$, to a round circle 
	with the same enclosed area as $\gamma_0$, modulo $\mathrm{SL}(2)$.
\end{prop}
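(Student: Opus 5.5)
The plan is to convert the smooth convergence of $\sigma$ from Proposition~\ref{sigma-converge} into convergence of the curves themselves, after normalizing by a time-dependent element of $\mathrm{SL}(2)$. Set $\sigma_\infty=\pi^{-\frac{2}{3}}A_0^{\frac{2}{3}}$. Since $\sigma_{ss}+\sigma\mathcal{K}=1$, $\sigma\to\sigma_\infty$ in $C^\infty$, and $\sigma$ is bounded below by Corollary~\ref{bound-sigma}, we get $\mathcal{K}=(1-\sigma_{ss})/\sigma\to\sigma_\infty^{-1}$ smoothly. The equi-affine length is $\mathcal{L}=\oint\sigma\mathcal{K}\,ds=\oint(1-\sigma_{ss})\,ds$, and also $\mathcal{L}=2A_0/\bar\sigma$, where $\bar\sigma$ is the mean of $\sigma$. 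Hence $\mathcal{L}\to 2A_0/\sigma_\infty=2\pi^{\frac{2}{3}}A_0^{\frac{1}{3}}$, which is the isoperimetric equality value in Proposition~\ref{Iso ine}. It follows that all derivatives of $\mathcal{K}$ in $s$ go to zero, uniformly on $[0,\mathcal{L}(t)]$.

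Next comes the $\mathrm{SL}(2)$ normalization. Parametrize $\gamma(\cdot,t)$ by equi-affine arc length $s\in[0,\mathcal{L}(t)]$, rescaled to a fixed interval. Choose $\phi_t\in\mathrm{SL}(2)$ such that $\tilde\gamma=\phi_t\gamma$ satisfies $\tilde\gamma_s(0)=c\,e_1$ and $\tilde\gamma_{ss}(0)=c^{-1}e_2$ with a fixed $c$. This is possible because $[\gamma_s,\gamma_{ss}]=1$. The quantities $\sigma$, $\mathcal{K}$ and $s$ are $\mathrm{SL}(2)$-invariant, so they are unchanged. By the Frenet system \eqref{structure}, $\tilde{\mathcal{T}}$ solves the linear ODE $\tilde{\mathcal{T}}_{ss}=-\mathcal{K}\tilde{\mathcal{T}}$ with normalized initial data. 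Continuous dependence on the coefficient $\mathcal{K}$ in $C^k$ then gives $\tilde{\mathcal{T}}\to\mathcal{T}_\infty$ in $C^{k+2}$, where $\mathcal{T}_\infty$ solves the same ODE with $\mathcal{K}\equiv\sigma_\infty^{-1}$. That solution traces an ellipse, and with $c$ chosen appropriately it is a round circle.

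Position is controlled by $\sigma$ itself, since the origin is fixed by symmetry (Proposition~\ref{symmetric}) and the group is linear, so there is no translation freedom. Using $\sigma=[\gamma,\mathcal{T}]$ and $[\gamma,\mathcal{N}]=\sigma_s$, and inverting the $2\times2$ system with determinant $[\mathcal{T},\mathcal{N}]=1$, we have
\[
\gamma=\sigma\,\mathcal{N}-\sigma_s\,\mathcal{T}.
\]
Hence $\tilde\gamma\to\sigma_\infty\mathcal{N}_\infty$ smoothly. This limit is a circle enclosing area $A_0$, because area is $\mathrm{SL}(2)$-invariant and preserved by Proposition~\ref{monotone}. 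Since $\sigma_\infty$ is fixed and the round normalization makes the circle's radius $\sqrt{A_0/\pi}$, the limiting circle is the same for every time. Therefore the convergence holds along the full flow and not merely along subsequences.

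The main obstacle is that the normalizing maps $\phi_t$ may be unbounded in $\mathrm{SL}(2)$ as $t\to\infty$, so the conclusion only holds modulo $\mathrm{SL}(2)$, exactly as the statement says. A second technical point is ensuring the ODE convergence is uniform, given that the parameter interval $[0,\mathcal{L}(t)]$ varies with $t$. I would handle this by rescaling to $[0,1]$ using $\mathcal{L}(t)\to\mathcal{L}_\infty$, and by using the uniform $C^k$ bounds from Corollary~\ref{higherorder-sigma} to get uniform Gronwall estimates for all derivatives.
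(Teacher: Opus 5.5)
Your argument is correct, and it takes a genuinely different route from the paper's.

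\textbf{The paper's route.} It normalizes by a length-minimizing $L_t\in\mathrm{SL}(2)$. It then invokes Andrews' Proposition 8, using origin symmetry and the fixed area, to bound the Euclidean support function $h$ of $L_t\gamma(\cdot,t)$. From $\sigma=h\,r^{1/3}$ and Corollary~\ref{bound-sigma} it bounds $r$ above and below, which yields $C^k$ bounds. It then extracts subsequential limits by Arzel\`a--Ascoli and identifies each limit as an ellipse of area $A_0$ via Proposition~\ref{sigma-converge}. Finally, the minimizing property of $L_t$ forces the limit to be a circle, which gives uniqueness of the limit and hence full convergence.

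\textbf{Your route.} You use the fundamental theorem of equi-affine curves instead. You fix the frame $(\mathcal{T},\mathcal{N})$ at a base point by $\phi_t$, which is possible since $[\mathcal{T},\mathcal{N}]=1$. Convergence of $\tilde{\mathcal{T}}$ then follows from continuous dependence of the linear ODE $\mathcal{T}_{ss}=-\mathcal{K}\mathcal{T}$ on its coefficient. The position is recovered algebraically from $\sigma$, $\sigma_s$ and the frame.

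\textbf{What each buys.} Your approach is self-contained. It needs no external support-function bound, no compactness or subsequence extraction, and no argument that a limit of length minimizers is again a minimizer in its $\mathrm{SL}(2)$-orbit. The paper uses that last fact implicitly. Your approach also gives convergence of parametrized curves, and any rate for $\sigma\to\sigma_\infty$ in $C^k$ would transfer directly to the curves. The paper's normalization has the advantage of being canonical, since it does not depend on a choice of base point. Given Andrews' result, its argument is also shorter.

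\textbf{A small slip.} Since $[\mathcal{N},\mathcal{T}]=-1$, inverting $[\gamma,\mathcal{T}]=\sigma$ and $[\gamma,\mathcal{N}]=\sigma_s$ gives $\gamma=\sigma_s\mathcal{T}-\sigma\mathcal{N}$, not $\sigma\mathcal{N}-\sigma_s\mathcal{T}$. You can check this on a circle of radius $R$, where $\sigma=R^{4/3}$ and $\gamma=-\sigma\mathcal{N}$. The limit is therefore $-\sigma_\infty\mathcal{N}_\infty$. With $c=\sigma_\infty^{1/4}$ this is still the circle of radius $\sigma_\infty^{3/4}=\sqrt{A_0/\pi}$, so nothing changes.

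This identity is purely algebraic and does not use symmetry. Origin symmetry enters only upstream, through the Blaschke--Santal\'o step behind Corollary~\ref{bound-sigma}. For the varying interval $[0,\mathcal{L}(t)]$, extending $\mathcal{K}(\cdot,t)$ periodically, or rescaling as you suggest, makes the Gronwall estimates uniform.
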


\begin{proof}
	Let $\{L_t\}_{t \ge 0} \subset \mathrm{SL}(2)$ be a family of special linear transformations chosen so that the Euclidean length of $L_t\gamma(\cdot,t)$ is minimized.  
	By Propositions~\ref{symmetric} and \ref{monotone}, the curve $L_t\gamma(\cdot,t)$ remains origin-symmetric and preserves the enclosed area throughout the flow \eqref{main-flow1}.  
	Proposition~8 of \cite{And99} then ensures that the support function $h$ of $L_t\gamma(\cdot,t)$ is bounded above and below by positive constants.  
	Since the affine support function is invariant under $\mathrm{SL}(2)$, \eqref{support-relation} together with Corollary~\ref{bound-sigma} implies that the radius of curvature $r$ is also bounded above and below.  
	Corollary~\ref{higherorder-sigma} and the Sobolev inequality further guarantee that all derivatives of $\sigma$ are uniformly bounded, so the curves $L_t\gamma(\cdot,t)$ are uniformly bounded in $C^k$ for every $k$.  
	It follows from the Arzelà–Ascoli theorem that for any sequence $t_i \to +\infty$, there exists a subsequence along which $L_{t_{i_k}}\gamma(\cdot,t_{i_k})$ converges smoothly to a limit curve $\gamma_\infty$.  
	By Proposition~\ref{sigma-converge}, this limit is an ellipse with area $A_0$, and the length-minimizing condition then forces $\gamma_\infty$ to be a circle with the same area.  
	Since every subsequence converges to this circle, we conclude that $L_t\gamma(\cdot,t)$ converges smoothly to it as $t \to +\infty$.
\end{proof}

\begin{proof}[Proof of Theorem~\ref{A}]
	The theorem follows by combining Propositions \ref{longtime} and \ref{converge}.
\end{proof}

\section*{Acknowledgements}
This work is supported by the National Natural Science Foundation of China (No. 12571062).

\end{document}